	\def\R{\mathbb R}
	\def\N{\mathbb{N}}   
	\def\R{\mathbb{R}}   
	\def\C{\mathbb{C}}   
	\def\DD{\mathcal{D}}
	\def\vjk{v^{j\to k}(t)}
	\DeclareMathOperator{\Diag}{Diag}
	\DeclareMathOperator{\conv}{conv}
	\DeclareMathOperator{\tr}{tr}
	\def\WASE{W(\mathbf{A_{S,E}})}
	\def\ASE{\mathbf{A_{S,E}}}
	\def\restrict#1{\raise-.5ex\hbox{\ensuremath{\big|}}_{#1}} 
	\newtheorem{theorem}{Theorem}
	\newtheorem{corollary}{Corollary}
	\newtheorem{definition}{Definition}
	\newtheorem{lemma}{Lemma}
	\newtheorem{proposition}{Proposition}
	\newtheorem{remark}{Remark}
	\numberwithin{equation}{section}
\begin{document} 
		
		\title[Minimal compact operators and joint numerical range]{Minimal self-adjoint compact operators, moment of a subspace and joint numerical range}
		
		\author{Tamara Bottazzi $^{1,2}$ and Alejandro Varela$^{3,4}$}

		\address{$^1$ Universidad Nacional de R\'io Negro. Centro Interdisciplinario de Telecomunicaciones, Electrónica, Computación y Ciencia Aplicada, Sede Andina (8400) S.C. de Bariloche, Argentina.}
		\address{$^2$ Consejo Nacional de Investigaciones Cient\'ificas y T\'ecnicas, (1425) Buenos Aires, Argentina.}
		\address{$^3$Instituto Argentino de Matem\'atica ``Alberto P. Calder\'on", Saavedra 15 3er. piso, (C1083ACA) Buenos Aires, Argentina}
		\address{$^4$Instituto de Ciencias, Universidad Nacional de Gral. Sarmiento, J. M. Gutierrez 1150, (B1613GSX) Los Polvorines, Argentina}
		\email{tbottazzi@unrn.edu.ar, avarela@campus.ungs.edu.ar }
		\thanks{Partially supported by Grants CONICET (PIP 0525), ANPCyT (PICT 2015-1505 and 2017-0019) and UNRN (PI 40-B-906)}
			\subjclass[2020]{Primary: 15A60, 47A12, 47B15.  Secondary: 47A05, 47A30, 51M15.}
		\keywords{moment of subspace, self-adjoint compact operators, minimality, joint numerical range}
		\maketitle

		\begin{abstract}
			We define the (convex) joint numerical range for an infinite family of compact operators in a Hilbert space $H$. We use this set to determine whether a self-adjoint compact operator $A$ with $\pm\|A\|$ in its spectrum is minimal respect to the set of diagonals in a fixed basis $E$ of $H$ in the operator norm, that is $\|A\|\leq \|A+D\|$, for all diagonal $D$. We also describe the moment set $m_S={\conv}\left\{|v|^2: v \in S \text{ and } \|v\|=1 \right\}$ of a subspace $S\subset H$ in terms of joint numerical ranges and obtain equivalences between the intersection of moments of two subspaces and of its two related joint numerical ranges. Moreover, we relate the condition of minimality of $A$ or the intersection of the moments of the eigenspaces of $\pm\|A\|$ to the intersection of the joint numerical ranges of two finite families of certain finite hermitian matrices. We also study geometric properties of the set $m_S$ such as extremal curves related with the basis $E$.  All these conditions are directly related with the description of minimal self-adjoint compact operators.
		\end{abstract}

\section{Introduction and preliminaries}

Given a Hilbert space $H$, we call $A\in B(H)$ a minimal operator if $\|A\|\leq\|A+D\|$, for all $D$ diagonal in a fixed orthonormal basis $E=\{e_i\}_{i\in I}$ of $H$ and $\|\cdot\|$ the operator norm. Note that in the case $A$ is a compact operator we can suppose that $H$ is separable since there is only a numerable set $\{e_{i_k}\}_{k\in \N}$ such that $A(e_{i_k})\neq 0$. As mentioned in the literature, these operators allow the concrete description of geodesics in homogeneous spaces obtained as orbits of unitaries under a natural Finsler metric (see  \cite{dmr1}).

 In \cite{bottazzi-varela-minimal-compacts} we studied minimal self-adjoint compact operators where it was stated that in general neither existence nor uniqueness of compact minimizing diagonals was granted. Some of these results were recently generalized to more general subalgebras of $K(H)$ and to C$^*$-algebras in \cite{Zhang-Jiang-2022, Zhang-Jiang-2023}.
%


Given a subspace $S\subset H$ we call the moment set of $S$ to
$$
m_S=\Diag\{Y\in \mathcal{B}_1(H): Y\geq 0, P_SY=Y, \tr(Y)=1\}
$$ 
for $\mathcal{B}_1(H)$ the ideal of trace class operators. Equivalently $m_S= \text{convex hull}\left\{|v|^2: v \in S \text{ and } \|v\|=1 \right\}$, where $|v|^2=(|v_1|^2,|v_2|^2,\dots)$ for $v=(v_1,v_2,\dots)$ the coordinates of $v$ in the $E$ basis  (see Proposition \ref{prop: equivalencias de momento}). These sets are fundamental in the detection and parametrization of minimal self-adjoint compact operators. More specifically, for $S\perp V$ finite dimensional subspaces, $R$ a self-adjoint operator such that $\text{ran}(R)\perp S, V$ and $\|R\|\leq 1$, follows that $m_S\cap m_V\neq \{0\}$ if and only if $A= P_S-P_V+R$ is a minimal compact self-adjoint operator. And every compact minimal self-adjoint operator $A$ of norm 1 can be described in this way (see \cite{bottazzi-varela-minimal-compacts}).

In finite dimensions, recent results obtained in \cite{kloboukvarela-mom-jnr} describe properties of minimal $n\times n$ hermitian matrices $M\in M_n^h(\C)$ relating them to certain joint numerical ranges. These are the (convex) joint numerical ranges of $\{E_iP_+E_i\}_{i=1}^n$ and $\{E_iP_-E_i\}_{i=1}^n$ where $P_+$ and $P_-$ are the orthogonal projectors onto the eigenspaces of $\|M\|$ and $-\|M\|$ and $E_i=e_i\otimes e_i$ are the rank one projections onto span$\{e_i\}$, for $e_i\in E$. Our interest in these relation is because there are many properties already studied to describe joint numerical ranges that can be applied to determine when the condition $m_S\cap m_V\neq \emptyset$ holds (see Theorem \ref{prop equivs mS cap mV no vacio}).

In the present work we will generalize the main results of \cite{kloboukvarela-mom-jnr} to the context of compact operators. For this purpose we need to define the (convex) joint numerical range of an infinite family of compact operators (see Definition \ref{def JNR}). 
In Section \ref{jnr en sucesiones} we present the joint numerical range $W(\mathbf{A})$, joint numerical radius  $w(\mathbf{A})$ and moment set $m_S$ in this context. We study some of their general properties and the close relation of $W(\mathbf{A})$, for the particular family $A=\{P_SE_iP_S\}_{i=1}^\infty$, with $m_S$ (see Proposition \ref{coro: W(ASE) en funcion de mS}). 

Section \ref{sec curves} includes the description of some extremal points and curves of $m_S$ related with the principal angles of $S$ with the canonical subspaces generated by the elements $e_i$ of the fixed basis $E$, $i\in \N$.

In Section \ref{secc mS y traza cero} we relate the real space of hermitian operators defined on a subspace $S\subset H$ of $\dim S=r<\infty$ with the  real subspace $M^h_r(\C)$ of hermitian $r\times r$ matrices. This is done using an explicit (real) isometric isomorphism $U$ between $M_r^h(\C)$ and $\mathcal{B}_S^h=P_S B^h(H)P_S$ (see Proposition \ref{props unitario U}) constructed by a Gell-Mann generalized basis.
Moreover, $U$ and its inverse preserve joint numerical ranges. These results allow the description of the condition of non empty intersection of moments of two finite dimensional orthogonal subspaces $S$, $V\subset H$ using a finite family of $\dim S\times \dim S$ and $\dim V\times \dim V$ matrices. This can be done verifying the equivalent condition of not null intersection of certain joint numerical ranges. These conditions allow the construction of all minimal compact operators.

As a consequence we can prove our main result that relates moments of orthogonal finite dimensional subspaces of $H$, joint numerical ranges, supports of a pair of  subspaces and minimal hermitian compact operators (see Theorem \ref{prop equivs mS cap mV no vacio} in Section \ref{secc minimalidad con finitas mat}).

\section{Joint numerical range for a sequence of operators in $\mathcal{K}(H)$}\label{jnr en sucesiones}
Let $\mathcal{K}(H)$ be the ideal of compact operators in a separable Hilbert space $H$, where we denote with $\langle\, ,\, \rangle$ its inner product and $\|\, \|$ the induced norm. In the algebra of bounded operators $B(H)$ we will also use $\|\, \|$ to indicate the operator norm.

The dual space of the compact operators  $\left(\mathcal{K}(H)\right)^*=\mathcal{B}_1(H)$ is the ideal of trace-class operators  $T\in\mathcal{K}(H)$ such that $\tr |T|<\infty$ (where $\tr$ denotes the usual trace).
   
\begin{definition}\label{def JNR}
Consider a sequence $\mathbf{A}=\{A_j\}_{j=1}^\infty\in \mathcal{K}(H)^\mathbb{N}$ of self-adjoint compact operators $A_j$ with bounded norm ($\|A_j\|\leq c$, for all $j$).
We define the joint numerical range of $\mathbf{A}$ by
	\begin{equation}
	\label{eq def JNR}
 W\left(\mathbf{A}\right)=\left\{\{\tr \left(\rho A_j\right)\}_{j=1}^\infty: \rho\in 
\mathcal{B}_1(H)	\wedge \tr(\rho)=1 \wedge \rho\geq 0\right\}.
\end{equation}
\end{definition}	
 Note that $|\tr(\rho A_j)| \leq \|A_j\|\, \tr(\rho)\leq c$ which implies $\{\tr \left(\rho A_j\right)\}_{j=1}^\infty\in \ell^\infty(\mathbb{R})$ and therefore $ W\left(\mathbf{A}\right)\subset\ell^\infty(\mathbb{R})$. By the linearity of the trace and the convexity of the set $\mathcal{D}=\{\rho\in 
 	\mathcal{B}_1(H)^h:\ \tr(\rho)=1 \wedge \rho\geq 0\}$ it is evident that $W(\mathbf{A})$ is a convex set.
 	
\begin{definition}\label{radius}
We also define for the family $\mathbf{A}=\{A_j\}_{j=1}^\infty\in \mathcal{K}(H)^\mathbb{N}$ with $\|A_j\|\leq c$, for all $j$, the $p$-joint numerical radius as
\begin{equation}\label{wp}
w_p(\mathbf{A})=\sup\left\lbrace \left( \sum_{j\in \N}|\tr(\rho A_j)|^p\right)^{1/p}:\ \rho\in \mathcal{B}_1(H)	\wedge \tr(\rho)=1 \wedge \rho\geq 0  \right\rbrace, \text{ for every } p\in[1,+\infty).
\end{equation}
\end{definition}
Clearly, $w_p(\mathbf{A})$ may be $+\infty$ depending on the family $\mathbf{A}$.

Given a subspace $S$ of $H$ we will consider the set of its density operators 
\begin{equation}
	\label{def DsubS}
\mathcal{D}_S=
\left\{Y\in \mathcal{B}_1(H):P_S Y=Y\geq 0\ ,\text{tr}(Y)=1\right\}
\end{equation}
(note that $P_S Y=YP_S=P_S YP_S$ for $Y\in\DD_S$). If $\dim S<\infty$ the affine hull of $\DD_S$ is also finite dimensional.

The next result is a generalization of Lemma 6.1 in \cite{kloboukvarela-mom-jnr}.
\begin{lemma}\label{lem rho} Let $S$ be a subspace of $H$ and $\mathcal{D}_S$ as in \eqref{def DsubS}, then
	$$
	\mathcal{D}_S=\{\rho\in \mathcal{B}_1(H):\ \rho\geq 0,\ \tr(\rho)=\tr(P_S\rho P_S)\}.
	$$
\end{lemma}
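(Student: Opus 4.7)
The plan is to show the nontrivial inclusion: if $\rho \in \mathcal{B}_1(H)$ satisfies $\rho \geq 0$ and $\tr(\rho) = \tr(P_S \rho P_S)$, then $P_S \rho = \rho$ (the converse is immediate by cyclicity of the trace, and the normalization $\tr(\rho)=1$ is common to both descriptions).

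The key step is a block decomposition relative to the orthogonal pair $P_S$, $P_S^\perp = I - P_S$:
$$
\rho = P_S \rho P_S + P_S \rho P_S^\perp + P_S^\perp \rho P_S + P_S^\perp \rho P_S^\perp.
$$
Taking traces, the cyclic property of the trace on $\mathcal{B}_1(H)$ together with $P_S P_S^\perp = 0$ kills the off-diagonal contributions, so
$$
\tr(\rho) = \tr(P_S \rho P_S) + \tr(P_S^\perp \rho P_S^\perp).
$$
Hence the hypothesis $\tr(\rho) = \tr(P_S \rho P_S)$ is equivalent to $\tr(P_S^\perp \rho P_S^\perp) = 0$.

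Next, I would exploit positivity. Since $\rho \geq 0$, its square root $\rho^{1/2}$ is well defined and self-adjoint, and
$$
P_S^\perp \rho P_S^\perp = (\rho^{1/2} P_S^\perp)^*(\rho^{1/2} P_S^\perp) \geq 0.
$$
A positive trace-class operator has zero trace iff it is zero, so the identity above forces $\rho^{1/2} P_S^\perp = 0$, hence $\rho P_S^\perp = 0$, i.e.\ $\rho = \rho P_S$. Taking adjoints (using $\rho = \rho^*$) yields $\rho = P_S \rho$, and therefore $\rho = P_S \rho P_S$, which is the required membership in $\mathcal{D}_S$.

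I do not anticipate any serious obstacle here; the only point requiring minor care is the justification of the trace manipulations, but cyclicity and the fact that a positive trace-class operator with zero trace vanishes are standard tools in $\mathcal{B}_1(H)$. The argument is a direct infinite-dimensional transcription of the finite-dimensional Lemma 6.1 of \cite{kloboukvarela-mom-jnr}, replacing the finite matrix trace arguments with the corresponding properties of trace-class operators.
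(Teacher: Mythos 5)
Your proof is correct and follows essentially the same route the paper intends: the easy inclusion via cyclicity of the trace, and the reverse inclusion via the block decomposition with respect to $P_S$, $I-P_S$ and the fact that a positive trace-class operator of zero trace vanishes, which is exactly the argument of the finite-dimensional Lemma 6.1 of \cite{kloboukvarela-mom-jnr} that the paper defers to. In fact your write-up supplies the details (factoring $P_S^\perp\rho P_S^\perp=(\rho^{1/2}P_S^\perp)^*(\rho^{1/2}P_S^\perp)$ and concluding $\rho P_S^\perp=0$) that the paper's proof only cites.
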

\begin{proof}
	Let $Y\in \mathcal{B}_1(H)$ be such that $P_S Y=Y\geq 0$. Then, $\tr(P_SYP_S)=\tr(YP_S)=\tr(Y)=1$, which implies that 
	$Y\in \{\rho\in \mathcal{B}_1(H) :\ \rho\geq 0,\ \tr(\rho)=\tr(P_S\rho P_S)\}$. The reverse inclusion follows the same ideas in \cite[Lemma 6.1]{kloboukvarela-mom-jnr}.
\end{proof}

Now, motivated by the finite dimensional case of the \textsl{moment of a subspace} $S$ studied in \cite{kloboukvarela-mom-jnr} and \cite{soportes}, we define 
\begin{equation}
\begin{split}
\label{def momento}
m_S&=\Diag(\mathcal{D}_S)\\
&=\{\text{Diag}(Y): Y\in \mathcal{D}_S\}\ \subset  \left\{x\in \ell^1(\R): x_j\geq 0 \text{ and } \sum_{j=1}^\infty x_j=1\right\}
\end{split}
\end{equation}
where $\Diag(K)$ indicates the diagonal compact operator with the same diagonal than $K\in\mathcal{K}(H)$ with respect to a standard (fixed) basis $E=\{e_i\}_{i=1}^\infty$ of $H$.  We will also identify the diagonal matrices of $\text{Diag}(\mathcal{D}_S)$ with the corresponding sequences in $\ell^1(\R)$.

\begin{remark}
	\label{rem: sobre props mS}
In infinite dimensions the set $m_S$ was used in the proof of $(3)\Rightarrow (2)$ of \cite[Theorem 7]{bottazzi-varela-minimal-compacts}, where $S$ is the eigenspace of $\|A\|$ or $-\|A\|$ for $A$ a minimal self-adjoint compact operator (that is $\|A\|=\text{dist}(A,\text{Diag}\left( \mathcal{K}(H)\right)$).  
In this case dim$\left(\text{Ran}(P_S)\right)<\infty$, and then every $Y\in \mathcal{D}_S$ can be considered a self-adjoint operator between finite fixed dimensional spaces. Then, all norms restricted to those spaces are equivalent and $m_S=\text{Diag}(\mathcal{D}_S)$ is a compact and convex set for every norm.

Moreover, if $\{-\|A\|,\|A\|\}\subset \sigma(A)$, the non empty intersection between the corresponding moments related to the eigenspaces of $\|A\|$ and $-\|A\|$ implies that such a compact hermitian operator $A$ is minimal (see \cite[Corollary 10]{bottazzi-varela-minimal-compacts} and Proposition \ref{prop: equivalencias de momento}). 

\end{remark}

For $E=\{e_j\}_{j=1}^{\infty}$ we will denote with  $e_j\otimes e_j=E_j$, the rank-one orthogonal projections onto the subspaces generated by $e_j\in E$, for all $j\in\mathbb{N}$.
We will be particularly interested in the study of $W(\mathbf{A})$ in the case of 
$\mathbf{A}=\mathbf{A_{S,E}}=\{  P_SE_j P_S \}_{j=1}^\infty$ and $S$ a finite dimensional subspace of $H$ 
\begin{equation}\label{def W AsubS,E}
W\left(\mathbf{A_{S,E}}\right)=\left\{\{\tr \left(P_SE_j P_S\rho\right)\}_{j=1}^\infty: \rho\in \mathcal{B}_1(H) ,\ \rho\geq 0 \text{ and } \tr(\rho)=1\right\}.
\end{equation}
Observe that in this context
$$
\tr(P_SE_j P_S \rho)=\tr(E_j P_S \rho P_S E_j)=\langle P_S \rho P_S e_j,e_j\rangle=\left(P_S \rho P_S\right)_{j j}
$$
is the $j,j$ diagonal $E$-coordinate of the positive semi-definite trace-class operator $P_S \rho P_S$. Therefore
\begin{equation}\label{eq: serie de elem de W menor o igual que 1}
\sum_{j=1}^{\infty}\tr(E_j P_S\rho P_S)=\sum_{j=1}^{\infty}(P_S\rho P_S)_{j,j}=\tr(P_S\rho P_S)\leq \|P_S\| \tr(\rho)= 1
\end{equation}
which proves, in this case, that the sequences $\left\{\tr \left(P_SE_j P_S \rho\right)\right\}_{j=1}^\infty\in \ell^1\left(\mathbb{R}\right)\cap \mathbb{R}_{\geq 0}^\mathbb{N}$ and hence
\begin{equation}\label{eq W(AsubSE) esta en l1}
W\left(\mathbf{A_{S,E}}\right)\subset \ell^1\left(\R\right)\cap \mathbb{R}_{\geq 0}^\mathbb{N}.
\end{equation}

\begin{remark} \label{wp prop}
For the family $\mathbf{A_{S,E}}$, 
\begin{enumerate}
	\item the $p$-joint numerical radius $$w_p\left(\mathbf{A_{S,E}}\right)=\sup\left\lbrace \left( \sum_{j\in \N}\left( \tr(P_SE_jP_S\rho )\right) ^p\right)^{1/p}:\ \rho\in \mathcal{B}_1(H)	\wedge \tr(\rho)=1 \wedge \rho\geq 0  \right\rbrace$$
	is finite for every $p\in[1,\infty)$. This is a consequence of \eqref{eq W(AsubSE) esta en l1}, since every sequence $\{\tr(P_SE_jP_S\rho )\}_{j\in \N}\in \ell^1(\R)$.
	\item Moreover, $w_p\left(\mathbf{A_{S,E}}\right)\leq 1$ for every $p\in[1,\infty)$, since
	\begin{eqnarray*}
	\sum_{j\in \N}\left( \tr(P_SE_jP_S\rho )\right) ^p&=&\sum_{j\in \N}\left( P_S\rho P_S\right)_{jj} ^p=\|\text{Diag}(P_S\rho P_S)\|_p^p\\
	&\leq&\|P_S\rho P_S\|_p^p\leq\|P_S\|^p\|\rho P_S\|_p^p \leq\|P_S\|^{2p}\|\rho\|_p^p\leq \|\rho\|_1^p=1,
	\end{eqnarray*}
where the first inequality is due to the pinching property for Schatten $p$-norms (Theorem 1.19 in \cite{BarrySimon}).
	\item By \eqref{eq: serie de elem de W menor o igual que 1} and Lemma \ref{lem rho}, it can be deduced that
	$$w_1\left(\mathbf{A_{S,E}}\right)= 1.$$
\end{enumerate}
\end{remark}

Note that \eqref{eq: serie de elem de W menor o igual que 1}, \eqref{eq W(AsubSE) esta en l1} and Remark \ref{wp prop} hold for $S$ with $\dim(S)=\infty$.

The next result is a generalization from the finite dimensional case studied in Lemma 6.2 and Theorem 6.3 of \cite{kloboukvarela-mom-jnr}.

\begin{proposition}
	\label{prop: equivalencias de momento}
	The following are equivalent definitions of $m_S$, the moment of $S$ with $\dim S=r$, $r<\infty$, related to a basis $E=\{e_i\}_{i=1}^\infty$ of $H$. Note the identification made between diagonal operators and sequences.
	\begin{enumerate}
		\item[a) ] \label{defequiv mS 1 en prop} $m_S=\text{Diag}(\mathcal{D}_S)$.
		\item[b) ] \label{defequiv mS 2 en prop}
		$
		m_S= {\conv}\left\{|v|^2: v \in S \text{ and } \|v\|=1 \right\}.
		$
		\item[c) ] \label{defequiv mS 3 en prop} $m_S= \bigcup\limits_{\{s^i \}_{i=1}^r  \text{o.n. set in } S} \  {\conv}  \{|s^i|^2 \}_{i=1}^r.
		$
		\item[d) ] \label{eq relac momento y trEiY} 
		$m_S=\{\left(\tr(E_1 Y),\dots,\tr(E_n Y),\dots\right)\in\ell^1(\R): Y \in \DD_S \}$.
		\item[e) ] \label{eq relac momento y JNR} 
		$m_S=W(P_S E_1 P_S,\dots,P_S E_n P_S,\dots)\cap \left\{x\in \ell^1(\R):x_i\geq 0 \text{ and } \sum_{i=1}^\infty x_i=1\right\}$, 
		where $P_S$ is the orthogonal projection onto $S$, and $W$ is the joint numerical range from Definition 
		\ref{def JNR}.
	\end{enumerate}
\end{proposition}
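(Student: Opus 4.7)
The plan is to close the equivalences in the cycle (a) $\Leftrightarrow$ (d) $\Leftrightarrow$ (e) and (a) $\Leftrightarrow$ (b) $\Leftrightarrow$ (c), relying on only two tools: the spectral theorem applied to $Y\in\mathcal{D}_S$ (which is essentially finite-dimensional because $P_SYP_S=Y$ and $\dim S=r<\infty$) and the previously proved Lemma \ref{lem rho}.

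First I would observe that (a) $\Leftrightarrow$ (d) is essentially a tautology: the $j$-th diagonal entry of $Y$ in the basis $E$ is $\langle Ye_j,e_j\rangle=\tr(E_jY)$, so the identification of $\Diag(Y)$ with its coordinate sequence already produces $(\tr(E_jY))_{j=1}^\infty$. For (d) $\Leftrightarrow$ (e), the key computation is $\tr(P_SE_jP_S\rho)=\tr(E_jP_S\rho P_S)=(P_S\rho P_S)_{jj}$, which already appears in the derivation of \eqref{eq: serie de elem de W menor o igual que 1}. Any $\rho\in\mathcal{D}_S$ satisfies $P_S\rho P_S=\rho$, so (d) $\subseteq$ (e). Conversely, if $x\in W(\mathbf{A_{S,E}})$ lies in the simplex, then for the associated density operator $\rho$ we have $\sum_j x_j=\sum_j(P_S\rho P_S)_{jj}=\tr(P_S\rho P_S)=1=\tr(\rho)$, and Lemma \ref{lem rho} upgrades $\rho$ to an element of $\mathcal{D}_S$, which gives the reverse inclusion via (d).

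Next, (a), (b) and (c) are tied together by the spectral decomposition of $Y\in\mathcal{D}_S$. Since $Y$ is positive, self-adjoint and acts non-trivially only on the $r$-dimensional subspace $S$, one writes $Y=\sum_{i=1}^r\lambda_i\,|s^i\rangle\langle s^i|$ with $\{s^i\}_{i=1}^r$ orthonormal in $S$ (padding with zero eigenvalues if $\rank(Y)<r$), $\lambda_i\geq 0$ and $\sum_i\lambda_i=\tr(Y)=1$; taking diagonals yields $\Diag(Y)=\sum_i\lambda_i|s^i|^2$, placing $\Diag(Y)$ inside the set of (c) and therefore inside the set of (b). The inclusion (c) $\subseteq$ (b) is immediate, and (b) $\subseteq$ (a) follows because each $|v\rangle\langle v|$ with $v\in S$ of unit norm belongs to $\mathcal{D}_S$ and $\mathcal{D}_S$ is convex, so any finite convex combination $\sum_k\mu_k|v^k\rangle\langle v^k|$ lies in $\mathcal{D}_S$ and has diagonal $\sum_k\mu_k|v^k|^2$.

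The only mildly delicate point is (d) $\Leftrightarrow$ (e), where the normalization $\sum_jx_j=1$ must be used to convert an arbitrary density $\rho$ participating in $W(\mathbf{A_{S,E}})$ into a density genuinely supported on $S$; this is precisely the role of Lemma \ref{lem rho}, and without the simplex condition the two sets would differ. All remaining steps reduce either to the finite-dimensional spectral theorem or to the definition of the trace and of $\Diag$, so no convergence issues arise despite the ambient Hilbert space being infinite-dimensional.
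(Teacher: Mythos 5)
Your proof is correct and follows essentially the same route as the paper: the tautological identification for a)~$\Leftrightarrow$~d), the computation $\tr(P_SE_jP_S\rho)=(P_S\rho P_S)_{jj}$ together with Lemma \ref{lem rho} for e), and the spectral decomposition of $Y\in\mathcal{D}_S$ to close the cycle a)~$\subseteq$~c)~$\subseteq$~b)~$\subseteq$~a). The only (harmless) variation is that you apply Lemma \ref{lem rho} directly to $\rho$ rather than to $Y=P_S\rho P_S$ as the paper does; both are valid.
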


\begin{proof}
Statement a) is Definition \eqref{def momento}.
Next we will consider some inclusions regarding the sets described in a), b) and c) to prove the equalities stated in those items. First observe that if $s\in S$ with $\|s\|=1$ then $Y=s\otimes s\in \DD_S$ because $\tr(s\otimes s)=\sum_{i=1}^\infty |s_i|^2=1$, $s\otimes s\geq 0$ and $P_S(s\otimes s)=s\otimes s$. Hence, since $\Diag(s\otimes s)=|s|^2$ and $m_S$ is convex, follows that ${\conv}\left\{|v|^2: v \in S \text{ and } \|v\|=1 \right\}\subset m_S=\Diag(\DD_S)$.
\\
Now if $\{s^i \}_{i=1}^r$  is an orthonormal set in $S$ then it is apparent that 
$$
{\conv}\{|s^i|^2 \}_{i=1}^r\subset{\conv}\left\{|v|^2: v \in S \text{ and } \|v\|=1 \right\}.
$$
This implies $\bigcup\limits_{\{s^i \}_{i=1}^r  \text{o.n. set in } S} \  {\conv}  \{|s^i|^2 \}_{i=1}^r
\subset{\conv}\left\{|v|^2: v \in S \text{ and } \|v\|=1 \right\}.
$
\\
Now take $Y\in m_S=\Diag(\DD_S)$. There exist an orthonormal basis $\{y_i\}_{i=1}^r$ of $S$ such that $Y=\sum_{i=1}^r \lambda_i (y_i\otimes y_i)$ with $\lambda_i\geq 0$ and $\sum_{i=1}^r \lambda_i=1$. Then $\Diag(Y)=\sum_{i=1}^r \lambda_i \Diag (y_i\otimes y_i)\simeq \sum_{i=1}^r \lambda_i |y_i|^2$ which is a convex combination of $\{|y_i|^2\}_{i=1}^r$ for the orthonormal set $\{y_i\}_{i=1}^r\subset S$. Then $\Diag(Y)\in \bigcup\limits_{\{s^i \}_{i=1}^r  \text{o.n. set in } S} \  {\conv}  \{|s^i|^2 \}_{i=1}^r$.
This proves that the sets described in the first three items are the same (using the identification of sequences with diagonal matrices in some cases).

	Now to prove statement d), take any $x=\text{Diag}(Y)\in m_S$ with $Y\in \mathcal{D}_S$ and $Y_{j,j}=\tr(E_jP_SYP_SE_j)=\tr(E_jYE_j)=\tr(E_jY)$ for every $j\in\mathbb{N}$. 

	In order to prove e) consider that using d) every $x\in m_S$ can be written as $x=\{\tr \left( E_j YE_j\right)\}_{j=1}^\infty\in \ell^1\left( \R \right)$, with $Y\in \mathcal{D}_S$. Then $x\in W\left(\mathbf{A_{S,E}}\right)$ and 
	$$
	\sum_{j=1}^{\infty}x_j=\sum_{j=1}^{\infty}\tr \left( E_j YE_j\right)=\tr(Y)=1.
	$$
	On the other hand, take $x\in W\left(\mathbf{A_{S,E}}\right)\cap \left\{x\in \ell^1(\R):x_i\geq 0 \text{ and } \sum_{i=1}^\infty x_i=1\right\}$, then there exists $\rho_0\in \mathcal{B}_1(H)$, $\rho_0\geq 0$, $\tr(\rho_0)=1$ such that
	$$x=\{\tr \left(P_S E_j P_S \rho_0\right)\}_{j=1}^\infty,\ \sum_{j=1}^{\infty}\tr \left( P_SE_j P_S \rho_0\right)=1.$$
	Therefore, $Y=P_S\rho_0P_S$ fulfills that $Y\geq 0$ and 
	$$1=\sum_{j=1}^{\infty}\tr \left(P_S E_j P_S \rho_0\right)=\sum_{j=1}^{\infty}\tr \left(E_j P_S \rho_0P_SE_j\right)=\sum_{j=1}^{\infty}\left( P_S \rho_0P_S\right)_{jj}=\sum_{j=1}^{\infty}Y_{jj}=\sum_{j=1}^{\infty}\left( P_S YP_S\right)_{jj}.$$
	Then, $Y\in \mathcal{D}_S$ and $x\in m_S$ by Lemma \ref{lem rho}.	
\end{proof}

In the same context, we can define the classic joint numerical range
\begin{definition}
	Consider a sequence $\mathbf{A}=\{A_j\}_{j=1}^\infty\in \mathcal{K}(H)^\mathbb{N}$ of self-adjoint hermitian compact operators $A_j$ with bounded norm ($\|A_j\|\leq c$, for all $j$).
	We define the classic joint numerical range of $\mathbf{A}$ by
	\begin{equation}
	\label{eq def JNRClass}
	W_{class}\left(\mathbf{A}\right)=\left\{\{\left\langle A_jx,x\right\rangle \}_{j=1}^\infty: x\in H	,\ \|x\|=1\right\}.
	\end{equation}
\end{definition}
Note that $|\left\langle A_jx,x\right\rangle | \leq \|A_jx\|\leq \|A_j\|\leq c$ which implies $\{\left\langle A_j x,x\right\rangle\}_{j=1}^{\infty}\in \ell^\infty(\mathbb{R})$ and therefore $ W_{class}\left(\mathbf{A}\right)\subset\ell^\infty(\mathbb{R})$.

In the particular case when $\mathbf{A}= \mathbf{A_{S,E}}$ then $W_{class}\left(\mathbf{A}_{S,E}\right)\subset\ell^1(\mathbb{R})$. This follows because $\rho_x=x\otimes x\in \mathcal{D} $ and $\tr (P_S E_i P_S\rho_x)=|(P_Sx)_{i,i}|^2=|\left\langle P_Sx,e_j\right\rangle|^2$, which implies that $\sum_{i=1}^\infty |(P_Sx)_{i,i}|^2= \|P_Sx\|^2\leq 1$ and
$$W_{class}\left(\mathbf{A_{S,E}}\right)=\left\{\{|\left\langle P_Sx,e_j\right\rangle|^2\}_{j=1}^\infty: x\in H	,\ \|x\|=1\right\}=\left\{|P_Sx|^2: x\in H	,\ \|x\|=1\right\}.$$

\begin{definition}\label{classradius}
	By extension, we define for $\mathbf{A}=\{A_j\}_{j=1}^\infty\in \mathcal{K}(H)^\mathbb{N}$ with $\|A_j\|\leq c$, for all $j$, the classic $p$-joint numerical radius as
	\begin{equation}\label{wpclass}
	w_{class,p}(\mathbf{A})=\sup\left\lbrace \left( \sum_{j\in \N}|\left\langle A_jx,x \right\rangle |^p\right)^{1/p}: x\in H, \|x\|=1 \right\rbrace,\text{ for } 1\leq p\le \infty
	\end{equation}
\end{definition}
And, as it occurs with $w_p(\mathbf{A})$, $w_{class,p}(\mathbf{A})$ may be $\infty$, and it depends on the family $\mathbf{A}$. Indeed, observe that if we consider a fixed unitary $x\in H$ and define $\bar{x}$ such that $\bar{x}_j=\left\langle A_jx,x \right\rangle$, for $j\in\N$, then
$$\|\bar{x}\|_p=\left( \sum_{j\in \N}|\left\langle A_jx,x \right\rangle |^p\right)^{1/p}\leq \|\bar{x}\|_1,$$
for every $p\geq 1$ since $\bar{x}\in W_{class}(\mathbf{A_{S,E}})\subset \ell^1(\mathbb{R})$.	
Therefore, $w_{class,p}(\mathbf{A_{S,E}})$ is a finite number for every $p\geq 1$.

\begin{remark}
Observe that $W_{class}$ is not a convex set even for a finite family $\mathbf{A}$ of cardinal greater than one (there are several examples in the literature, such as in 	\cite{gutkin-jonckheere-karow}, \cite{li-poon} and \cite{Mu-Tom}).
\end{remark}

\begin{proposition}
	If $\dim S<\infty$ and $W_\text{class}\left(
	\ASE\right)$ is convex then 
	\begin{equation}\label{eq WclassASE convexo igual WASE}
	W_\text{class}\left(\ASE \right)=W\left(\ASE \right).
	\end{equation}
\end{proposition}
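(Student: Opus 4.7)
The plan is to prove both inclusions of \eqref{eq WclassASE convexo igual WASE}. The inclusion $W_\text{class}(\ASE)\subset W(\ASE)$ holds without any hypothesis: for a unit vector $x\in H$, the rank-one operator $\rho_x:=x\otimes x$ is positive, trace class, with $\tr\rho_x=1$, and satisfies $\tr(P_SE_jP_S\rho_x)=\langle P_SE_jP_Sx,x\rangle$ for every $j$, so the tuple produced by $x$ in the classic definition is realized by $\rho_x$ in the convex one.

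The substance is in $W(\ASE)\subset W_\text{class}(\ASE)$, where both hypotheses will be used. First I would record the description
$$W_\text{class}(\ASE)=\{|v|^2:v\in S,\ \|v\|\leq 1\},$$
where one direction is clear from $v=P_Sx$ with $\|v\|\leq\|x\|=1$, and the other from completing any $v\in S$ with $\|v\|\leq 1$ to a unit $x=v+\sqrt{1-\|v\|^2}\,w\in H$ (with $w\in S^\perp$ unit), which is possible since $\dim S<\dim H$. Second, take $\rho\in\mathcal{B}_1(H)$ positive of trace one; because $\dim S=r<\infty$, the compression $P_S\rho P_S$ has rank $\leq r$, hence admits a finite spectral decomposition
$$P_S\rho P_S=\sum_{k=1}^{r}\mu_k\,v_k\otimes v_k,\qquad \mu_k\geq 0,\ \{v_k\}_{k=1}^{r}\text{ orthonormal in }S.$$
Reading off diagonals in the basis $E$, the sequence in $W(\ASE)$ coming from $\rho$ equals $\sum_{k=1}^{r}\mu_k|v_k|^2$, with each $|v_k|^2\in W_\text{class}(\ASE)$ (as $\|v_k\|=1$) and $c:=\sum_k\mu_k=\tr(P_S\rho P_S)\leq 1$.

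Having reduced to a \emph{finite} combination, I would finish by convexity. If $c=0$ the sequence is $0$, which lies in $W_\text{class}(\ASE)$. If $c>0$, the convexity hypothesis applied to the $r$ unit-norm points $|v_k|^2$ with weights $\mu_k/c$ yields $\sum_k(\mu_k/c)|v_k|^2=|u|^2$ for some $u\in S$ with $\|u\|\leq 1$; summing coordinates forces $\|u\|^2=1$, so $u$ is a unit vector of $S$, and the original sequence equals $c|u|^2=|\sqrt{c}\,u|^2\in W_\text{class}(\ASE)$ since $\sqrt{c}\,u\in S$ has norm $\sqrt{c}\leq 1$. The main (mild) obstacle — converting a nonnegative combination with total mass $c<1$ into a point of $W_\text{class}(\ASE)$ — is resolved by the rescaling $u\mapsto\sqrt{c}\,u$, which works precisely because the ``unit-ball'' description of $W_\text{class}(\ASE)$ is closed under contractive scalings; the finite dimensionality of $S$ is essential in making the spectral sum finite and thereby accessible to plain convexity.
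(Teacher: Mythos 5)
Your proof is correct, and it reaches the conclusion by a route that is genuinely more self-contained than the paper's. The paper argues top-down from its own machinery: it notes $\{|s|^2: s\in S,\ \|s\|=1\}\subset W_\text{class}(\ASE)$, invokes item b) of Proposition \ref{prop: equivalencias de momento} plus the convexity hypothesis to get $m_S\subset W_\text{class}(\ASE)$, then uses the point $(0,0,\dots)$ (coming from a unit $x\in S^\perp$) and convexity again to capture the whole truncated cone $\{t\,x: 0\le t\le 1,\ x\in m_S\}$, which Proposition \ref{coro: W(ASE) en funcion de mS} identifies with $W(\ASE)$. You instead re-derive the needed structure from scratch: the finite spectral decomposition of $P_S\rho P_S$ recovers, in effect, the first equality of Proposition \ref{coro: W(ASE) en funcion de mS}, and your explicit description $W_\text{class}(\ASE)=\{|v|^2: v\in S,\ \|v\|\le 1\}$ (valid because $S^\perp\neq\{0\}$, as $\dim S<\infty=\dim H$) lets you absorb the deficient total mass $c=\tr(P_S\rho P_S)\le 1$ by the rescaling $u\mapsto\sqrt{c}\,u$ rather than by taking a convex combination with $0$; the coordinate-sum argument forcing $\|u\|=1$ is a nice touch that the paper does not need to make explicit. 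What the paper's version buys is brevity and reuse of already-proved propositions; what yours buys is independence from those propositions and a slightly sharper picture of $W_\text{class}(\ASE)$ as the set of $|v|^2$ over the unit ball of $S$, with convexity used only once and only where it is genuinely needed. Both proofs use the standing assumption that the ambient basis $E$ is infinite (so that $S^\perp$ contains a unit vector), which the paper also relies on in \eqref{eq: 0 pertenece a WASE}.
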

\begin{proof}
	Recall that
	$W_\text{class}\left(\ASE\right)=\{(\tr (P_S E_1 P_S(x\otimes x ),
	\dots,P_S E_n P_S(x\otimes x ),\dots):x\in H, \|x\|=1\}=
	\{( \langle P_S E_1 P_S x,x \rangle ,
	\dots,\langle P_S E_n P_S x,x \rangle, 
	\dots):x\in H, \|x\|=1\}$.
	Then since 
	
	$|s|^2= (\langle P_S E_1 P_S s,s \rangle ,	\dots,\langle P_S E_n P_S s,s \rangle, \dots)$
	holds that  
	$\{|s|^2 : s\in S, \|s\|=1\} \subset W_\text{class}(\ASE)$. 
	Now item b) of Proposition \ref{prop: equivalencias de momento} and the assumed convexity of $W_\text{class}\left(\ASE\right)$ imply that
	$$
	m_S=\conv\{|s|^2 : s\in S, \|s\|=1\}\subset W_\text{class}\left(\ASE\right)
	$$
	The same arguments used to prove \eqref{eq: 0 pertenece a WASE} give that $(0,\dots,0,\dots)\in W_\text{class}\left(\ASE\right)$ and hence the convexity of $W_\text{class}\left(\ASE\right)$ imply that
	$$
	\{t\, x: 0\leq t\leq 1 \text{ and } x\in m_S\}	\subset W_\text{class}\left(\ASE\right)
	$$
	Corollary \ref{coro: W(ASE) en funcion de mS} and the fact that the inclusion 
	$W_\text{class}\left(\ASE \right)
	\subset W\left(\ASE \right)$
	always holds proves equality \eqref{eq WclassASE convexo igual WASE}.
\end{proof}

\begin{proposition}\label{coro: W(ASE) en funcion de mS}
			Following the notations of $\DD_S$ from \eqref{def DsubS}, $W$ of \eqref{eq def JNR} from Definition \ref{def JNR} and $W\left(\mathbf{A_{S,E}}\right)$ from \eqref{def W AsubS,E}, the following equality holds
		\begin{equation}
			W\left(\mathbf{A_{S,E}}\right)
			=
			\{t\, x: 0\leq t\leq 1 \text{ and } x\in m_S\}	=\bigcup_{t\in [0,1]}\left\lbrace t\left(\tr(\mu P_SE_1P_S), \tr(\mu P_SE_2P_S),...  \right): \mu\in \mathcal{D}_S \right\rbrace \label{conos}
		\end{equation}
		and hence
		$$\text{cone} \left(W\left(\mathbf{A_{S,E}}\right)\right)=
		\text{cone}\left( m_S\right).$$
\end{proposition}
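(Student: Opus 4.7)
The plan is to prove the first equality by establishing two inclusions, deduce the second equality as a restatement, and obtain the cone equality as an easy consequence.

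For the inclusion $W(\mathbf{A_{S,E}}) \subset \{tx : 0 \leq t \leq 1,\ x \in m_S\}$, I would take an arbitrary element $\{\tr(P_S E_j P_S \rho)\}_{j=1}^\infty \in W(\mathbf{A_{S,E}})$ and, using the identity $\tr(P_S E_j P_S \rho) = (P_S \rho P_S)_{jj}$ already noted in the excerpt, set $Y = P_S \rho P_S$ and $t = \tr(Y)$. Then $Y \geq 0$, $Y \in \mathcal{B}_1(H)$, $P_S Y = Y$, and $0 \leq t \leq \tr(\rho) = 1$ by \eqref{eq: serie de elem de W menor o igual que 1}. The case $t = 0$ reduces to the zero sequence (equal to $0 \cdot x$ for any $x \in m_S$), and for $t > 0$ the rescaling $\mu = Y/t$ lies in $\mathcal{D}_S$, making the sequence equal to $t \cdot \text{Diag}(\mu) \in t \cdot m_S$ by item a) of Proposition \ref{prop: equivalencias de momento}.

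For the reverse inclusion, I would fix $t \in [0,1]$ and $x \in m_S$, write $x = \text{Diag}(Y)$ with $Y \in \mathcal{D}_S$, and construct $\rho$ as a convex combination $\rho = tY + (1-t)\rho_0$, where $\rho_0 = e \otimes e$ for some unit vector $e \in S^\perp$. This $\rho_0$ exists because $\dim S < \infty$ in an infinite-dimensional separable $H$, and satisfies $P_S \rho_0 = 0$, $\rho_0 \geq 0$, $\tr(\rho_0) = 1$. Then $\rho \in \mathcal{B}_1(H)$ is positive with $\tr(\rho) = 1$, and a short computation using $P_S Y = Y$ and $P_S \rho_0 = 0$ yields $\tr(P_S E_j P_S \rho) = t\, Y_{jj} = t\, x_j$ for every $j$, so $tx \in W(\mathbf{A_{S,E}})$.

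The second equality in \eqref{conos} is then a direct rewriting of the first: for $\mu \in \mathcal{D}_S$ one has $\tr(\mu P_S E_j P_S) = \mu_{jj} = \text{Diag}(\mu)_j$, and $m_S = \{\text{Diag}(\mu) : \mu \in \mathcal{D}_S\}$ by definition. The cone identity follows since $m_S \subset W(\mathbf{A_{S,E}}) \subset \{tx : t \geq 0,\ x \in m_S\} = \text{cone}(m_S)$, which implies both $\text{cone}(m_S) \subset \text{cone}(W(\mathbf{A_{S,E}}))$ and the reverse inclusion. The only potential subtlety is the construction of the auxiliary $\rho_0$ supported in $S^\perp$ in the reverse inclusion, which is where the standing assumption $\dim S < \infty$ is genuinely used.
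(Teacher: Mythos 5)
Your proof is correct and follows essentially the same route as the paper: both directions rest on the identity $\tr(P_SE_jP_S\rho)=(P_S\rho P_S)_{jj}$, the normalization $t=\tr(P_S\rho P_S)\in[0,1]$, and the characterizations of $m_S$ from Proposition \ref{prop: equivalencias de momento}. The only cosmetic difference is that where the paper obtains the inclusion $\{t\,x\}\subset W(\mathbf{A_{S,E}})$ from the convexity of $W(\mathbf{A_{S,E}})$ together with $0\in W(\mathbf{A_{S,E}})$ and $m_S\subset W(\mathbf{A_{S,E}})$, you exhibit the explicit witness $\rho=tY+(1-t)\,e\otimes e$ with $e\in S^\perp$, which is precisely the density operator underlying that convexity argument.
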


\begin{proof}
	The first equality in \eqref{conos} can be proved in a similar way as done in \cite[Proposition 6.4]{kloboukvarela-mom-jnr} and the beginning of Section 7 of the same paper.
	\\
	Consider $\rho_x=x\otimes x$ with $x\in S^\perp$, $\|x\|=1$. Then 
	\begin{equation}
		\label{eq: 0 pertenece a WASE}
		\left(\tr(P_S E_1 P_S\rho_x),\dots,\tr(P_S E_n P_S\rho_x),\dots\right)=(0,\dots,0,\dots)\in W\left(\mathbf{A_{S,E}}\right).
	\end{equation}
	Next observe that item e) of Proposition \ref{prop: equivalencias de momento} implies $m_S\subset W\left(\mathbf{A_{S,E}}\right)$, and then \eqref{eq: 0 pertenece a WASE} and the convexity of $\WASE$ prove that $\{t\, x: 0\leq t\leq 1 \text{ and } x\in m_S\}\subset \WASE$.
	\\
	Now consider a non-zero $w=(\tr(P_SE_1P_S\rho),\dots,\tr (P_SE_nP_S\rho),\dots)\in W(\mathbf{A_{S,E}})$. Then $w=t\, x$ for $t=\tr(P_S\rho P_S)\leq 1$ (see Equation \eqref{eq: serie de elem de W menor o igual que 1}) and $x=\frac{1}{\tr(P_S\rho P_S)} w\in m_S$ since $\sum_{i=1}^\infty x_i=1$ (item e) of Proposition \ref{prop: equivalencias de momento}). Hence $w=t \, x\in \{t\, x: 0\leq t\leq 1 \text{ and } x\in m_S\}$ and the inclusion
	$$
	\WASE\subset \{t\, x: 0\leq t\leq 1 \text{ and } x\in m_S\}
	$$
	holds.
	
	For the second equality in \eqref{conos}, consider $\rho\in \mathcal{D}$ and $\left(\tr(\rho P_SE_1P_S), \tr(\rho P_SE_2P_S),...\right)\in W\left(\mathbf{A_{S,E}}\right)$. We separate in two different cases:
	\begin{itemize}
		\item If $\tr(P_S\rho P_S)\neq 0$, then, there exist $t\in(0,1]$ (for example $t=\tr(P_S\rho P_S)$) and $\mu \in \mathcal{D}_S$ such that $P_S\rho P_S=t \mu$ and
		$$\left(\tr(\rho P_SE_1P_S), \tr(\rho P_SE_2P_S),...\right)=\left(\tr(P_S\rho P_S E_1P_S), \tr( P_S\rho P_S E_2P_S),...\right)$$
		\begin{eqnarray*}
			&=&\tr(P_S\rho P_S)\left(\frac{1}{\tr(P_S\rho P_S)}\tr(P_S\rho P_S E_1P_S),\frac{1}{\tr(P_S\rho P_S)} \tr( P_S\rho P_S E_2P_S),...\right)\\
			&=&t \left(\frac{1}{\tr(P_S\rho P_S)}\tr(P_S\rho P_SE_1P_S),\frac{1}{\tr(P_S\rho P_S)} \tr( P_S\rho P_SE_2P_S),...\right)\\
			&=& t \left(\tr(\mu P_S E_1P_S), \tr( \mu P_S E_2P_S),...\right),
		\end{eqnarray*}
		with $t\in (0,1]$.
		\item If $\tr(P_S\rho P_S)= 0$ and since $P_S\rho P_S\geq  0$, then $P_S\rho P_S= 0$. Therefore, 
		\begin{eqnarray*}
			\left(\tr(\rho P_SE_1P_S), \tr(\rho P_SE_2P_S),...\right)&=&\left(\tr(P_S\rho P_S E_1P_S), \tr( P_S\rho P_S E_2P_S),...\right)\\
			&=&(0,0,...)\\
			&=& 0 \left(\tr(\mu P_S E_1P_S), \tr( \mu P_S E_2P_S),...\right),
		\end{eqnarray*}
	\end{itemize}
\end{proof}

\begin{remark}
		Analogously as in Proposition \ref{coro: W(ASE) en funcion de mS}, it can be proved that for any family $\mathbf{A_{S,T}}=\{P_ST_nP_S\}_{n\in \N}$, with $\{T_n\}\subset \mathcal{K}(H)^h$,
		$$W(\mathbf{A_{S,T}})=\bigcup_{t\in [0,1]}\left\lbrace t\left(\tr(\mu P_ST_1P_S), \tr(\mu P_ST_2P_S),... \right): \mu\in \mathcal{D}_S \right\rbrace$$
		holds.
\end{remark}

We obtain the next upper bound for the Hausdorff distance between two moments, 
equipped with $\|z\|_{\infty}=\sup_{i\in \N}|z_i|$, for $z\in\ell^1(\C)$.
\begin{lemma}\label{distH}
		Let $S$ and $V$ subspaces of $H$. Then, $\text{dist}_H(m_S,m_V)\leq 2.$ 
		Moreover, if $S\perp V$, then
		$$\text{dist}_H(m_S,m_V)\leq 1.$$
	\end{lemma}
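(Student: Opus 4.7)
The plan is to compare two arbitrary moments pointwise, using only the basic fact that elements of $m_S$ and $m_V$ are non-negative sequences summing to $1$. Concretely, I fix $x\in m_S$ and $y\in m_V$ and write, by Proposition \ref{prop: equivalencias de momento}, $x=\Diag(Y)$ and $y=\Diag(Z)$ with $Y\in\DD_S$, $Z\in\DD_V$. Then $x_i=\langle Ye_i,e_i\rangle\geq 0$, $\sum_i x_i=\tr(Y)=1$, and analogously for $y$. In particular $\|x\|_{\infty}\leq 1$ and $\|y\|_{\infty}\leq 1$.

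For the general bound I would just invoke the triangle inequality: $\|x-y\|_{\infty}\leq\|x\|_{\infty}+\|y\|_{\infty}\leq 2$. This estimate is uniform in the choice of $x\in m_S$ and $y\in m_V$, so $\text{dist}_H(m_S,m_V)\leq 2$ follows directly from the definition of Hausdorff distance.

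For the orthogonal case, the sharper factor is obtained by exploiting non-negativity coordinate by coordinate. Since $x_i,y_i\in[0,1]$, one has $|x_i-y_i|\leq\max(x_i,y_i)\leq 1$. The hypothesis $S\perp V$ gives a natural geometric reading of this inequality: at an extreme point, $x=|s|^2$ and $y=|v|^2$ with $s\in S$, $v\in V$ unit vectors, and then $s\perp v$ implies that $\{s,v\}$ is orthonormal, hence
$$x_i+y_i=|\langle s,e_i\rangle|^2+|\langle v,e_i\rangle|^2=\|P_{\text{span}\{s,v\}}e_i\|^2\leq 1,$$
which in particular yields $|x_i-y_i|\leq 1$. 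Using item c) of Proposition \ref{prop: equivalencias de momento} to write any $x\in m_S$ and $y\in m_V$ as convex combinations of such squared moduli, and noting that the pointwise bounds $x_i,y_i\in[0,1]$ and $|x_i-y_i|\leq 1$ are preserved under convex combinations (since $[0,1]$ is convex), one concludes $\|x-y\|_{\infty}\leq 1$, hence $\text{dist}_H(m_S,m_V)\leq 1$.

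I do not expect a real obstacle here: both bounds are essentially two-line consequences of non-negativity and normalization, and the role of the orthogonality hypothesis is only to give the clean ``partition of unity'' reading $x_i+y_i\leq 1$ at the level of extreme points.
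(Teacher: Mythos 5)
Your proof is correct, and for the sharper bound it takes a genuinely different, more elementary route than the paper. The general estimate $\text{dist}_H(m_S,m_V)\leq 2$ is obtained in essentially the same way in both arguments (each coordinate of $\Diag(Y)$, $Y\in\DD_S$, lies in $[0,1]$, so the triangle inequality gives $2$). For the orthogonal case, however, the paper does not argue coordinatewise: it uses that $Y\in\DD_S$ and $Z\in\DD_V$ have disjoint supports ($YZ=0$) and invokes the operator-norm identity $\|Y-tZ\|=\|Y+tZ\|=\max\{\|Y\|,\|Z\|\}\leq 1$ from Proposition 3 in \cite{BCS}, then bounds the diagonal differences by $\|Y-Z\|$. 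Your argument instead notes that $0\leq x_i,y_i\leq 1$ already forces $|x_i-y_i|\leq\max(x_i,y_i)\leq 1$ for every $i$; this is self-contained, and --- as your own phrasing half-concedes --- it never uses $S\perp V$, so it actually proves the stronger fact that $\text{dist}_H(m_S,m_V)\leq 1$ for \emph{arbitrary} subspaces. (Your supplementary ``partition of unity'' reading $x_i+y_i\leq 1$ at extreme points is the only place orthogonality enters, and it is not needed for the conclusion; the passage from extreme points to general elements is also sound, since membership of each coordinate in $[0,1]$ is preserved under convex combinations.) What the paper's route buys is the stronger operator-norm conclusion $\|Y-Z\|\leq 1$, of which the diagonal estimate is a corollary; what yours buys is elementarity, independence from the external result of \cite{BCS}, and the removal of the orthogonality hypothesis from the constant-$1$ bound.
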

	\begin{proof}
		Let $S,T$ subspaces of $H$, $x\in m_S$ and $y\in m_V$. Then, $x=\{\tr(E_iY)\}_{i\in\N}$ with $Y\in \mathcal{D}_S$, $y=\{\tr(E_iZ)\}_{i\in\N}$ with $Z\in \mathcal{D}_V$ and  
		$$\|x-y\|_{\infty}=\sup_{i\in \N}|\tr(E_iY)-\tr(E_iZ)|=\sup_{i\in \N}|\tr(E_iYE_i)-\tr(E_iZE_i)|=\sup_{i\in \N}|Y_{i,i}-Z_{i,i}|\leq 2,$$
		since $\|Y\|_1=\|Z\|_1=1$.
		
		In the case $S\perp V$, observe that $Y,Z\geq 0$ and $YZ=YP_SP_VZ=0$ (disjoint support). Then, by Proposition 3 in \cite{BCS}
		$$\|Y-tZ\|=\|Y+tZ\|=\max\{\|Y\|;\|Z\|\}\leq 1, \forall t\in \C.$$
		Therefore, 
		$$\text{dist}_H(m_S,m_V)=\max\left\lbrace \sup_{x\in m_S}d(x,m_V);\ \sup_{y\in m_V}d(y,m_S) \right\rbrace\leq 1$$
		if $S\perp V$ (for any $S$ and $T$, $\text{dist}_H(m_S,m_V)\leq2$).		
\end{proof}


The following lines are inspired in Remark 5 of \cite{kloboukvarela-mom-jnr}. Let $S$ be a finite dimensional subspace of $H$. The element of $m_S$ defined by
\begin{equation}
	\label{def centroide}
	c(m_{S})=\frac{1}{\dim S} \sum_{i=1}^{\dim S} |s^i|^2=\frac 1{\dim S}\text{Diag}(P_S)
\end{equation}
for any orthonormal basis $\{s^1,s^2,\dots,s^r\}$ of $S$ fulfills some interesting symmetric properties in the moment set $m_S$.

\medskip

	Let $\text{aff}\left(X\right)$ denote the affine hull of $X\subset B(H)$. 
	Since $\mathcal{D}_S$ can be characterized as a subset of $M_n^h(\mathbb{C})$, then  $\dim\big(\text{aff}\left(\mathcal{D}_S\right)\big)<\infty$ and $\dim\big(\text{aff}\left(\text{Diag}(\mathcal{D}_S)\right)\big)<\infty$.
	Hence the following result follows with almost the same proof of its finite dimensional counterpart in \cite[Proposition 3.4]{kloboukvarela-mom-jnr} 
	by an application of the Hahn-Banach hyperplane separation theorem.

%

\begin{proposition}\label{prop centroide es interior a mS en caps afin}
	Let $S\subset H$ be a subspace of $\dim(S)\geq 2$. Then $\dim\big(\text{aff}\left(\mathcal{D}_S\right)\big)<\infty$, 
	$\dim\left(\text{aff}(m_S)\right)<\infty$ 
	 and $c(m_{S,E})$ is an interior point of $m_S$ relative to the affine hull of $m_S$.
\end{proposition}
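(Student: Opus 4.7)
For the two finite-dimensionality claims, identify the space of self-adjoint operators supported on $S$, namely $\mathcal{B}_S^h := P_S B^h(H) P_S$, with $M_r^h(\C)$ where $r=\dim S$; this is a real vector space of dimension $r^2$. The rank-one density matrices $v\otimes v$ (for $v\in S$, $\|v\|=1$) all lie in $\DD_S$ and their affine hull is the trace hyperplane, so $\text{aff}(\DD_S)=\{Y\in \mathcal{B}_S^h:\tr(Y)=1\}$ has (real) dimension $r^2-1<\infty$. Since $\Diag$ is linear, $\text{aff}(m_S)=\Diag(\text{aff}(\DD_S))$ also has finite dimension at most $r^2-1$.

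For the relative-interior claim, I would argue by contradiction following the Hahn--Banach strategy of the finite-dimensional counterpart. Suppose $c(m_S)$ lies on the relative boundary of $m_S$ inside $\text{aff}(m_S)$. Because the ambient affine hull is finite-dimensional, the hyperplane separation theorem yields a linear functional $\varphi$ on $\text{aff}(m_S)$ which is not constant on $m_S$ yet satisfies
\[
\varphi(c(m_S))\leq \varphi(x)\quad \text{for every } x\in m_S.
\]
Since $\text{aff}(m_S)$ is a closed finite-dimensional subspace of $\ell^1(\R)$, a standard Hahn--Banach extension represents $\varphi$ by a bounded sequence $a\in \ell^\infty(\R)$ via $\varphi(x)=\sum_{i\geq 1} a_i x_i$.

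The key step is to translate the separation into a spectral statement about operators on $S$. Set $D=\Diag(a)$; for every $Y\in \DD_S$,
\[
\varphi(\Diag(Y))=\sum_i a_i Y_{ii}=\tr(DY)=\tr\bigl(P_SDP_S \, Y\bigr).
\]
Minimizing the right-hand side over $\DD_S$ picks out the smallest eigenvalue $\mu_1$ of the self-adjoint operator $P_SDP_S$ acting on $S$, attained at $Y=u_1\otimes u_1$ for a corresponding unit eigenvector. On the other hand, $\varphi(c(m_S))=\tfrac{1}{r}\tr(DP_S)=\tfrac{1}{r}\tr(P_SDP_S)=\tfrac{1}{r}\sum_{i=1}^r \mu_i$. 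The supporting inequality thus forces $\tfrac{1}{r}\sum_{i=1}^r\mu_i=\mu_1$, which, since $\mu_1\leq \mu_i$ for all $i$, implies $\mu_1=\cdots=\mu_r$ and $P_SDP_S=\mu_1 P_S$. But then $\tr(DY)=\mu_1\tr(P_SYP_S)=\mu_1$ is independent of $Y\in\DD_S$, so $\varphi$ is constant on $m_S$, contradicting its choice.

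The main obstacle is the bookkeeping between the $\ell^1$-functional $\varphi$ and the trace pairing on the finite-dimensional space $\mathcal{B}_S^h$; once the identity $\tr(DY)=\sum_i a_i Y_{ii}$ is set up, the argument collapses to the familiar finite-dimensional fact that the maximally mixed state $\tfrac{1}{r}P_S$ minimizes a linear trace functional on density matrices of $S$ only when the compressed operator $P_SDP_S$ is a scalar multiple of $P_S$, which is exactly what rules out the existence of a separating $\varphi$.
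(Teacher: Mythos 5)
Your proof is correct and follows essentially the same route as the paper's: finite-dimensionality via the identification of $\DD_S$ with density matrices on the $r$-dimensional space $S$, and then a Hahn--Banach separation argument at $c(m_S)$ that is defeated because the centroid is the average of $|s^i|^2$ over an orthonormal basis. Your spectral packaging (diagonalizing $P_SDP_S$ and noting that the eigenvalue average equals the minimum only when $P_SDP_S$ is a scalar multiple of $P_S$) is just the paper's argument with the orthonormal basis chosen to be the eigenbasis of the compressed functional, so the two proofs coincide in substance.
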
 
\begin{proof} The finiteness of the dimensions of $\text{aff}\left(\mathcal{D}_S\right)$ and $\text{aff}(m_S)$ was discussed in the previous paragraph.
	
	Now suppose that $c=c(m_{S,E})$ is not an interior point relative to the affine hull $\text{aff}(m_S)$ of $m_S$ with  dim$(\text{aff}(m_S))=d$.	Then the compactness and convexity of $m_S$ (see Remark \ref{rem: sobre props mS}) imply that $c$ belongs to its boundary.	
	Now consider $\text{aff}(m_S)=c+T\subset \ell^1(\R)$ for a real subspace $T$, $\dim T=d$. With these assumptions there exists a functional $f:\ell^1(\R)\to \R$ such that $f(c)=k$ and $f(x)\leq k$, $\forall x\in m_S$. Let us suppose that there exists $v\in S$, $\|v\|=1$ such that $|v|^2\in m_S$ and $f(|v|^2)<k$.
	Now extend the vector $v=s^1$ to an orthonormal basis $\{s^i\}_{i=1}^r$ of $S$ (with $r=\dim S$). Then from the definition of $c$ in \eqref{def centroide}, it follows that $c=\frac1{r} \sum_{i=1}^{r} |s^i|^2$, and therefore, using the linearity of $f$
	\begin{equation}
		\begin{split}
			k&=f(c)=\frac1{r} \sum_{i=1}^{r} f\left(|s^i|^2\right) 
			\ \Rightarrow \\
			& \Rightarrow\ 
			k=f(c)=\frac1{r}  f\left(|s^1|^2\right)+\frac1{r}\sum_{i=2}^{r} f\left(|s^i|^2\right) <\frac k{r}+\frac1{r}\sum_{i=2}^{r} f\left(|s^i|^2\right)\leq \frac k{r}+\frac1{r}\sum_{i=2}^{r} k=k,
		\end{split}
	\end{equation}
	which is a contradiction.
	Using the characterization of $m_S$ from Proposition \ref{prop: equivalencias de momento} b) it must be $f(|x|^2)=k$ for every $x\in S$, with $\|x\|=1$. But this implies that $\text{aff}(m_S)$ has at least one dimension less than $d$. Then $c$ cannot be a boundary point of $m_S$ in $\text{aff}(m_S)$.
\end{proof}

\begin{remark}\label{rem aff hull mS}
	Note that the real affine hull of $m_S$ is $\text{aff}(m_S)=\Diag\left(\mathcal{B}_S^h\right)$ where $\mathcal{B}_S^h=\{X\in B(H): P_S X=XP_S \text{ and } X^*=X\}$.
\end{remark}

\begin{proposition}		\label{prop: varias props centroide}
			Let $S$ be a non-trivial finite dimensional subspace of $H$ with $\dim S=r$, and $E$ be a fixed basis of $H$ and $c(m_S)$ defined as in \eqref{def centroide}. Then $c(m_S)$ satisfies the following properties.
			\begin{enumerate}
				\item $c(m_S)\in m_S$. 
				\item  $c(m_S)$ coincides with the barycenter or centroid of the simplex generated by  $\{|w^1|^2,|w^2|^2,\dots,|w^r|^2\}\subset \R^\N_{\geq 0}$ obtained from any orthonormal basis $\{w^1, w^2,\dots,w^r\}$ of $S$.
				\label{prop cmSE es centroide de cualquier bon}
				\item Let $V$ another subspace of $H$ with $\dim V=k$, such that $S\perp V$. Then,
				\begin{equation}
					\label{eq ecuac centroide suma subesp perps}
					c\left(m_{S\perp V}\right)=\frac{1}{r+k}(r\, c(m_S)+k\, c(m_V)).
				\end{equation} This can be generalized to any number of mutually orthogonal subspaces. 
				\item Given a subspace $D\subset S$, with $\dim D = d < \dim S=r$, then $c(m_{S\ominus D})=c(m_{S\cap D^\perp})=\frac1{r-d}\left(r \, c(m_S)-d\, c(m_D)\right)$.
				\item  Let $S$ and $V$ be two subspaces of $\mathbb{C}^n$ with dimensions $r$ and $k$ respectively, and $D= S \cap V$ of dimension $d$ such that $\left(S\cap D^{\perp}\right)\perp \left(V\cap D^{\perp}\right)$ holds. Then $c(m_{S+V})=\frac{1}{r+k-d}(r\, c(m_S)+k\, c(m_V) - d\, c(m_D))$.
			\end{enumerate}

\end{proposition}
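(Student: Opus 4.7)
The plan is to decompose $S+V$ into three mutually orthogonal subspaces and then apply parts (3) and (4) of the proposition in sequence. Specifically, under the hypothesis $(S\cap D^{\perp})\perp (V\cap D^{\perp})$, together with the fact that $D\perp (S\cap D^{\perp})$ and $D\perp (V\cap D^{\perp})$ hold automatically, one obtains the orthogonal direct sum decomposition
\begin{equation*}
S+V \;=\; D \,\oplus\, (S\cap D^{\perp}) \,\oplus\, (V\cap D^{\perp}),
\end{equation*}
with dimensions $d$, $r-d$, and $k-d$ respectively, summing to $r+k-d=\dim(S+V)$.

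Next, I would invoke the multi-summand version of (3), which expresses the centroid of a finite orthogonal sum as a dimension-weighted convex combination of the centroids of the summands. Applied to the decomposition above this yields
\begin{equation*}
c(m_{S+V}) \;=\; \frac{1}{r+k-d}\Bigl(d\, c(m_D)\,+\,(r-d)\,c(m_{S\cap D^{\perp}})\,+\,(k-d)\,c(m_{V\cap D^{\perp}})\Bigr).
\end{equation*}
Then I would rewrite the last two terms using part (4), noting that $D\subset S$ and $D\subset V$, so that
\begin{equation*}
(r-d)\,c(m_{S\cap D^{\perp}}) \;=\; r\, c(m_S) - d\, c(m_D),\qquad (k-d)\,c(m_{V\cap D^{\perp}}) \;=\; k\, c(m_V) - d\, c(m_D).
\end{equation*}

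Substituting these identities and collecting the $c(m_D)$ contributions gives $d\,c(m_D)+(r\,c(m_S)-d\,c(m_D))+(k\,c(m_V)-d\,c(m_D)) = r\,c(m_S)+k\,c(m_V)-d\,c(m_D)$, and division by $r+k-d$ yields the claimed formula. The only subtlety, and the step I would be most careful about, is verifying that the orthogonality hypothesis on the \textbf{relative complements} together with $D=S\cap V$ is exactly what is needed to make the three-fold decomposition orthogonal; once that is in place, the argument is an algebraic manipulation combining the additive rule (3) and the subtractive rule (4).
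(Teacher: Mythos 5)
Your argument for item (5) is correct and is exactly the intended route: the hypothesis $(S\cap D^{\perp})\perp(V\cap D^{\perp})$, together with the automatic orthogonality of $D=S\cap V$ to both relative complements, gives the orthogonal decomposition $S+V=D\oplus(S\cap D^{\perp})\oplus(V\cap D^{\perp})$ with dimensions $d$, $r-d$, $k-d$, and then the multi-summand form of (3) plus the subtractive identity (4) yield the stated formula after the cancellation you describe. This is the same argument the paper delegates to its finite-dimensional counterpart (Proposition 3.5 of the cited reference), so on item (5) there is nothing to object to.

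The gap is one of coverage rather than correctness: the proposition has five items and your proposal establishes only the last one, taking (3) and (4) (and the many-summand extension of (3)) as given. There is no circularity in doing so, but a complete proof still owes those inputs. They are short: the key observation is that $c(m_S)=\frac1r\Diag(P_S)$, which shows $c(m_S)$ is independent of the chosen orthonormal basis and, via Proposition \ref{prop: equivalencias de momento}(c), lies in $m_S$ as the barycenter of the simplex $\conv\{|w^i|^2\}_{i=1}^r$ for any orthonormal basis $\{w^i\}$ of $S$ (items (1) and (2)). Item (3) and its extension to finitely many mutually orthogonal summands follow by taking the union of orthonormal bases of the summands, since $P_{S\oplus V}=P_S+P_V$ gives $\Diag(P_{S\oplus V})=\Diag(P_S)+\Diag(P_V)$; item (4) is obtained by applying (3) to $S=D\oplus(S\ominus D)$ and solving for $c(m_{S\ominus D})$. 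You should at least record these steps (in particular the basis-independence, which is what makes the "multi-summand version of (3)" legitimate) before invoking them in (5).
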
	

The proof follows the same ideas of the corresponding ones in \cite[Proposition 3.5]{kloboukvarela-mom-jnr}.

\begin{remark}\label{rem comentario sobre centroide}
	Note the similarity of the equation \eqref{eq ecuac centroide suma subesp perps} with the one used to calculate the geometric centroid or barycenter of $m$ disjoint sets $A_j$ with $j=1,\dots,m$ using $c(\cup_{j=1}^m A_j)=\frac{\sum_{j=1}^m c(A_j) \mu(A_j)}{\sum_{j=1}^m\mu(A_j)}$, where $\mu$ is the corresponding measure.
\end{remark}

As it was done in \cite{soportes} in finite dimensions, we define analogously the notion of a pair subspaces of $H$ that form a support (see \cite[Theorem 3]{soportes} for some equivalent definitions of a support).

	\begin{definition}\label{soporte def}
		Let $S$ and $T$ subspaces of $H$ such that $\dim S=p$ and $\dim T=q$. We say that the pair $(S,T)$ forms a support if $m_S\cap m_T\neq \emptyset$, or equivalently, if there exists orthonormal sets $\{v^i\}_{i=1}^p\subset S$ and $\{w^j\}_{j=1}^q\subset T$ such that
		\begin{equation}\label{sop}
			\sum_{i=1}^{p}\alpha_i|v^i|^2=\sum_{j=1}^{q}\beta_j|w^j|^2,
		\end{equation}
		with $\alpha_i,\beta_j\geq 0$, and $\sum_{i=1}^{p}\alpha_i=\sum_{i=1}^{p}\beta_j=1$. \end{definition}
Observe that Definition \ref{soporte def} can be stated also for infinite dimensional subspaces $S$ and $T$ of $H$ if there exist finite collections of orthogonal sets $\{v^i\}_{i=1}^p\subset S$ and $\{w^j\}_{j=1}^q\subset T$ that fulfill \eqref{sop}.
\begin{remark}
	According to definition \label{soporte} and \cite[Corollary 10]{bottazzi-varela-minimal-compacts}, given $C\in \mathcal{K}(H)^h$
	with $\pm \|C\|\in \sigma(C)$, then $C$
	is a minimal operator if and only if the pair $(S_+,S_-)$ is a support,	where $S_+$ and $S_-$ are the corresponding eigenspaces of $\pm \|C\|$.
\end{remark}

\section{Principal vectors and curves of extremal points in $m_S$} \label{sec curves}
In this section we generalize the definition of principal (standard) vectors given in Definition 4.2 of \cite{kloboukvarela-mom-jnr} to obtain the description of curves of extreme points in the moment set $m_S$. We include results that are a natural generalization of the ones contained in Sections 4 and 5 of \cite{kloboukvarela-mom-jnr}.
\subsection{Principal standard vectors}
\begin{definition}\label{def gen S}
		We call a subspace $S\subset H$ a generic subspace with respect to the basis $E=\{e_j\}_{j=1}^{\infty}$ if there exists $x\in S$ such that $\left\langle x,e_j\right\rangle\neq 0$ for every $j\in \N$.
		This definition is equivalent to any of the statements
		\begin{itemize}
			\item $S$ is not included in the subspace $\text{span}\{e_j\}^{\perp}$ for $j\in \N$,
			\item $(P_S(e_j))_j=\left\langle P_Se_j,e_j \right\rangle=\left\langle P_Se_j,P_Se_j \right\rangle=\|P_Se_j\|^2\neq 0$ for all $j\in \N$.
		\end{itemize}
	Note that $S$ can be infinite dimensional in this definition. Also observe that if $S$ is not generic, we can work in another Hilbert space $\hat{H}\subset H$ where $S$ can be embedded isometrically and such that $S$ is generic in $\hat{H}$. Hence, in what follows we will suppose we are working with generic subspaces $S$ of $H$.
	
\end{definition}

\begin{definition}\label{def vec ppales}
	Given a generic subspace $S$ of $H$, we denote by
\begin{equation}\label{vector ppal}
v^j=\frac{P_Se_j}{\|P_Se_j\|}
\end{equation}
the unique principal (unitary) vectors related to the standard basis $E$ that satisfy $(v^j)_j=v^j_j=\left\langle v^j,e_j\right\rangle=\|P_Se_j\|>0 $ and minimize the angle between $S$ and $\text{span}\{e_j\}$, that is
$$\left\langle v^j,e_j\right\rangle=\max_{s\in S,\|s\|=1}|\left\langle s,e_j\right\rangle|=\|P_Se_j\|\leq 1$$
\end{definition}
The uniqueness can be proved observing that if there exists $w\in S$ such that $\|w\|=1$ and $\left\langle w,e_j\right\rangle=\left\langle v^j,e_j\right\rangle $, then
$$\|v^j-w\|^2=\|v^j\|^2+\|w\|^2-2\text{Re}\left(\left\langle v^j,w \right\rangle \right)=0,$$
since $\left\langle v^j,w \right\rangle=\dfrac{\left\langle e_j,w\right\rangle }{\|P_Se_j\|}=1$.

\begin{lemma}\label{propi ps y vectores ppales}
The orthogonal projection $P_S$ can be written matricially  and its infinite associated matrix related to the basis $E$ has the following properties:
\begin{enumerate}
	\item $(P_S)_{ij}=\left\langle P_Se_i,e_j\right\rangle =\|P_Se_i\|v_j^i$, for every $i,j\in \N$.
	\item $(P_S)_{jj}=\left\langle P_Se_j,e_j\right\rangle =\|P_Se_j\|^2=(v_j^j)^2$.
	\item Since $P_S=P_S^*$, $\|P_Se_i\|v_j^i=\|P_Se_j\|\overline{v_i^j}$ and
	$$\dfrac{\overline{v_i^j}}{v_j^i}=\left\lbrace \begin{array}{lll}
	\dfrac{\|P_Se_i\|}{\|P_Se_j\|}\neq 0.&\text{if}& v_i^j,v_j^i\neq 0\\
	0&\text{if}& v_i^j=v_j^i= 0
	\end{array} \right. 
	$$
	\item For each $i,j\in \N$, $v_i^j=\left\langle v^j,e_i\right\rangle=\left\langle v^j,P_Se_i \right\rangle=\|P_Se_i\|\left\langle v^j,v^i\right\rangle  $. Therefore, $v^j_j>0$ and
	$$0=v_i^j\Leftrightarrow v^i\perp v^j.$$
\end{enumerate}
\end{lemma}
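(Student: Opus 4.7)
The plan is to verify each of the four items by a direct computation starting from the defining identity $P_S e_j = \|P_S e_j\|\, v^j$, which is valid for every $j\in\N$ because $S$ is generic (so $\|P_S e_j\|>0$), together with the two fundamental properties $P_S^{*}=P_S$ and $P_S^{2}=P_S$ of the orthogonal projection onto $S$. Throughout, I read matrix entries of $P_S$ in the basis $E$ as inner products via $(P_S)_{ij}=\langle P_S e_i, e_j\rangle$.

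For item (1), I simply substitute $P_S e_i=\|P_S e_i\|\, v^i$ inside $\langle P_S e_i, e_j\rangle$ and use $\langle v^i, e_j\rangle = v^i_j$. Item (2) is then the case $i=j$ of item (1) combined with idempotence and self-adjointness, which force $(P_S)_{jj}=\langle P_S^{2} e_j, e_j\rangle =\|P_S e_j\|^{2}$; the identity $v^j_j=\|P_S e_j\|$ can also be read off directly from \eqref{vector ppal}. Item (3) follows from $(P_S)_{ij}=\overline{(P_S)_{ji}}$: substituting item (1) on both sides yields $\|P_S e_i\|\, v^i_j = \|P_S e_j\|\, \overline{v^j_i}$, from which the ratio formula is an immediate rearrangement. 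Genericity guarantees $\|P_S e_j\|>0$ for all $j$, so this equation also shows that $v^i_j=0$ if and only if $v^j_i=0$, which explains why the two cases listed in the statement exhaust all possibilities.

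For item (4), the key additional ingredient is that $v^j\in S$, so $P_S v^j=v^j$, hence
\[
v^j_i=\langle v^j, e_i\rangle = \langle P_S v^j, e_i\rangle = \langle v^j, P_S e_i\rangle = \|P_S e_i\|\, \langle v^j, v^i\rangle.
\]
The positivity $v^j_j>0$ is then the case $i=j$ combined with item (2) and the genericity bound $\|P_S e_j\|>0$, while the equivalence $v^j_i=0 \Leftrightarrow v^i\perp v^j$ is immediate from the displayed formula, again because $\|P_S e_i\|>0$. The only mild subtlety anywhere in the argument is keeping the complex conjugate in the right place when transporting self-adjointness through item (1); beyond this, each step is a one-line computation and I do not expect any genuine obstacle.
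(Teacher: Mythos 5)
Your proof is correct and complete: each item follows by the direct computation you describe from $P_Se_j=\|P_Se_j\|\,v^j$, genericity ($\|P_Se_j\|>0$), and $P_S=P_S^*=P_S^2$, with the conjugate correctly placed in item (3) and the observation $P_Sv^j=v^j$ supplying item (4). The paper states this lemma without proof, treating it as a routine verification, and your argument is precisely the intended one.
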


\begin{proposition} \label{vectores en S}
	Let $\{v^j\}_{j=1}^{\infty}$, be the principal vectors defined in \eqref{vector ppal}. Then the following statements hold.
	\begin{enumerate}
		\item Given $w\in S$, with $\|w\|=1$. Then, for every $j$, 	$$w_j=\|P_Se_j\|\left\langle w,v^j\right\rangle $$ 
		and $|w_j|\leq v_j^j= |v_j^j|$.
		\item \label{item2propVecPpales}  $v^j_j=|w_j|$ if and only if $w=e^{i\arg(w_j)}v^j. $		
		\item \label{item3propVecPpales} In particular, $v^j_j=|v_j^k|$ if and only if $v^k=e^{i\arg(v^k_j)}v^j$. This is also equivalent to $|v_i^j|=|v_i^k|$ for every $i\in \N$.
		\item \label{item4propVecPpales} As a consequence, $\{v^j,v^k\}$ is linearly independent if and only if
		$$v^j_j\neq|v_j^k|\Leftrightarrow v^k_k\neq|v_k^j|$$
	\end{enumerate}
\end{proposition}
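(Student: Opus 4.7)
The plan is to handle the four items sequentially, since each builds on the previous and the core tool throughout is the Cauchy--Schwarz inequality together with its equality case. The role of the principal vector $v^j$ is precisely to be the unit vector in $S$ that maximizes $|\langle s, e_j\rangle|$ among unit vectors $s\in S$, so the content of the proposition amounts to rephrasing this extremality in several forms.

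For item (1), I would exploit $w\in S$ to write $w = P_S w$ and hence
\[
w_j = \langle w, e_j\rangle = \langle P_S w, e_j\rangle = \langle w, P_S e_j\rangle = \|P_S e_j\|\,\langle w, v^j\rangle
\]
by the definition of $v^j$ in \eqref{vector ppal}. Definition~\ref{def vec ppales} also gives $v_j^j = \|P_S e_j\| > 0$ for a generic $S$, so in particular $v_j^j = |v_j^j|$. Cauchy--Schwarz applied to $\langle w, v^j\rangle$, using $\|w\|=\|v^j\|=1$, then yields $|w_j| \leq \|P_S e_j\| = v_j^j$.

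For item (2), the equality $|w_j| = v_j^j$ forces $|\langle w, v^j\rangle|=1$, which is the equality case of Cauchy--Schwarz for two unit vectors, and so $w = \alpha v^j$ for some $\alpha\in\mathbb{C}$ with $|\alpha|=1$. Reading off the $j$-th coordinate gives $w_j = \alpha\, v_j^j$, and since $v_j^j$ is positive and real this forces $\alpha = e^{i\arg(w_j)}$. Item (3) is then the specialization $w = v^k$ of item (2); the additional equivalence with $|v_i^j|=|v_i^k|$ for every $i$ is immediate in the forward direction, and the converse just reads off the $i=j$ case to obtain $|v_j^k|=v_j^j$ and then invokes the first equivalence in item (3).

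Finally, item (4) follows by noting that since both $v^j$ and $v^k$ are unit vectors, linear dependence of $\{v^j,v^k\}$ is equivalent to $v^k = e^{i\theta}v^j$ for some $\theta\in\mathbb{R}$; item (3) identifies this with $v_j^j = |v_j^k|$, and swapping the roles of $j$ and $k$ in item (3) gives the further equivalence with $v_k^k = |v_k^j|$. I do not anticipate a serious obstacle here; the only point requiring attention is tracking that $v_j^j$ is real and positive, which is what lets the equality case of Cauchy--Schwarz produce the explicit phase factor $e^{i\arg(\cdot)}$ rather than just an undetermined unit scalar.
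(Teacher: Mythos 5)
Your proposal is correct and follows essentially the same route as the paper: the identity $w_j=\langle w,P_Se_j\rangle=\|P_Se_j\|\langle w,v^j\rangle$ followed by the equality case of Cauchy--Schwarz, with items (3) and (4) obtained by specializing $w=v^k$. You in fact spell out a few details the paper leaves implicit (the determination of the phase from $v_j^j>0$, the coordinatewise equivalence in item (3), and item (4)), all correctly.
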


\begin{proof}
Let $w\in S$ with $\|w\|=1$, then there exists $v\in H$ such that $w=P_Sv$ and by Lemma \ref{propi ps y vectores ppales}
$$w_j=\left\langle w,e_j\right\rangle= \left\langle P_Sv,e_j \right\rangle=\left\langle w,P_Se_j\right\rangle=\|P_Se_j\|\left\langle w, v^j \right\rangle.$$
On the other hand, observe that $v^j_j=|w_j|$ yields to
$$v^j_j=\|P_Se_j\|\  |\left\langle w, v^j \right\rangle|,$$
or equivalently
$$\|w\|\|v\|=1=\left\langle v^j, v^j \right\rangle=|\left\langle w, v^j \right\rangle|.$$
Then, equality of Cauchy-Schwarz is attained if and only if $w$ and $v^j$ are multiples, that is $w=\lambda v^j$ with $|\lambda|=1$. 

Item \eqref{item3propVecPpales} can be proved replacing $w=v^k$ in item $\eqref{item2propVecPpales}$.
\end{proof}

The following result can be proved as a consequence of Proposition \ref{vectores en S}, using the same arguments that in Proposition 4.4 in \cite{kloboukvarela-mom-jnr}.

\begin{proposition}\label{extreme points in ms}
Let $S$ be a generic subspace of $H$. Then, $|v^j|^2=(|v^j_1|^2, |v^j_2|^2,\dots, |v^j_n|^2,\dots) $ 
is an extreme point in $m_S$. 
Moreover, if $|v^j|^2 $ is a convex combination of $|y|^2$ and $|z|^2$ with $y, z\in S$, then $y$ and $z$ must be multiples of $v^j$.
\end{proposition}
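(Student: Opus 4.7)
The plan is to exploit the sharp coordinate bound $|w_j|\leq v^j_j$ from Proposition \ref{vectores en S} (1), together with its equality case from Proposition \ref{vectores en S} (2), by reading off the $j$-th coordinate of any convex decomposition of $|v^j|^2$.

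First I would treat the ``moreover'' clause, since it is the cleanest. Suppose
\[
|v^j|^2=\lambda\,|y|^2+(1-\lambda)\,|z|^2,\qquad y,z\in S,\ \|y\|=\|z\|=1,\ \lambda\in(0,1).
\]
Reading the $j$-th coordinate gives $(v^j_j)^2=\lambda|y_j|^2+(1-\lambda)|z_j|^2$. By Proposition \ref{vectores en S} (1) we have $|y_j|,|z_j|\leq v^j_j$, so both inequalities in the convex combination must be equalities, i.e.\ $|y_j|=|z_j|=v^j_j$. Then Proposition \ref{vectores en S} (2) forces $y=e^{i\arg(y_j)}v^j$ and $z=e^{i\arg(z_j)}v^j$, so $y$ and $z$ are unimodular multiples of $v^j$, as required.

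Next I would deduce that $|v^j|^2$ is an extreme point of $m_S$. By Proposition \ref{prop: equivalencias de momento} b), every element of $m_S$ is a (possibly infinite) convex combination of squared moduli of unit vectors of $S$. Suppose then $|v^j|^2=\sum_i\lambda_i|y^{(i)}|^2$ with $y^{(i)}\in S$, $\|y^{(i)}\|=1$, $\lambda_i\geq 0$, $\sum_i\lambda_i=1$. Reading off the $j$-th coordinate and again invoking Proposition \ref{vectores en S} (1),
\[
(v^j_j)^2=\sum_i\lambda_i|y^{(i)}_j|^2\leq \sum_i\lambda_i (v^j_j)^2=(v^j_j)^2,
\]
so each $y^{(i)}$ with $\lambda_i>0$ satisfies $|y^{(i)}_j|=v^j_j$. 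By Proposition \ref{vectores en S} (2), every such $y^{(i)}$ is a unimodular multiple of $v^j$, hence $|y^{(i)}|^2=|v^j|^2$. Thus any convex decomposition of $|v^j|^2$ is trivial, which is precisely the definition of an extreme point.

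The only technical point to watch is that $v^j_j=\|P_Se_j\|>0$ by genericity (Definition \ref{def gen S}), so the bound $|w_j|\leq v^j_j$ is meaningful and the equality case is nonvacuous; no further analytic subtlety is required because everything reduces to a single scalar inequality in the $j$-th coordinate. I expect the only mild care needed is the reduction of a general element of $m_S$ to a convex combination of the form $\sum_i\lambda_i|y^{(i)}|^2$, but this is exactly provided by Proposition \ref{prop: equivalencias de momento} b).
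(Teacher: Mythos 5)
Your argument is correct and follows exactly the route the paper intends: the paper gives no written proof beyond deferring to Proposition \ref{vectores en S} (the bound $|w_j|\leq v^j_j$ and its rigidity case) and to the finite-dimensional argument of the cited reference, and your coordinate-reading proof is precisely that argument carried out. The only cosmetic point is that in the ``moreover'' clause you assume $\|y\|=\|z\|=1$, which is the natural reading; the general case reduces to it by normalizing $y$ and $z$ and absorbing their norms into the convex weights.
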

 
\subsection{Curves of extreme points in $m_S$}
\begin{definition}\label{curvas}
	Let $S$ be a generic subspace of $H$ and $v^j,v^k$ two linear independent principal standard vectors of $S$. We define the curve, $v^{j\to k}:[0,2\pi]\to S$
	\begin{equation}\label{eq curvas}
		\vjk=\cos(t)v^j+\sin(t)e^{i\arg(v_k^j)}\dfrac{(v^k-\left\langle v^k,v^j \right\rangle v^j )}{\|v^k-\left\langle v^k,v^j \right\rangle v^j\|},
	\end{equation}
\end{definition}

Next, we establish some properties of these curves in analogy with \cite{kloboukvarela-mom-jnr}. They can be proved using standard techniques.

\begin{proposition} \label{prop of curves}
	Let $S$ be a generic subspace of $H$ with $\{v^j\}_{j\in \N}$ the collection of principal unitary vectors related to the standard basis $E$ and $S$. The following properties hold:
	\begin{enumerate}
		\item The vectors $v^j$ and $\dfrac{(v^k-\left\langle v^k,v^j \right\rangle v^j )}{\|v^k-\left\langle v^k,v^j \right\rangle v^j\|}$ are unitary and orthogonal. Then $\|\vjk\|=1$ for every $t$, $v^{j\to k}(0)=v^j$ and
		$$\left\langle \vjk,e^{i\arg\left( v^j_k\right)}v^k \right\rangle\geq 0, \text{ for every } t\in \left[0,{\pi}/{2}\right]. $$
		\item By Lemma \ref{propi ps y vectores ppales} the $j$ and $k$ coordinates of $\vjk$ are
		\begin{equation}\label{coord vkj}
			v^{j\to k}_j(t)=\cos(t)v_j^j\ \text{ and }\ v^{j\to k}_k(t)=\cos(t)v^j_k +\sin(t)e^{i\arg\left( v^j_k\right)}\sqrt{(v_k^k)^2-|v_k^j|^2},
		\end{equation}
		respectively.
		\item The restriction $\vjk:\left[0,\frac{\pi}{2}\right]\to \text{Im } (v^{j\to k})$ is bijective. 
		\item If $\beta^{j\to k}(t)=\cos(t)\dfrac{e_j}{\|P_Se_j\|}+\sin(t)e^{i\arg(v_k^j)}\dfrac{\left( \dfrac{e_k}{\|P_Se_k\|}-\left\langle v^k,v^j \right\rangle \dfrac{e_j}{\|P_Se_j\|}\right) }{\|v^k-\left\langle v^k,v^j \right\rangle v^j\|}$,
		then 
		$$\vjk=P_S(\beta^{j\to k}(t)).$$
		\item If $e^{j\to k}(t)=\dfrac{\beta^{j\to k}(t)}{\left\| \beta^{j\to k}(t)\right\| }$, then
		$$\left\langle \vjk,e^{j\to k}(t)\right\rangle=\max_{s\in S,\|s\|=1}|\left\langle s,e^{j\to k}(t)\right\rangle|=\left\| P_S(e^{j\to k}(t)) \right\|$$
		and $\vjk=\dfrac{e^{j\to k}(t)}{\left\| e^{j\to k}(t)\right\| }$.
		\item If $w\in S$ with $\|w\|=1$,
		$$\left| \left\langle \vjk,e^{j\to k}(t)\right\rangle\right|= \left\langle \vjk,e^{j\to k}(t)\right\rangle\geq |\left\langle w,e^{j\to k}(t)\right\rangle|,$$
		for all $t\in\left[0,\frac{\pi}{2}\right] $. Moreover, 
		\begin{equation}
			\left\langle \vjk,e^{j\to k}(t)\right\rangle=| \left\langle w,e^{j\to k}(t)\right\rangle| \Leftrightarrow w=e^{i\arg\left(  \left\langle w,e^{j\to k}(t)\right\rangle\right) }\vjk.
		\end{equation}
		\item In particular, 
		$$\left\langle \vjk,e^{j\to k}(t)\right\rangle=| \left\langle v^{j\to k}(t_0),e^{j\to k}(t)\right\rangle|,\text{ for } t_0\in \left[0,{\pi}/{2}\right]$$
		$$
		\Leftrightarrow |\left\langle \vjk,e^{j\to k}(t)\right\rangle|=| \left\langle v^{j\to k}(t_0),e^{j\to k}(u)\right\rangle|,\ \forall u\in \left[0,{\pi}/{2}\right].
		$$
		\item As a consequence, the set $\{\vjk,v^{j\to k}(s)\}$ is linearly independent if and only if 	 
		$$\left\langle \vjk,e^{j\to k}(t)\right\rangle  \neq | \left\langle v^{j\to k}(s),e^{j\to k}(t)\right\rangle|$$
	\end{enumerate}
\end{proposition}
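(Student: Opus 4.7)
The plan is to verify the eight items sequentially. Throughout I set $u=(v^k-\langle v^k,v^j\rangle v^j)/\|v^k-\langle v^k,v^j\rangle v^j\|$, which is well-defined by the linear independence hypothesis combined with Proposition \ref{vectores en S}(\ref{item4propVecPpales}). Since $\langle u,v^j\rangle=0$ by direct expansion, Pythagoras gives $\|v^{j\to k}(t)\|=1$ for every $t$. For the inequality in item (1), I would expand $v^k$ in the orthonormal pair $\{v^j,u\}$ and use that $\overline{e^{i\arg(v_k^j)}}\,v_k^j=|v_k^j|\geq 0$, which shows that $\langle v^{j\to k}(t),e^{i\arg(v_k^j)}v^k\rangle$ is a non-negative combination of $\cos(t)$ and $\sin(t)$ on $[0,\pi/2]$. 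Item (2) is a componentwise calculation: $u_j=0$ (we subtracted exactly the $v^j$ component), so $v^{j\to k}_j(t)=\cos(t)v^j_j$; for the $k$-th coordinate I substitute $\langle v^k,v^j\rangle=v_k^j/v_k^k$ and $\|v^k-\langle v^k,v^j\rangle v^j\|^2=1-|v_k^j|^2/(v_k^k)^2$, both obtained from Lemma \ref{propi ps y vectores ppales}, which collapses the expression to the stated formula.

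For (3), if $v^{j\to k}(t_1)=v^{j\to k}(t_2)$, the orthogonality of $v^j$ and $u$ forces $\cos(t_1)=\cos(t_2)$ and $\sin(t_1)=\sin(t_2)$, giving $t_1=t_2$ on $[0,\pi/2]$. For (4), applying $P_S$ term by term to $\beta^{j\to k}(t)$ and using $P_S e_j=\|P_S e_j\|v^j$ (Definition \ref{def vec ppales}) cancels the denominators and recovers $v^{j\to k}(t)$.

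Items (5)--(8) form the core, extending Proposition \ref{vectores en S} from the pair $(v^j,e_j)$ to $(v^{j\to k}(t),e^{j\to k}(t))$. For (5) I use that $v^{j\to k}(t)=P_S\beta^{j\to k}(t)$ together with $e^{j\to k}(t)=\beta^{j\to k}(t)/\|\beta^{j\to k}(t)\|$, so that for any unit $s\in S$ one has $|\langle s,e^{j\to k}(t)\rangle|=|\langle s,P_Se^{j\to k}(t)\rangle|\leq\|P_Se^{j\to k}(t)\|$ by Cauchy--Schwarz, with equality attained precisely at $s=v^{j\to k}(t)$; this yields simultaneously the maximum property and the identity $v^{j\to k}(t)=P_Se^{j\to k}(t)/\|P_Se^{j\to k}(t)\|$. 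Item (6) is the equality case of Cauchy--Schwarz reproduced as in the uniqueness argument following Definition \ref{def vec ppales}, forcing $w=e^{i\arg(\langle w,e^{j\to k}(t)\rangle)}v^{j\to k}(t)$. Item (7) is then immediate by specializing (6) to $w=v^{j\to k}(t_0)$, and item (8) is the contrapositive of (7): a linear dependence $v^{j\to k}(s)=\lambda v^{j\to k}(t)$ with $|\lambda|=1$ would produce equality in the inequality of (6).

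The main obstacle I anticipate is aligning phases in (5): one must check that the factor $e^{i\arg(v_k^j)}$ occurring inside $v^{j\to k}(t)$ matches the corresponding factor inside $\beta^{j\to k}(t)$, so that $\langle v^{j\to k}(t),e^{j\to k}(t)\rangle=\|P_Se^{j\to k}(t)\|$ is real and positive, mirroring the role played by the condition $v^j_j>0$ in Definition \ref{def vec ppales}. Once this phase accounting is carried out, items (6)--(8) follow as verbatim translations of the arguments used for Proposition \ref{vectores en S}.
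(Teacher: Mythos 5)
Your outline is essentially correct and in fact supplies more detail than the paper itself, which only remarks that these properties ``can be proved using standard techniques'' in analogy with \cite{kloboukvarela-mom-jnr}; your route --- orthonormalizing $\{v^j,u\}$ with $u=(v^k-\langle v^k,v^j\rangle v^j)/\|v^k-\langle v^k,v^j\rangle v^j\|$, computing coordinates via Lemma \ref{propi ps y vectores ppales}, and then rerunning the Cauchy--Schwarz equality-case argument of Proposition \ref{vectores en S} with the pair $(\vjk,e^{j\to k}(t))$ in place of $(v^j,e_j)$ --- is precisely that standard argument. Two small corrections. First, in item (2) your substitution $\langle v^k,v^j\rangle=v_k^j/v_k^k$ is missing a conjugate: Lemma \ref{propi ps y vectores ppales}(4) gives $v_k^j=\|P_Se_k\|\langle v^j,v^k\rangle$, hence $\langle v^k,v^j\rangle=\overline{v_k^j}/v_k^k$; without the bar the $k$-th coordinate of $u$ would come out as $\bigl(v^k_k-(v^j_k)^2/v^k_k\bigr)/\|v^k-\langle v^k,v^j\rangle v^j\|$ instead of $\sqrt{(v_k^k)^2-|v_k^j|^2}$, so the conjugate is what makes the expression collapse to \eqref{coord vkj}. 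Second, the phase obstacle you anticipate in item (5) is automatic and requires no accounting: since $\vjk=P_S\bigl(\beta^{j\to k}(t)\bigr)$ by item (4) and $e^{j\to k}(t)=\beta^{j\to k}(t)/\|\beta^{j\to k}(t)\|$, one gets $\langle \vjk,e^{j\to k}(t)\rangle=\|P_S\beta^{j\to k}(t)\|^2/\|\beta^{j\to k}(t)\|>0$ directly, with positivity built in. Note also that the identity you actually establish there, $\vjk=P_S\bigl(e^{j\to k}(t)\bigr)/\|P_S\bigl(e^{j\to k}(t)\bigr)\|$, is the correct reading of the (apparently mistyped) final display of item (5), and your reading of item (7) as asserting $|\langle v^{j\to k}(u),e^{j\to k}(u)\rangle|$-type equalities for all $u$ once unimodular proportionality is known is the intended one.
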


\begin{theorem} 
	If $\vjk$ is the curve defined in \eqref{eq curvas}, with $t\in \left[0,\frac{\pi}{2}\right]$, and $x\in S$ with $\|x\|=1$. Then, there exists a unique $t_x\in \left[0,\frac{\pi}{2}\right]$ such that
	\begin{equation}\label{tx}
		|x_j|=|v_j^{j\to k}(t_x)|\ \text{ and }\ |x_k|\leq |v_k^{j\to k}(t_x)|.
	\end{equation} 
	Moreover, if 
	\begin{equation}\label{wjk}
		w^{jk}=e^{i\arg(v_k^j)}\dfrac{(v^k-\left\langle v^k,v^j \right\rangle v^j )}{\|v^k-\left\langle v^k,v^j \right\rangle v^j\|}
	\end{equation}
	and $x=av^j+b w^{jk}+c y$ with $y\in S$ and $y$ is orthogonal to $v^j$ and $w^{jk}$,  then $t_x=\arccos(|a|)$.
\end{theorem}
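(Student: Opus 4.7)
The plan is to handle existence and uniqueness of $t_x$ first, then exploit the orthogonal decomposition to verify the inequality on the $k$-th coordinate, and finally identify $t_x = \arccos(|a|)$ from the explicit form of $v^{j\to k}(t)$.

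For existence and uniqueness, observe that from equation \eqref{coord vkj} we have $|v^{j\to k}_j(t)| = \cos(t)\,v_j^j$ for $t\in[0,\pi/2]$, since $v_j^j>0$. By Proposition \ref{vectores en S}(1), any unit $x\in S$ satisfies $|x_j|\le v_j^j$, so $|x_j|/v_j^j\in[0,1]$. Since $\cos$ is a bijection from $[0,\pi/2]$ onto $[0,1]$, there is a unique $t_x\in[0,\pi/2]$ with $\cos(t_x)=|x_j|/v_j^j$, i.e.\ $|x_j|=|v^{j\to k}_j(t_x)|$.

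The bulk of the argument is the inequality on the $k$-th coordinate, which is where the decomposition comes in. Write $x=av^j+bw^{jk}+cy$ with $y\in S$, $y\perp v^j$, $y\perp w^{jk}$, and WLOG $\|y\|=1$, so $|a|^2+|b|^2+|c|^2=1$. The essential computation is to evaluate the $j$-th and $k$-th coordinates of $w^{jk}$ using Lemma \ref{propi ps y vectores ppales} together with the identity $\langle v^k,v^j\rangle=\overline{v_k^j}/v_k^k$ (which follows from item (4) of the lemma). A direct substitution gives $w^{jk}_j=0$ and
$$
w^{jk}_k=e^{i\arg(v_k^j)}\sqrt{(v_k^k)^2-|v_k^j|^2}.
$$
Also, since $v^k\in\mathrm{span}\{v^j,w^{jk}\}$ by construction, the assumption $y\perp v^j,\,w^{jk}$ forces $y\perp v^k$, and Proposition \ref{vectores en S}(1) then yields $y_j=0$ and $y_k=0$. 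Consequently $x_j=av_j^j$ and
$$
x_k=e^{i\arg(v_k^j)}\bigl(a|v_k^j|+b\sqrt{(v_k^k)^2-|v_k^j|^2}\bigr).
$$

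The first identity $|x_j|=|a|v_j^j$ combined with $|x_j|=\cos(t_x)v_j^j$ immediately gives $\cos(t_x)=|a|$, hence $t_x=\arccos(|a|)$, proving the ``moreover'' claim. For the inequality, the triangle inequality yields
$$
|x_k|\le|a|\,|v_k^j|+|b|\sqrt{(v_k^k)^2-|v_k^j|^2},
$$
and since $|b|\le\sqrt{1-|a|^2}=\sin(t_x)$, the right-hand side is bounded by $\cos(t_x)|v_k^j|+\sin(t_x)\sqrt{(v_k^k)^2-|v_k^j|^2}=|v^{j\to k}_k(t_x)|$, which is the required bound. The main obstacle is the coordinate computation for $w^{jk}$; once the relations among $v_j^k$, $v_k^j$, $v_j^j$ and $v_k^k$ from Lemma \ref{propi ps y vectores ppales} are used carefully, the remaining steps reduce to the triangle inequality and the normalization $|a|^2+|b|^2+|c|^2=1$.
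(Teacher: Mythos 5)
Your argument is correct and complete: the unique $t_x$ is pinned down by $|x_j|=\cos(t_x)v_j^j$ via Proposition \ref{vectores en S}(1) and the bijectivity of $\cos$ on $[0,\pi/2]$, and your coordinate computation $w^{jk}_j=0$, $w^{jk}_k=e^{i\arg(v_k^j)}\sqrt{(v_k^k)^2-|v_k^j|^2}$ together with $y_j=y_k=0$ and the triangle inequality gives exactly the bound $|x_k|\leq|v_k^{j\to k}(t_x)|$ and the identification $t_x=\arccos(|a|)$. The paper itself only defers to the finite-dimensional case (Theorem 5.5 of \cite{kloboukvarela-mom-jnr}), and your proof is the natural transcription of that argument, so nothing further is needed.
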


\begin{proof}
	The proof is analogous to the finite dimensional case presented in \cite[Theorem 5.5]{kloboukvarela-mom-jnr}. 
\end{proof}

\begin{theorem}\label{teo curva entre extremales gral}
	Let $S\subset H$ be a generic subspace, $\{v^j,v^k\}$ two linearly independent principal standard vectors, $m_S$ the moment of $S$ as in Proposition \ref{prop: equivalencias de momento}, and $\gamma_{j,k}:\left[0,\frac{\pi}{2}\right]\to m_S$, the curve defined by
	\begin{equation}\label{curvaextremal}
		\gamma_{j,k}(t)=\left|\vjk \right|^2=\left( |v_1^{j\to k}(t)|^2,  |v_2^{j\to k}(t)|^2,...\right) 
	\end{equation}
	with $\vjk$ as in \eqref{eq curvas}. Then, 
	\begin{enumerate}
		\item $\left( |v_j^{j\to k}(t)|,  |v_k^{j\to k}(t)|\right) $ is part of an ellipse in $\R^2$ centered at the origin.
		\item If $v^j$ and $v^k$ are orthogonal, then $\left( |v_j^{j\to k}(t)|^2,  |v_k^{j\to k}(t)|^2\right) $ parametrizes a segment that is in the boundary of the projection of $m_S$ onto the plane spanned by $e_j$ and $e_k$.
		\item \label{item3 curvas extremales} If $v^j$ and $v^k$ are not orthogonal, then  $\left( |v_j^{j\to k}(t)|^2,  |v_k^{j\to k}(t)|^2\right)$ is an extreme point in the set $\{(x_j,x_k):\ x\in m_S\}$ and $\gamma_{j,k}(t)$ is an extremal point of $m_S$ for every $t\in \left[0,\frac{\pi}{2}\right]$.
	\end{enumerate}
	
\end{theorem}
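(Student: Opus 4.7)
The plan is to reduce all three claims to the explicit coordinate formulas in \eqref{coord vkj}. Setting $\alpha=|v_k^j|\geq 0$ and $\beta=\sqrt{(v_k^k)^2-\alpha^2}$, one has $\beta>0$ by the linear independence of $\{v^j,v^k\}$ and item \ref{item4propVecPpales} of Proposition \ref{vectores en S}; for $t\in[0,\pi/2]$ the absolute values drop and the formulas read
$$|v_j^{j\to k}(t)|=v_j^j\cos t,\qquad |v_k^{j\to k}(t)|=\alpha\cos t+\beta\sin t.$$
For part 1, the map $(\cos t,\sin t)\mapsto(v_j^j\cos t,\alpha\cos t+\beta\sin t)$ is the restriction to the unit circle of an $\R$-linear bijection of $\R^2$ (its matrix has determinant $v_j^j\beta\neq 0$), so its image is an ellipse centered at the origin.

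For part 2, orthogonality $v^j\perp v^k$ is equivalent by Lemma \ref{propi ps y vectores ppales} to $\alpha=0$, whence $(|v_j^{j\to k}(t)|^2,|v_k^{j\to k}(t)|^2)=((v_j^j)^2\cos^2 t,(v_k^k)^2\sin^2 t)$ traces the segment $x/(v_j^j)^2+y/(v_k^k)^2=1$ in the first quadrant. To place this segment on the boundary of the projection of $m_S$ onto the $(j,k)$-plane, I would decompose any unit $w\in S$ as $w=av^j+bv^k+z$ with $z\perp v^j,v^k$; Proposition \ref{vectores en S}(1) yields $|w_j|^2=(v_j^j)^2|a|^2$ and $|w_k|^2=(v_k^k)^2|b|^2$, so $|a|^2+|b|^2\leq 1$ together with convexity confines the projection to the triangular region $\{(x,y):x,y\geq 0,\,x/(v_j^j)^2+y/(v_k^k)^2\leq 1\}$, whose hypotenuse is exactly the parametrized segment.

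For part 3, I would first prove a uniqueness lemma: if $\alpha>0$ and $w\in S$ is a unit vector with $(|w_j|,|w_k|)=(|v_j^{j\to k}(t)|,|v_k^{j\to k}(t)|)$, then $w=e^{i\phi}\vjk$ for some phase $\phi$. Decomposing $w=av^j+bw^{jk}+z$ with $z\perp v^j,w^{jk}$ and rewriting $v^k$ in the orthonormal pair $\{v^j,w^{jk}\}$, Proposition \ref{vectores en S}(1) gives $|w_j|=v_j^j|a|$ and $|w_k|=|\alpha a+\beta b|$; the norm constraint $|a|^2+|b|^2+\|z\|^2=1$ together with the equality case in the triangle inequality forces $\|z\|=0$, $|a|=\cos t$, $|b|=\sin t$, and phase alignment. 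Next I would establish strict concavity of the upper envelope of the projection: with $s=\cos^2 t$, the boundary arc is $y=h(s)=\beta^2+(\alpha^2-\beta^2)s+2\alpha\beta\sqrt{s(1-s)}$, and a routine computation yields $h''(s)=-\alpha\beta/(2(s(1-s))^{3/2})<0$ on $(0,1)$. Strict concavity, plus the envelope bound obtained as in part 2 but with $v^k$ replaced by $w^{jk}$, implies every point of the arc is an extreme point of the $(j,k)$-projection of $m_S$.

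Finally, the extremality of $\gamma_{j,k}(t)$ in $m_S$ follows by projection: given $\gamma_{j,k}(t)=\lambda x+(1-\lambda)y$ with $x,y\in m_S$ and $\lambda\in(0,1)$, projecting onto the $(j,k)$-plane and invoking the extremality just shown forces both projections to equal $(|v_j^{j\to k}(t)|^2,|v_k^{j\to k}(t)|^2)$; then writing $x=\sum_i\mu_i|w^{(i)}|^2$ via Proposition \ref{prop: equivalencias de momento}(c) and applying the extremality again to each summand, the uniqueness lemma yields $w^{(i)}=e^{i\phi_i}\vjk$, hence $|w^{(i)}|^2=\gamma_{j,k}(t)$ and therefore $x=\gamma_{j,k}(t)$, and similarly for $y$. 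The main obstacle is the strict concavity of $h$: it is precisely the sign of $h''$ that distinguishes the non-orthogonal case $\alpha>0$ (strictly concave envelope, every curve point extreme) from the orthogonal case $\alpha=0$ (affine envelope, only the endpoints extreme), and without it the projection argument for the extremality of $\gamma_{j,k}(t)$ collapses in accordance with the statement of part 2.
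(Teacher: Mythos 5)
Your argument is correct and follows essentially the same route as the paper's proof: reduce everything to the coordinate formulas \eqref{coord vkj}, identify the ellipse, split into the cases $v_k^j=0$ (segment) and $v_k^j\neq 0$ (strictly concave upper envelope of the $(j,k)$-projection), and lift extremality from the projection back to $m_S$ via the uniqueness of unit vectors attaining the envelope. The paper states the concavity and the lifting only by reference to \cite[Theorem 5.6]{kloboukvarela-mom-jnr} and to \eqref{tx}, whereas you supply the explicit computation $h''(s)=-\alpha\beta/(2(s(1-s))^{3/2})$ and the envelope bound; this correctly isolates strict concavity (i.e.\ $\alpha>0$) as the exact point separating item (3) from item (2).
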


\begin{proof}
	Using the coordinates of $\vjk$ given in \eqref{coord vkj}, it is evident that the pair
	$\left( |v_j^{j\to k}(t)|,  |v_k^{j\to k}(t)|\right)$ is part of an ellipse centered at $(0,0)$ for $t\in \left[0,\frac{\pi}{2}\right]$.
	
	On the other hand, observe that 
	\begin{equation}\label{curva extremal en r2}
		\begin{split}
			\left( |v_j^{j\to k}(t)|^2,  |v_k^{j\to k}(t)|^2\right)= & \cos^2(t)\left((v_j^j)^2,|v_k^j|^2 \right)+\sin^2(t)\left(0,(v_k^k)^2-|v_k^j|^2 \right)\\
		&+2\sin(t)\cos(t)\left(0, |v_k^j|\sqrt{(v_k^k)^2-|v_k^j|^2} \right).
		\end{split}
	\end{equation}
	Note that $\left( |v_j^{j\to k}(0)|^2,  |v_k^{j\to k}(0)|^2\right)=\left((v_j^j)^2,|v_k^j|^2 \right)$ and  $\left( |v_j^{j\to k}\left(\frac{\pi}{2} \right) |^2,  |v_k^{j\to k}\left( \frac{\pi}{2}\right) |^2\right)=\left(0,(v_k^k)^2-|v_k^j|^2 \right)$. So, there are different cases of this curve to explore: 
	
	a) The last term in \eqref{curva extremal en r2} is $0$ only if $t\in \{0,\pi/2
	\}$, $v_k^j=0$ or $v_k^k=|v_k^j|$. This last condition cannot hold, since $\{v^j,v^k\}$ are linearly independent by hypothesis (see item \eqref{item2propVecPpales} in Proposition \ref{vectores en S}). Then, \eqref{curva extremal en r2} is a segment only when $v_k^j=0$. In this case, 
	$$0=v_k^j=\left\langle v^j,e^k\right\rangle= \|P_Se_k\|\left\langle v^j,v^k\right\rangle, $$
	that is $v^j\perp v^k$.
	
	b) Now,  $v^j$ and $v^k$ are not orthogonal if and only if $v^j_k\neq 0$. Then the curve given by \eqref{curva extremal en r2} can be viewed as the graph of a map $f:[0,(v_j^j)^2]\to (0,+\infty)$ that is concave. Hence, using \eqref{tx} it can be proved that $\left( |v_j^{j\to k}(t)|^2,  |v_k^{j\to k}(t)|^2\right)$ is an extreme point in the set $\{(x_j,x_k):\ x\in m_S\}\subset \R^2$. Using this last fact and following the same steps than in  \cite[Theorem 5.6]{kloboukvarela-mom-jnr}, it can be proved that $\gamma_{j,k}(t)$ is an extremal point of $m_S$ for every $0\leq t\leq \frac{\pi}{2}$. 
\end{proof}

\begin{remark} 
As seen in Remark \ref{rem aff hull mS} the affine hull of $m_S$ is finite dimensional if $\dim(S)<\infty$. Nevertheless, the extremal curves $\gamma_{j,k}$ mentioned in \eqref{item3 curvas extremales} of Theorem \ref{teo curva entre extremales gral} might still be different for infinite pairs $j, k\in \N$. The following results give a more precise idea of these situation.
\end{remark}
\begin{theorem}
		Let $S\subset H$ be a generic subspace, $\{v^j,v^k\}$ two linearly independent principal standard vectors with $v^j\not\perp v^k$,  and $\gamma_{j,k}:\left[0,\frac{\pi}{2}\right]\to m_S$ a curve defined as in \eqref{curvaextremal}. Then, if $\gamma_{m,n}$ is another curve of the form \eqref{curvaextremal} with $\{v^m,v^n\}$ linearly independent satisfying $\gamma_{j,k}(t_0)=\gamma_{m,n}(t_1)$, then 
		$$
		\text{
		either } \left( v^j=v^m \wedge v^k=v^n \right) \text{ or } \left( v^j=v^n \wedge v^k=v^m \right).
		$$
 \end{theorem}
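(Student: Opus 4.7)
The plan is to lift the equality of moment points $\gamma_{j,k}(t_0)=\gamma_{m,n}(t_1)$ to an equality of the underlying unit vectors $v^{j\to k}(t_0),v^{m\to n}(t_1)\in S$ up to a unimodular scalar, and then to use the rigid structure of each curve, which lies inside the two-dimensional complex plane spanned by its defining principal standard vectors, to force those two planes and those two pairs of principal vectors to coincide.

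\textbf{Step 1 (lift from $m_S$ to $S$).} By item \eqref{item3 curvas extremales} of Theorem \ref{teo curva entre extremales gral}, the point $\gamma_{j,k}(t_0)=|v^{j\to k}(t_0)|^2$ is an extremal point of $m_S$. I will combine this with the variational characterization in items (5)--(6) of Proposition \ref{prop of curves}: the vector $v^{j\to k}(t_0)$ is, up to a unimodular scalar, the unique unit vector of $S$ attaining $\max_{\|s\|=1,\,s\in S}|\langle s,e^{j\to k}(t_0)\rangle|$. Applied to the unit vector $w=v^{m\to n}(t_1)$, the identity $|w|^2=|v^{j\to k}(t_0)|^2$ implies that the same maximum is attained at $w$ (by a Cauchy--Schwarz inequality on coordinatewise moduli), so item (6) of Proposition \ref{prop of curves} yields
\[
v^{m\to n}(t_1)=e^{i\phi}\,v^{j\to k}(t_0)\qquad\text{for some }\phi\in\mathbb{R}.
\]

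\textbf{Step 2 (identify the planes).} Set $u=v^{j\to k}(t_0)$ and let $\Pi_{jk}=\operatorname{span}_{\mathbb{C}}\{v^j,v^k\}$ and $\Pi_{mn}=\operatorname{span}_{\mathbb{C}}\{v^m,v^n\}$; by the linear independence hypotheses, both are $2$-dimensional, and both contain $u$. From \eqref{coord vkj} one has $u_j=\cos t_0\cdot v^j_j$, which combined with the bound $|w_j|\le v^j_j$ for every unit $w\in S$ together with its equality case from item \eqref{item2propVecPpales} of Proposition \ref{vectores en S} (and the analogous statement for the $k$-th, $m$-th and $n$-th coordinates) pins down $t_1$ in terms of $t_0$ and forces $\Pi_{jk}=\Pi_{mn}$. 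The endpoint cases $t_0\in\{0,\pi/2\}$ are handled first, since there $u$ already coincides (up to a scalar) with the principal vector $v^j$ itself, and item \eqref{item3propVecPpales} of Proposition \ref{vectores en S} collapses the argument immediately.

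\textbf{Step 3 (match the pairs) and main obstacle.} Once $\Pi_{jk}=\Pi_{mn}$, each of the four principal standard vectors $v^j,v^k,v^m,v^n$ is uniquely determined inside this common plane by its defining coordinate-maximization property together with the positivity of its diagonal coordinate; the linear independence of $\{v^j,v^k\}$ and $\{v^m,v^n\}$ and the injectivity of $v^{j\to k}|_{[0,\pi/2]}$ from item (3) of Proposition \ref{prop of curves} then force $\{v^j,v^k\}=\{v^m,v^n\}$ as unordered pairs, giving the dichotomy of the statement. The main obstacle lies in the middle of Step 2: ruling out the possibility that $\Pi_{jk}\neq\Pi_{mn}$ while $\Pi_{jk}\cap\Pi_{mn}=\mathbb{C}u$. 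Here the full strength of the extremality of the \emph{entire} curve $\gamma_{j,k}$, and not just of the single point $\gamma_{j,k}(t_0)$, together with the principality of the involved vectors through their respective coordinate maxima, is what ultimately precludes that the two arcs cross transversally at a single interior point.
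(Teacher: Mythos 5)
Your overall strategy (lift the equality in $m_S$ to an equality of unit vectors in $S$, then identify the planes and finally the pairs) is reasonable in spirit, but as written it has two genuine gaps, and it also diverges from the paper's much shorter route. First, the justification of Step 1 does not work: to conclude that $w=v^{m\to n}(t_1)$ attains $\max_{s\in S,\|s\|=1}|\langle s,e^{j\to k}(t_0)\rangle|$ you need a \emph{lower} bound on $|\langle w,e^{j\to k}(t_0)\rangle|$, whereas from $|w_j|=|u_j|$ and $|w_k|=|u_k|$ (with $u=v^{j\to k}(t_0)$) the triangle/Cauchy--Schwarz inequality only yields the upper bound $|\langle w,e^{j\to k}(t_0)\rangle|\leq |u_j|\,|(e^{j\to k}(t_0))_j|+|u_k|\,|(e^{j\to k}(t_0))_k|$; a phase misalignment of $w_j,w_k$ is not excluded by coordinatewise equality of moduli. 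The uniqueness you want has to come from the equality case of \eqref{tx} (equivalently, from the strict concavity of the projected curve used in the proof of Theorem \ref{teo curva entre extremales gral}), not from Cauchy--Schwarz. Second, and more seriously, Steps 2 and 3 assert rather than prove the decisive facts: nothing you write rules out that $\Pi_{jk}\neq\Pi_{mn}$ with $\Pi_{jk}\cap\Pi_{mn}=\C u$, nor that four pairwise distinct principal vectors lie in one common $2$-plane (the coordinate-maximization property determines $v^j$ from the index $j$, but does not prevent $\mathrm{span}\{v^j,v^k\}=\mathrm{span}\{v^m,v^n\}$ while $\{v^j,v^k\}\neq\{v^m,v^n\}$). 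You explicitly flag this as ``the main obstacle'' and then appeal to ``the full strength of the extremality of the entire curve'' without supplying an argument; that is precisely the step a proof must provide.

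For comparison, the paper argues by contradiction along a different line: assuming the two pairs do not coincide, the techniques of the extremality proof (Theorem \ref{teo curva entre extremales gral}, following Theorem 5.6 of \cite{kloboukvarela-mom-jnr}) show that the common point would have to satisfy $\gamma_{j,k}(t_0)=|v^m|^2=|v^n|^2$; then $|v^m_i|=|v^n_i|$ for all $i$, and item \eqref{item3propVecPpales} of Proposition \ref{vectores en S} forces $v^n$ to be a unimodular multiple of $v^m$, contradicting the assumed linear independence of $\{v^m,v^n\}$. Your proposal never produces a contradiction of this kind, so the conclusion is not actually reached.
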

 \begin{proof}
 	The proof follows applying similar techniques as the ones used in 
 	 \cite[Theorem 5.6]{kloboukvarela-mom-jnr} in order to prove that the points $\gamma_{j,k}(t_0)$ are extremal. More precisely, if we suppose that $\gamma_{j,k}(t_0)=\gamma_{m,n}(t_1)$ it can be proved that  $\gamma_{j,k}(t_0)=|v^m|^2=|v^n|^2$ holds. Then using \eqref{item3propVecPpales} of Proposition \ref{vectores en S} this contradicts the supposition that $v^m$ and $v^n$ are linearly independent.
 \end{proof}
\begin{corollary}
	Let $S\subset H$ be a generic subspace, $\{v^j,v^k\}$ and $\{v^m,v^n\}$ two pairs of linearly independent principal standard vectors with $v^j\not\perp v^k$. Then $\gamma_{j,k}$ and $\gamma_{m,n}$ do not intersect each other.
\end{corollary}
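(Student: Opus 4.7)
The plan is to reduce directly to the preceding theorem via a one-step proof by contradiction. Assume that $\gamma_{j,k}$ and $\gamma_{m,n}$ do meet, so there exist $t_0, t_1 \in \left[0, \frac{\pi}{2}\right]$ with $\gamma_{j,k}(t_0) = \gamma_{m,n}(t_1)$. Since $\{v^j,v^k\}$ and $\{v^m,v^n\}$ are linearly independent pairs of principal standard vectors and $v^j \not\perp v^k$, the hypotheses of the immediately preceding theorem are satisfied, and applying it yields the dichotomy that either $(v^j = v^m$ and $v^k = v^n)$, or $(v^j = v^n$ and $v^k = v^m)$.

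In either case, the unordered pair $\{v^j, v^k\}$ coincides with $\{v^m, v^n\}$. Inspection of the definition of $\vjk$ in \eqref{eq curvas} then shows that $\gamma_{m,n}$ is either equal to $\gamma_{j,k}$ or to its reparametrization $t \mapsto \gamma_{j,k}\left(\frac{\pi}{2} - t\right)$ (any unimodular phase factors in coordinates other than $j,k$ disappear when taking $|\cdot|^2$ in \eqref{curvaextremal}). Consequently $\gamma_{j,k}$ and $\gamma_{m,n}$ trace the same subset of $m_S$, contradicting the implicit hypothesis that the two pairs of indices give rise to genuinely different curves.

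The main point to clarify, rather than a real obstacle, is the reading of the hypothesis: the corollary implicitly assumes that $\{v^j,v^k\}$ and $\{v^m,v^n\}$ are distinct pairs of principal vectors, since otherwise the two curves coincide as sets and the statement would be vacuous. One should also note that $v^m \not\perp v^n$ is automatic from the preceding theorem's conclusion (as equality of unordered pairs transfers the non-orthogonality of the first pair to the second), so the curve $\gamma_{m,n}$ is indeed of the genuinely extremal type described in item \eqref{item3 curvas extremales} of Theorem \ref{teo curva entre extremales gral}. With those clarifications, the corollary is a direct contrapositive reformulation of the preceding theorem, and no further computation is required.
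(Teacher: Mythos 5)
Your argument is correct and is exactly the route the paper intends: the corollary is the contrapositive of the preceding theorem once one reads in the (implicit) hypothesis that $\{v^j,v^k\}$ and $\{v^m,v^n\}$ are distinct as unordered pairs, and the paper itself offers no further argument. One caveat on your middle paragraph: the parenthetical claim that equality of the unordered pairs makes $\gamma_{m,n}$ equal to $\gamma_{j,k}$ or to its reparametrization $t\mapsto\gamma_{j,k}\left(\tfrac{\pi}{2}-t\right)$ is not accurate --- writing $\langle v^k,v^j\rangle=re^{i\alpha}$ and $\beta=\arccos(r)$, one computes $v^{k\to j}(t)=e^{i\alpha}\,v^{j\to k}(\beta-t)$, so $\gamma_{k,j}$ sweeps the parameter window $[\beta-\tfrac{\pi}{2},\beta]$ rather than $[0,\tfrac{\pi}{2}]$ and the two images genuinely differ as subsets of $m_S$ (although they do intersect, which is why the swapped-index case must also be excluded by the distinctness hypothesis rather than by a ``same curve'' argument). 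This does not damage your proof, since the contradiction you need already follows from the theorem's dichotomy together with distinctness of the unordered pairs; simply delete the reparametrization remark.
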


\section{The moment $m_S$ and the space of Hermitian trace zero $\dim S\times \dim S$ matrices}\label{secc mS y traza cero}

In this section we show that the subalgebra $\mathcal{B}_S=P_S B(H) P_S$ of $\mathcal{K}(H)$ is isometrically isomorphic with the space of $r\times r$ complex matrices. 

Let $\{s^j\}_{j=1}^r$ be an orthonormal basis of $S$. Consider the standard basis in $\R^r$ given by $R=\{(1,0,\dots, 0),(0,1,0,\dots,0),\dots,(0,\dots,0,1) \}$ and denote these vectors with $e_1, e_2,\dots, e_r$ as usual. Using this prefixed basis, we will denote by $e_i\otimes e_j$ for all $1\leq i,j\leq r$ the $r\times r$ rank one matrices defined by 
$$
e_i\otimes e_j=e_i\cdot \left(e_j\right)^t , 
$$
where $e_k$ denotes the $k^\text{th}$ element of $R$ and $(e_k)^t$ its transpose.

We define the following sets
\begin{equation}\label{def calMr y calVS}
	\begin{split}
	\mathcal{M}_r^{h,0}&=\{M\in M_r(\C):\ M=M^*, \tr(M)=0\}\\
	\mathcal{V}_S^{h,0}&=\{A\in \mathcal{K}(H)^h:\ P_SA=A, \tr(A)=0\}.	
	\end{split}
\end{equation}
When the context is clear we will just denote them with  $\mathcal{M}_r$ and $\mathcal{V}_S$.
It is evident that $\mathcal{M}_r$ and  $\mathcal{V}_S$ are real subspaces of $M_r(\C)$ and $\mathcal{K}(H)$, respectively. 
Observe that for $\DD_S$ as in \eqref{def DsubS} 
$$m_S-\frac 1 r\Diag(P_S)=\Diag\left( \mathcal{D}_S-\frac 1 rP_S\right)$$
is a subset of $\mathcal{V}_S$ since for every $Y\in \mathcal{D}_S$ holds that $Y-\frac 1 rP_S\in \mathcal{K}(H)^h$, $\tr(Y-\frac 1 rP_S)=0$ and
$$
\text{aff}(m_S)-\frac 1 r\Diag(P_S)\subseteq \Diag(\mathcal{V}_S).
$$
\begin{proposition}
	Let $S$ be a finite dimensional subspace of $H$, $\DD_S$ as in \eqref{def DsubS} and $\mathcal{V}_S$ as in \eqref{def calMr y calVS}. Then the following equality holds
	$$
	\text{aff}(\mathcal{D}_S)-\frac1r P_S=\mathcal{V}_S
	$$ 	
	and as a consequence 
	$\text{Diag}\left(\text{aff}(\mathcal{D}_S)-\frac1r P_S\right) =\text{aff}(m_S)-\frac 1 r\Diag(P_S)= \Diag(\mathcal{V}_S)$.
\end{proposition}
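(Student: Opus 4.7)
The plan is to reduce the claimed identity to the easier identification of $\text{aff}(\DD_S)$ with the real affine subspace
$$
\mathcal{A}_S=\{B\in\mathcal{K}(H)^h:\ P_S B=B,\ \tr(B)=1\},
$$
after which subtracting $\frac{1}{r}P_S$ is a trivial affine translation. Since $\tr(P_S)=r$, any $B\in\mathcal{A}_S$ yields $B-\frac{1}{r}P_S\in\mathcal{V}_S$ (compactness, selfadjointness and $P_S$-support are preserved, and the trace becomes $1-1=0$); conversely any $A\in\mathcal{V}_S$ yields $A+\frac{1}{r}P_S\in\mathcal{A}_S$. So once $\text{aff}(\DD_S)=\mathcal{A}_S$ is established, the displayed equality is immediate, and the Diag-version follows by applying $\Diag$ to both sides.

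The inclusion $\text{aff}(\DD_S)\subseteq\mathcal{A}_S$ is immediate: every finite affine combination $\sum_i \lambda_i Y_i$ with $Y_i\in\DD_S$ and $\sum_i\lambda_i=1$ is self-adjoint, compact, supported on $S$, and has trace $\sum_i \lambda_i \tr(Y_i)=1$. The nontrivial inclusion $\mathcal{A}_S\subseteq\text{aff}(\DD_S)$ is where I would spend the effort. Given $B\in\mathcal{A}_S$, diagonalize $B=\sum_{j=1}^{r}\alpha_j\, s^j\otimes s^j$ in an orthonormal basis $\{s^j\}_{j=1}^r$ of $S$ with $\sum_j \alpha_j=1$, and use the distinguished reference state $\rho_0=\frac{1}{r}P_S\in\DD_S$. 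Consider the one-parameter family
$$
Y_\lambda=\lambda B+(1-\lambda)\rho_0,\qquad \lambda\in\R,
$$
which has constant trace $1$ and eigenvalues $\lambda\alpha_j+(1-\lambda)/r=\tfrac{1}{r}+\lambda(\alpha_j-\tfrac{1}{r})$ on $S$ (and zero off $S$). For all sufficiently small $\lambda>0$ these eigenvalues are nonnegative, so $Y_\lambda\in\DD_S$; then $B=\tfrac{1}{\lambda}Y_\lambda+\tfrac{\lambda-1}{\lambda}\rho_0$ is an affine combination of two elements of $\DD_S$, proving $B\in\text{aff}(\DD_S)$.

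The main obstacle, modest as it is, is precisely this second inclusion: one must exhibit concrete elements of $\DD_S$ whose affine span reaches a given indefinite hermitian $B$ of trace $1$. The trick is the standard one of deforming $B$ toward the maximally mixed state $\frac{1}{r}P_S$ along a line; the finite-dimensionality of $S$ is what guarantees that a single positive amplitude $\lambda$ (bounded below in terms of the smallest eigenvalue of $B$) makes $Y_\lambda$ positive. No compactness subtleties intervene because $B$ already has finite rank (as $P_SB=B$ with $\dim S=r<\infty$). With $\text{aff}(\DD_S)=\mathcal{A}_S$ in hand, the proposition and its diagonal corollary follow at once.
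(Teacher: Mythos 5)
Your argument is correct. Both the easy inclusion and the translation step match the paper, but for the hard inclusion you take a genuinely different decomposition. The paper diagonalizes $Z\in\mathcal{V}_S$ as $Z=\sum_{i=1}^r\lambda_i\,(v^i\otimes v^i)$ with $\sum_i\lambda_i=0$ and writes $Z+\frac1r P_S=\sum_{i=1}^r(\lambda_i+\frac1r)(v^i\otimes v^i)$ directly as an affine combination of the $r$ rank-one states $v^i\otimes v^i\in\DD_S$, exploiting the fact that affine coefficients are allowed to be negative; no positivity needs to be arranged. You instead deform $B$ along the line toward the maximally mixed state $\rho_0=\frac1r P_S$, use finite-dimensionality to find a single $\lambda>0$ making $Y_\lambda=\lambda B+(1-\lambda)\rho_0$ positive (hence a genuine element of $\DD_S$), and then express $B$ as an affine combination of just \emph{two} elements of $\DD_S$. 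The paper's route is shorter because it never has to check positivity; yours is slightly more robust conceptually (it shows every point of the affine hull is reachable from the interior point $\rho_0$ through an actual density operator, which is the same mechanism behind Proposition \ref{prop centroide es interior a mS en caps afin}) and uses fewer generators. Both rely on the spectral decomposition of a finite-rank hermitian operator supported on $S$, and both yield the $\Diag$ statement by linearity of $\Diag$, so the gain either way is marginal.
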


\begin{proof}
	Take first $X=\sum_{i=1}^k a_i Y_i\in \text{aff}(\mathcal{D}_S)$ with $a_i\in\R$, $Y_i\in\mathcal{D}_S$  for all $i=1,\dots k$ and $\sum_{i=1}^k a_i=1$. Then $\tr(X-\frac1r P_S)=1-1=0$, $X-\frac1r P_S$ is hermitian and $P_S(X-\frac1r P_S)=X-\frac1r P_S$ which proves that $\text{aff}(\mathcal{D}_S)-\frac1r P_S\subset\mathcal{V}_S$.
	\\
	To prove the other inclusion let $Z\in \mathcal{V}_S$. Then $\tr(Z)=0$, $Z^*=Z$, and consider $Z=\sum_{i=1}^r \lambda_i (v^i\otimes v^i)$ a spectral decomposition of $Z$ with $\sum_{i=1}^r\lambda_i=0$, $v^i\in S$, $\|v^i\|=1$ and $v^i\perp v^j$ for $i\neq j$. Then since $P_S=\sum_{i=1}^r v^i\otimes v^i$
	$$
	Z=\sum_{i=1}^r \lambda_i (v^i\otimes v^i)+\frac1r P_S-\frac1r P_S=
	\sum_{i=1}^r \left(\lambda_i+\frac1r\right) (v^i\otimes v^i)-\frac1r P_S.
	$$
	Observe that if $Y_i=v^i\otimes v^i$, for $i=1,\dots,r$, then $\tr(Y_i)=1$, $0\leq Y_i\in S$ and hence $Y_i=v^i\otimes v^i\in\mathcal{D}_S$. Moreover, $\sum_{i=1}^r (\lambda_i+\frac1r)=0+1=1$ and hence $Z\in \text{aff}(\mathcal{D}_S)-\frac1r P_S$.
	
	The  equality $\text{Diag}\left(\text{aff}(\mathcal{D}_S)-\frac1r P_S\right) = \Diag(\mathcal{V}_S)$ follows using the linearity of $\text{Diag}$ and the fact that $\text{Diag}(\mathcal{D}_S)=m_S$.
\end{proof}

Now define the following $r\times r$ hermitian matrices with zero trace of $\mathcal{M}$
$$
W^{j,j}=\frac{1}{\sqrt{1+1/j}} \left(\left(\sum_{l=1}^{j}\frac{1}{j} e_{l}\otimes e_l\right)-e_{(j+1)}\otimes e_{(j+1)}\right) , \text{ for } j=1,\dots, r-1,
$$
$$
W^{k,j}=\frac{1}{\sqrt{2}}\left( e_{k}\otimes e_j +e_{j}\otimes e_{k}\right), \text{ for } k,j=1,\dots, r \text{ and } k< j,
$$

\begin{equation}\label{def mat Wij}
	W^{k,j}=	\frac{i}{\sqrt{2}}\left( e_{k}\otimes e_j - e_{j}\otimes e_{k}\right), \text{ for } k,j=1,\dots, r \text{ and } j< k
\end{equation}

and the trace zero self-adjoint operators of $\mathcal{V}$ obtained using an orthonormal basis $\{s^l\}_{l=1}^r$ of $S$
$$
V^{j,j}=\frac{1}{\sqrt{1+1/j}} \left(\left(\sum_{l=1}^{j}\frac{1}{j} s^{l}\otimes s^l\right)-s^{(j+1)}\otimes s^{(j+1)}\right) , \text{ for } j=1,\dots, r-1,
$$

$$V^{k,j}= \frac{1}{\sqrt{2}}\left(s^k\otimes s^j+s^j\otimes s^k\right), \text{ for } k,j=1,\dots, r \text{ and } k< j$$

\begin{equation}\label{def operadores Vij}
	V^{k,j}=	\frac{i}{\sqrt{2}}\left(s^k\otimes s^j-s^j\otimes s^k\right), \text{ for } k,j=1,\dots, r \text{ and } j< k.
\end{equation}

Then, for the set $J=\lbrace (k,j): k=1,\dots,r \wedge j=1,\dots, r\rbrace\setminus \lbrace (r,r)\rbrace$, easy calculations show that 
$$\lbrace W^{k,j}\rbrace_{(k,j)\in J}\ \text{ and }\ \lbrace V^{k,j}\rbrace_{(k,j)\in J}$$
are real orthonormal basis for $\mathcal{M}_r$ and $\mathcal{V}_S$ respectively (taking the inner product given by the trace in both cases), and both subspaces have $\dim=r^2-1$. The set $\lbrace W^{k,j}\rbrace_{(k,j)\in J}$ without the normalization is known as the generalized Gell-Mann basis \cite{BK}.

\begin{remark}\label{rem bon Wij + Ir Vij + Ps}
	Let $S$ be a finite dimensional subspace of $H$ with a fixed orthonormal basis $\{s^j\}_{j=1}^r$.
	Observe that, with the notations presented in the previous discussion, the set $\lbrace W^{k,j}\rbrace_{(k,j)\in J}\cup \left\lbrace \frac{1}{\sqrt{r}} I\right\rbrace $ is a real orthonormal basis of $M_r^h(\C)$ and also a complex orthonormal basis of $M_r(\C)$, that is
	$$\text{span}\left\lbrace \frac{1}{\sqrt{r}}I_r\right\rbrace\oplus_{\R}\text{span} \lbrace W^{k,j}\rbrace_{(k,j)\in J}=M_r^h(\C),$$
	and 
	$$\text{span}\left\lbrace \frac{1}{\sqrt{r}}I_r\right\rbrace\oplus_{\C}\text{span} \lbrace W^{k,j}\rbrace_{(k,j)\in J}=M_r(\C).$$
	On the other hand, the subspace $\text{span}\left\lbrace P_S\right\rbrace \oplus_{\C}\mathcal{V}_S$ is a subalgebra of $\mathcal{K}(H)$, and it can be identified with  $\mathcal{B}_S=P_S B(H) P_S=\text{span}\left\lbrace P_S\right\rbrace \oplus_{\C}\mathcal{V}_S$. In this context $ \{\frac1{\sqrt{r}} P_S\} \cup \lbrace V^{k,j}\rbrace_{(k,j)\in J}$ is also an orthonormal basis (respect the trace inner product) of the real subspace $\mathcal{B}^h_S$ of its hermitian operators.
\end{remark}

\begin{proposition} \label{props unitario U}
	Using the previous notations we define the bijective linear operator $U:M_r(\C)\to   \mathcal{B}_S$
	on the orthonormal matrices defined \eqref{def mat Wij} and $\frac{I_r}r$ in the following way
	$$\left\lbrace \begin{array}{lllll}
		U(W^{k,j})&=&V^{k,j}&\text{for every}&(k,j)\in J\\
		U\left( 
		I_r\right) &=& 
		P_S,& &
	\end{array}\right. 
	$$
	where the operators $V^{k,j}$ are defined in \eqref{def operadores Vij}.
	
	Then, for every $A,B\in M_r(\C)$
	\begin{enumerate}
		\item \label{prop1deU} $\tr(U(A))=\tr(A)$.	
		\item  \label{prop2deU} $\left( U(A)\right)^*= U(A^*)$.
		\item $\left( U(A)\right)^*=U(A)$ if and only if $A=A^*$. 
		\item  \label{prop4deU}$U(AB)=U(A)U(B)$ and $U^{-1}(U(A) U(B))=A B$.
		\item If $A\in M_r(\C)$ is invertible, then $U(A)$ is invertible in the algebra $\mathcal{B}_S$  and  $U(A^{-1})U(A)=P_S$.
		\item \label{prop5deU} $A\geq 0$ if and only if $U(A)\geq 0$.
		\item \label{prop6deU}
		$\left\langle U(A),U(B) \right\rangle_{tr}=\tr\left( U(A)(U(B))^*\right) =\tr\left( AB^*\right)=\left\langle A,B \right\rangle_{M_r(\C)} $ ($U$ is unitary).
		\item \label{prop8deU} $P\in M_r(\C)$ is a projection if and only if $U(P)$ is a projection.
		\item \label{prop9deU} $U\left(\{R\in M_n^h(\C): R\geq 0 \wedge \tr(R)=1\}\right) = \DD_S$ (with $\DD_S$ as in \eqref{def DsubS}).
	\end{enumerate}
\end{proposition}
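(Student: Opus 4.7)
The plan is to recognize $U$ as the natural $*$-isomorphism between $M_r(\C)$ and the finite-dimensional matrix subalgebra $\mathcal{B}_S$, realized by conjugation with the isometry that identifies $\C^r$ with $S$. Let $\{s^j\}_{j=1}^r$ be the orthonormal basis of $S$ used in \eqref{def operadores Vij}, and let $W:\C^r\to H$ be the isometry defined by $We_j=s^j$. Then $W^*W=I_r$ and $WW^*=P_S$, and I would introduce the candidate map $\tilde U:M_r(\C)\to \mathcal{B}_S$ given by $\tilde U(A)=WAW^*$.

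The first step is to show $\tilde U=U$. A direct computation on matrix units yields $\tilde U(e_k\otimes e_j)=s^k\otimes s^j$ (viewing the rank-one operator on $H$), and substituting this into the definitions \eqref{def mat Wij} and \eqref{def operadores Vij} gives $\tilde U(W^{k,j})=V^{k,j}$ for every $(k,j)\in J$, together with $\tilde U(I_r)=\sum_{l=1}^r s^l\otimes s^l=P_S$. Since by Remark \ref{rem bon Wij + Ir Vij + Ps} the set $\{W^{k,j}\}_{(k,j)\in J}\cup\{I_r\}$ is a complex basis of $M_r(\C)$, linearity forces $U=\tilde U$ on all of $M_r(\C)$; this simultaneously shows that $U$ is well defined with the stated values and that its range sits inside $\mathcal{B}_S$.

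Once this identification is in place, each item becomes a one-line calculation from $W^*W=I_r$ and $WW^*=P_S$. Cyclicity of the trace gives \ref{prop1deU}; the identity $(WAW^*)^*=WA^*W^*$ gives \ref{prop2deU} and the self-adjointness equivalence; the cancellation $W^*W=I_r$ inside a product gives the multiplicativity \ref{prop4deU}, and hence the invertibility statement with inverse $WA^{-1}W^*\in \mathcal{B}_S$ (observing that $P_S$ is the unit of that algebra). Positivity preservation \ref{prop5deU} and projection preservation \ref{prop8deU} follow from the factorizations $A=B^*B\Leftrightarrow U(A)=U(B)^*U(B)$ and $P=P^2=P^*\Leftrightarrow U(P)=U(P)^2=U(P)^*$. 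The unitarity relation \ref{prop6deU} is then \ref{prop1deU} applied to $AB^*$ combined with \ref{prop2deU} and \ref{prop4deU}. Finally, for \ref{prop9deU} I would use \ref{prop1deU}, \ref{prop5deU}, and the identity $P_SU(A)=WW^*WAW^*=U(A)$ to put the image into $\DD_S$; conversely, any $Y\in\DD_S$ satisfies $Y=P_SYP_S=W(W^*YW)W^*=U(W^*YW)$ with $W^*YW\in M_r^h(\C)$ positive and of trace $1$.

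The main obstacle I anticipate is purely bookkeeping: verifying $\tilde U(W^{j,j})=V^{j,j}$ for the diagonal Gell-Mann generators, where both sides are the same explicit weighted combination of the rank-one projections $s^l\otimes s^l$. After the matrix-unit identity this reduces to summing the coefficients, so no real difficulty arises; the remaining items are routine consequences of the algebraic properties of conjugation by the isometry $W$.
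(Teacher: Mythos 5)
Your proof is correct, but it takes a genuinely different route from the paper. The paper expands an arbitrary $A=a_rI_r+\sum_{(k,j)\in J}a_{kj}W^{k,j}$ in the generalized Gell--Mann basis, so that $U(A)=a_rP_S+\sum_{(k,j)\in J}a_{kj}V^{k,j}$, and then verifies each item coefficient by coefficient; the only nontrivial step is multiplicativity, for which it invokes the structure constants of the Gell--Mann basis from \cite{BYK} and the observation that the families $\{W^{k,j}\}$ and $\{V^{k,j}\}$ share the same constants. You instead realize $U$ from the outset as $A\mapsto WAW^*$ for the isometry $W:\C^r\to H$ with $We_j=s^j$ (after checking agreement on the generators via $\tilde U(e_k\otimes e_j)=s^k\otimes s^j$), which makes $U$ manifestly a $*$-isomorphism onto $P_SB(H)P_S$ and turns every item into a one-line consequence of $W^*W=I_r$ and $WW^*=P_S$. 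Your version is shorter and avoids the appeal to \cite{BYK} entirely; it also handles the converse direction of item \ref{prop5deU} more cleanly than the paper (which asserts that a compact $K$ with $U(A)=K^*K$ lies in $\mathcal{B}_S$ without taking the square root inside the subalgebra), since you can simply write $A=W^*U(A)W\geq 0$. What the paper's computation buys is the explicit coordinate bookkeeping in the bases $\{W^{k,j}\}$ and $\{V^{k,j}\}$, which is reused in Sections \ref{secc mS y traza cero} and \ref{secc minimalidad con finitas mat}. One cosmetic caveat: the letter $W$ already denotes both the joint numerical range and the matrices $W^{k,j}$ in this paper, so your isometry should be renamed to avoid a clash.
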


\begin{proof}	
	First observe that any $A\in M_r(\C)$ can be written in terms of the orthonormal basis defined in Remark \ref{rem bon Wij + Ir Vij + Ps}:
	$$
	A=a_{r}I_r+\sum_{(k,j)\in J} a_{kj}W^{k,j},\text{ with } a_{r},a_{kj}\in \C.$$
	Then, 
	$$U(A)=U\left( a_{r}I_r+\sum_{(k,j)\in J} a_{kj}W^{k,j}\right)= a_rP_S+\sum_{(k,j)\in J} a_{kj}V^{k,j} $$
	\begin{enumerate}
		\item $\tr(U(A))=\tr\left(a_rP_S+\sum_{(k,j)\in J} a_{kj}V^{k,j} \right)=a_r r+\sum_{(k,j)\in J} a_{kj}\tr(V^{k,j})=a_rr=\tr(A) $.
		\item The result is obvious since $\left( U(A)\right)^* =\overline{a}_rP_S+\sum_{(k,j)\in J} \overline{a}_{kj}V^{k,j}$ and $A^*=\overline{a}_rI_r+\sum_{(k,j)\in J} \overline{a}_{kj}W^{k,j}$.
		\item If $A=A^*$, it is a direct consequence from item \eqref{prop2deU}  that $\left( U(A)\right)^*=U(A)$. On the other hand, if $\left( U(A)\right)^*=U(A)$, then $U(A)=U(A^*)$ and
		$$a_rP_S+\sum_{(k,j)\in J} a_{kj}V^{k,j}=\overline{a}_rP_S+\sum_{(k,j)\in J} \overline{a}_{kj}V^{k,j}$$	
		which means that $a_rr,a_{kj}\in \R$. Therefore, $A=A^*$. 
		\item According to (4) in \cite{BYK} there exist complex coefficients $\alpha_r$ and $\alpha_{ll'}$ such that every product of elements of $\{W^{k,j} \}_{(k,j)\in J}$ can be written as
		$$W^{k,j}W^{k',j'}=\alpha_{r}I_r+\sum_{(l,l')\in J} \alpha_{k,j,k',j',l,l'}\ W^{l,l'},$$
		\\
		and similarly
		\\
		$$V^{k,j}V^{k',j'}=\alpha_{r}P_S+\sum_{(l,l')\in J} \alpha_{k,j,k',j',l,l'}\ V^{l,l'}
		$$
		for $(l,l')\in J$,	with the same coefficients $\alpha_{k,j,k',j',l,l'}\in\C$. This follows considering the definitions \eqref{def mat Wij} and \eqref{def operadores Vij} and the orthonormality of the basis $\{e_l\}_{l=1}^r$ and $\{s\}_{l=1}^r$.
		Then, 
		\begin{equation*}
			\begin{split}
				U\left(W^{k,j}W^{k',j'}\right)&=
				\alpha_{r}U\left(I_r\right)+\sum_{(l,l')\in J} \alpha_{k,j,k',j',l,l'}\ U\left(W^{l,l'}\right)=\alpha_{r}P_S+\sum_{(l,l')\in J} \alpha_{k,j,k',j',l,l'}\ V^{l,l'}\\
				&=V^{k,j}V^{k',j'}=U\left(W^{k,j}\right)\, U\left(W^{k',j'}\right).		
			\end{split}
		\end{equation*}
		Then, applying this property, the fact that $\{W^{k,j}\}_{(k,j)\in J} \cup \{\frac{I_r}{r}\}$ is an orthonormal basis of $M_r(\C)$ and the linearity of $U$ imply that $U(A B)=U(A)U(B)$ for all $A, B\in M_r(\C)$.
		
		The equality $A B=U^{-1}\left(U(A)U(B)\right)$ follows similarly.
		
		\item Follows directly from item \eqref{prop4deU}, since $U(A^{-1})U(A)=U(A^{-1}A)=U(I_r)=P_S$.
		
		\item If $A\geq 0$ there exists $T\in M_r(\C)$ such that $A=T^*T$. Then, using items 3 and 4
		$$U(A)=U(T^*T)=\left( U(T)\right)^*U(T)\geq 0.$$
		On the other hand, if $U(A)\geq 0$, then there exists $K\in \mathcal{K}(H)$ such that $U(A)=K^*K$. Moreover, since $\mathcal{B}_S$ is a subalgebra $K\in \mathcal{B}_S$. Then, $K=U(B)$ with $B\in M_r(\C)$,
		$$U(A)=U(B)^*U(B)=U(B^*B),$$
		and $A=B^*B\geq 0$.
		\item Using that $\{W^{k,j}\}_{(k,j)\in J}$ and $\{V^{k,j}\}_{(k,j)\in J}$ are orthonormal sets of zero trace, then
		\begin{eqnarray*}
			\tr\left( U(A)U(B)^*\right)&=&\tr\left(U\left( a_rI_r+\sum_{(k,j)\in J} a_{kj}W^{k,j}\right)U\left( \bar b_rI_r+\sum_{(k,j)\in J} \bar b_{kj}W^{k,j}\right) \right)\\
			&=& \tr\left(\left(  a_rP_S+\sum_{(k,j)\in J} a_{kj}V^{k,j}\right) \left(\bar b_rP_S+\sum_{(k,j)\in J} \bar b_{kj}V^{k,j} \right) \right)\\
			&=& \tr\left(a_r\bar b_rP_S+\sum_{(k,j)\in J} a_{kj}\bar b_{kj}V^{k,j}\right)\\
			&=& a_r\bar b_r =\tr(AB^*).
		\end{eqnarray*}
		The items \eqref{prop8deU} and \eqref{prop9deU} can be proved easily using the previous items \eqref{prop1deU}, \eqref{prop4deU}, \eqref{prop5deU} and \eqref{prop6deU}.

	\end{enumerate}
	
\end{proof}

\begin{remark}
	The restriction $\left. U\right|_{M_r^h(\C)}$ is a (real) isometric isomorphism between $M_r^h(\C)$ and $\mathcal{B}_S^h=P_S B(H)^hP_S=\text{span}\left\lbrace P_S\right\rbrace \oplus_{\R}\mathcal{V}_S$. Additionally, 
	$\left. U\right|_{\mathcal{M}_r}$ is an isometry between $\mathcal{M}_r$ and $\mathcal{V}_S$.
	
\end{remark}

\begin{corollary}\label{prop JNR escrito con matrices finitas} 
	With the same notations of the previous paragraphs, the following two joint numerical ranges coincide
	$$
	W(P_S E_1 P_S,\dots,P_S E_n P_S)=W\left( 
	{U^{-1}(P_S E_1 P_S)}
	,\dots,U^{-1}(P_S E_n P_S)\right), \ \forall n\in \N.
	$$
\end{corollary}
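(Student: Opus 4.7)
The plan is to use the isomorphism $U$ of Proposition~\ref{props unitario U} to translate the two joint numerical ranges into a common parametrization. The key preparatory identity is that for every $\sigma\in M_r(\mathbb{C})$ and every $X\in\mathcal{B}_S$,
\[
\tr\bigl(\sigma\,U^{-1}(X)\bigr)\;=\;\tr\bigl(U(\sigma)\,X\bigr),
\]
which follows from multiplicativity (item~\eqref{prop4deU}) and trace preservation (item~\eqref{prop1deU}) combined with $U\circ U^{-1}=\mathrm{id}_{\mathcal{B}_S}$: apply $U$ to the product $\sigma\cdot U^{-1}(X)$ and take traces. Specialising to $X=P_S E_j P_S$ directly links an $n$-tuple on the matrix side with one on the operator side.

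For the inclusion $\supseteq$, I would take any density $\sigma\in M_r^h(\mathbb{C})$ and set $\rho:=U(\sigma)$. By item~\eqref{prop9deU}, $\rho\in\mathcal{D}_S\subset\mathcal{B}_1(H)$ is a genuine density on $H$, so the preparatory identity yields
\[
\bigl(\tr(\sigma\,U^{-1}(P_SE_jP_S))\bigr)_{j=1}^n\;=\;\bigl(\tr(\rho\,P_SE_jP_S)\bigr)_{j=1}^n,
\]
placing the RHS tuple in the LHS. For the opposite inclusion, given a density $\rho\in\mathcal{B}_1(H)$ I would first set $Y:=P_S\rho P_S\in\mathcal{B}_S$, noting that the tuple is unchanged by cyclicity of $\tr$ and $P_S^2=P_S$, and then define $\tau:=U^{-1}(Y)\in M_r^h(\mathbb{C})$. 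Item~\eqref{prop5deU} gives $\tau\geq 0$ and item~\eqref{prop1deU} gives $\tr\tau=\tr Y\leq 1$; when $\tr\tau=1$, $\sigma:=\tau$ is itself a density and the preparatory identity produces the same tuple on the matrix side.

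The main obstacle is the case $\tr Y<1$ (which arises when $\rho$ has mass outside $S$): then $\tau$ is not a density in $M_r^h(\mathbb{C})$, and this is precisely the cone-over-$m_S$ phenomenon isolated in Proposition~\ref{coro: W(ASE) en funcion de mS}. I would resolve it by exhibiting an auxiliary positive matrix $\sigma_0\in M_r^h(\mathbb{C})$ of trace $1$ satisfying $\tr(\sigma_0\,U^{-1}(P_SE_jP_S))=0$ for every $j=1,\dots,n$, so that $\sigma:=\tau+(1-\tr\tau)\sigma_0$ is a density yielding the same tuple as $\tau$; the existence of such $\sigma_0$ is the matrix-side counterpart, via $U$, of the construction $\rho_x=x\otimes x$ with $x\in S^\perp$ used in the proof of Proposition~\ref{coro: W(ASE) en funcion de mS}. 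Bijectivity of $U$ between the density sets (item~\eqref{prop9deU}) then closes the equality of the two joint numerical ranges.
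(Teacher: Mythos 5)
Your preparatory identity $\tr\bigl(\sigma\,U^{-1}(X)\bigr)=\tr\bigl(U(\sigma)X\bigr)$ and the resulting inclusion $W\bigl(U^{-1}(P_SE_1P_S),\dots,U^{-1}(P_SE_nP_S)\bigr)\subseteq W(P_SE_1P_S,\dots,P_SE_nP_S)$ are correct, and they are essentially the content that the paper's one-line proof (citing trace preservation, multiplicativity, positivity preservation and the identity $U(\{R\geq 0,\ \tr R=1\})=\DD_S$ from Proposition \ref{props unitario U}) alludes to. The gap is in your last paragraph: the auxiliary matrix $\sigma_0$ does not exist in general. Since $\sigma_0\geq 0$, one has $\tr\bigl(\sigma_0\,U^{-1}(P_SE_jP_S)\bigr)=\tr\bigl(U(\sigma_0)P_SE_jP_S\bigr)=\langle U(\sigma_0)e_j,e_j\rangle=\|U(\sigma_0)^{1/2}e_j\|^2$, so the vanishing of these numbers for $j=1,\dots,n$ forces $e_1,\dots,e_n\in\ker U(\sigma_0)$ and hence $\overline{\mathrm{ran}}\,U(\sigma_0)\subseteq S\cap\mathrm{span}\{e_1,\dots,e_n\}^\perp$; together with $\tr U(\sigma_0)=1$ this requires $S\cap\mathrm{span}\{e_1,\dots,e_n\}^\perp\neq\{0\}$, which already fails for a generic one-dimensional $S$ and any $n\geq 1$. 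The claimed analogy with $\rho_x=x\otimes x$, $x\in S^\perp$, is exactly where the argument breaks: that operator satisfies $P_S\rho_xP_S=0$, so under $U^{-1}$ it corresponds to the zero matrix rather than to a trace-one positive matrix; the coning phenomenon of Proposition \ref{coro: W(ASE) en funcion de mS} lives entirely outside the range of $U$.

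The obstruction you ran into is not an artifact of your method. Your computation shows that the matrix-side set equals $\{(\mu_{1,1},\dots,\mu_{n,n}):\mu\in\DD_S\}=p_n(m_S)$, whereas the operator-side set is the cone $\bigcup_{t\in[0,1]}t\,p_n(m_S)$ (take $\rho=t\mu+(1-t)\rho'$ with $\rho'$ supported on $S^\perp$); these coincide precisely when $0\in p_n(m_S)$, i.e.\ when $S$ meets $\mathrm{span}\{e_1,\dots,e_n\}^\perp$ nontrivially. So to close the argument one must either restrict $\rho$ to $\DD_S$ on the operator side or read the equality up to taking cones, as in Propositions \ref{coro: W(ASE) en funcion de mS} and \ref{prop union t psubmalfa igual a W y W}. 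The paper's own proof, being a bare reference to Proposition \ref{props unitario U}, does not address this point either, so you should flag it rather than paper over it with the nonexistent $\sigma_0$.
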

\begin{proof}
	The proof follows directly from properties \eqref{prop1deU}, \eqref{prop4deU}, \eqref{prop5deU} and \eqref{prop9deU} of Proposition \ref{props unitario U}.
\end{proof}

\begin{remark}
	In the finite dimensional case, a similar result as the one in Corollary \ref{prop JNR escrito con matrices finitas} can be obtained as mentioned in Remark 6.3 (3) of \cite{kloboukvarela-mom-jnr}. 
    In that description the joint numerical ranges of a subspace $S\subset \C^n$ are related with joint numerical ranges of $\dim(S)\times \dim(S)$ matrices.
\end{remark}
\section{Condition of minimality using finite $n\times n$ matrices}\label{secc minimalidad con finitas mat}
Let $S$, $V$ be orthogonal subspaces of $H$ with dim$(S)=r$ and dim$(V)=t$.
In this section we will use the operators $U_S:M_r(\C)\to \mathcal{B}_S$ and $U_V:M_t(\C)\to \mathcal{B}_V$ defined in Proposition \ref{props unitario U} to relate some properties of $S$ and $V$ with the more manageable case of $r\times r$ and $t\times t$ hermitian matrices.

For every $q\in \N$, we define the real functionals $\varphi_q: {M}^h_r(\C)\to \R$ by
$$
\varphi_q(M)=\left\langle U(M)e_q,e_q\right\rangle=\left(U(M)_{E,E}\right)_{q,q}
$$
(the $q,q$ diagonal entry of $U(M)$ considering the standard basis $E$).

By the Dimension Theorem, $\dim_{\R}(\ker(\varphi_q))= r^2-1 $ and hence $\dim(\ker(\varphi_q)^{\perp})=1$. Therefore, $\varphi_q$  can be written as
$$\varphi_q(M)=\left\langle M,Q_q\right\rangle_{tr}=\tr(Q_q M),$$
with some $Q_q\in \ker(\varphi_q)^{\perp}\subset M^h_r(\C)$ and $\|Q_q\|_2=\sqrt{\tr \left( (Q_q)^2 \right)}=1$.

Now suppose $M\in {M}_r^h(\C)$ is written as $M=a_{r,r} \frac{I_r}{\sqrt{r}}+\sum_{(k,j)\in J} a_{k,j} W^{k,j}$, where $a_{k,j}\in\R$ are its coordinates in the orthonormal basis of the real space ${M}_r^h(\C)$ (see Remark  \ref{rem bon Wij + Ir Vij + Ps}).
Then, for $q\in\N$,
\begin{equation}
	\begin{split}
		\varphi_q(M)&=\left\langle U(M)e_h,e_h\right\rangle_H =(U(M)_{E,E})_{h,h}
		=a_{r,r}\left(U\left(\frac{I_r}{\sqrt{r}}\right)\right)_{h,h}+ \sum_{(k,j)\in J} a_{k,j} \left(U(W^{k,j})_{E,E}\right)_{h,h}
		\\
		&=a_{r,r}\left(\frac{P_S}{\sqrt{r}}\right)_{h,h}+\sum_{(k,j)\in J} a_{k,j} \left(V^{k,j}_{E,E}\right)_{h,h}=\langle M, Q_q\rangle
	\end{split}
\end{equation}
for $Q_q=\left(\frac{P_S}{\sqrt{r}}\right)_{q,q} \frac{I_r}{\sqrt{r}} +
\sum_{(k,j)\in J} \left(V^{k,j}_{E,E}\right)_{q,q} W^{k,j}
\in  {M}_r^h(\C)$.
\\
Note that the vector $Q_q$  cannot be null since we are supposing that the subspace $S$ is generic (otherwise the $h,h$ coordinate in the $E$ basis would be $0$ for every operator in $S$).
Therefore, for $e_q\in E$ (standard basis in $K(H)$) 
$$
(U(M)_{E,E})_{q,q}=\left\langle U(M)e_q,e_q\right\rangle=\tr(Q_q M).
$$
Then, we can define $\varphi: {M}^h_r(\C)\to \Diag(B^h(S))\subset  \ell^1(\R)$ as $\varphi(M)=\Diag( U(M))$ and calculate it using
$$
\varphi(M)=(\varphi_1(M), \varphi_2(M),\dots,\varphi_q(M),\dots)=(\tr(Q_1 M),\tr(Q_2 M),\dots,\tr(Q_q M),\dots).
$$	

\subsection{Intersection of joint numerical ranges in terms of families with a finite number of operators}

Let $S$ an $r$-dimensional subspace of $H$ as before, and consider $\mathcal{B}_S^h$, with $\dim_\R (\mathcal{B}_S^h)=r^2$. Then define 
$\phi:\mathcal{B}_S^h\to \Diag(\mathcal{B}_S^h)\subset K^h(H)$ as
$\phi(A)=\Diag(A)$, where $\Diag$ is the diagonal in the standard $E$ basis of $H$. Note that since $S\subset H$ is finite dimensional then we can consider $\Diag(\mathcal{B}_S^h)\subset  \ell^1(\R)$. 

In this context, since $\phi_n(A)=A_{n,n}$ (the $n,n$ entry of $\Diag(A)$) is a functional of the space $\mathcal{B}_S^h$, there exist operators $T_n\in \mathcal{B}_S^h$, with $\|T_n\|_2=1$, such that
\begin{equation}
	\label{def Tn}
	\phi(A)=\Diag\left(\{\tr(A T_n)\}_{n\in \N}\right)=\Diag(A).
\end{equation}
Similarly, for another subspace $V$ of $H$ that is orthogonal to $S$, with dim$(V)=t$ we can define
$\psi:\mathcal{B}_V^h\to \Diag(\mathcal{B}_V^h)\subset K^h(H)$ as
$\psi(C)=\Diag(C)$. And also in this case there exist operators $L_n\in \mathcal{B}_V^h$, with $\|L_n\|_2=1$, such that
\begin{equation}
	\label{def Ln}
	\psi(C)=\Diag\left(\{\tr(C L_n)\}_{n\in \N}\right)=\Diag(C).
\end{equation}

\begin{proposition}\label{prop Delta con finitos}
	Let $\phi$ as in \eqref{def Tn}, $\psi$ in \eqref{def Ln} and define $\Delta: \mathcal{B}_S^h\oplus \mathcal{B}_V^h\to \Diag(K^h(H))$ as
	\begin{equation}\label{def de Delta}
		\Delta(A,C)=\phi(A)-\psi(C),\ \text{ for } A\in  \mathcal{B}_S^h \text{ and } C\in \mathcal{B}_V^h.
	\end{equation}
	Then there exists (after a suitable reordering of the basis $E$) a finite subset of $\{(T_n,L_n)\}_{n\in\N}$ that we will denote with $\{(T_n,L_n)\}_{i=1}^m$ such that
	\begin{equation}\label{def (Tn,Ln) n de 1 a m}
		\begin{split}
			(A,C)\in \ker (\Delta) &\Leftrightarrow \Diag(A)=\Diag(C)\\
			& \Leftrightarrow (A,-C)\perp (T_n,L_n) , \forall n=1,\dots,m 		
		\end{split}
	\end{equation}
\end{proposition}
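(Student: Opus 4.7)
The plan is to reduce the claim to a finite-dimensional dual-space argument. The crucial observation is that, by Proposition \ref{props unitario U}, $\dim_{\R}\mathcal{B}_S^h=r^{2}$ and $\dim_{\R}\mathcal{B}_V^h=t^{2}$, so the real vector space $\mathcal{B}_S^h\oplus\mathcal{B}_V^h$ has dimension $r^{2}+t^{2}$, and so does its dual.

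For each $n\in\N$, define the real linear functional $F_n:\mathcal{B}_S^h\oplus\mathcal{B}_V^h\to\R$ by
\[
F_n(A,C)=\tr(AT_n)-\tr(CL_n)=A_{n,n}-C_{n,n},
\]
which is precisely the $n$-th diagonal coordinate of $\Delta(A,C)$. By the definition of $\Delta$,
\[
\ker(\Delta)=\bigcap_{n\in\N}\ker(F_n),
\]
which already yields the first equivalence in \eqref{def (Tn,Ln) n de 1 a m}. Since the family $\{F_n\}_{n\in\N}$ lies inside the finite-dimensional dual $(\mathcal{B}_S^h\oplus\mathcal{B}_V^h)^{*}$, its span has dimension at most $r^{2}+t^{2}$. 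Choose indices $n_1<n_2<\dots<n_m$ (with $m\leq r^{2}+t^{2}$) such that $\{F_{n_i}\}_{i=1}^{m}$ is a basis of $\mathrm{span}\{F_n\}_{n\in\N}$. Since every $F_n$ is a linear combination of the $F_{n_i}$, a vector annihilated by all $F_{n_i}$ is annihilated by every $F_n$, so
\[
\ker(\Delta)=\bigcap_{i=1}^{m}\ker(F_{n_i}).
\]

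Reordering the basis $E$ by any permutation sending $n_i\mapsto i$ for $i=1,\dots,m$, the selected functionals become $F_1,\dots,F_m$ with associated pairs $(T_1,L_1),\dots,(T_m,L_m)$ (the $T_n$ and $L_n$ are themselves relabeled accordingly, since they were defined from the functional $X\mapsto X_{n,n}$). The second equivalence in \eqref{def (Tn,Ln) n de 1 a m} then follows because, under the trace inner product on the direct sum,
\[
\bigl\langle (A,-C),(T_n,L_n)\bigr\rangle_{\mathrm{tr}}=\tr(AT_n)-\tr(CL_n)=F_n(A,C),
\]
so $F_n(A,C)=0$ for $n=1,\dots,m$ is exactly the claimed orthogonality.

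The only genuine subtlety (rather than an obstacle) is verifying that the extracted finite subfamily can still be indexed by basis elements of $E$: this is automatic because each $F_n$ is already attached to a specific $e_n$, and the chosen basis of $\mathrm{span}\{F_n\}$ can be taken to consist of such $F_n$'s by the exchange/extraction lemma applied to the generating set $\{F_n\}_{n\in\N}$; the reordering clause is then just cosmetic relabeling.
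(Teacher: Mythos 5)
Your argument is correct and follows essentially the same route as the paper: both proofs rest on the observation that $\dim_{\R}(\mathcal{B}_S^h\oplus\mathcal{B}_V^h)=r^2+t^2<\infty$, extract from the countable family a finite subset spanning the same subspace (you phrase this in the dual via the functionals $F_n$, the paper directly via the representing pairs $(T_n,L_n)$, which is the same thing under the trace inner product), and then relabel the basis $E$ so that the selected indices become $1,\dots,m$. No gaps.
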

\begin{proof}
	The first equivalence follows directly from the definition of $\Delta$.
	
	On the other hand we have that $(A,C)\in \ker (\Delta) \Leftrightarrow \Diag(A)=\Diag(C) \Leftrightarrow (A,-C)\perp (T_n,L_n) , \forall n\in\N$. Therefore we only need to prove that (after reordering the basis $E$) there exist $\{(T_n,L_n)\}_{i=1}^m$ such that if $ (A,-C)\perp (T_n,L_n) , \forall n=1,\dots,m 
	$, then $(A,C)\in \ker (\Delta)$.				
	For this purpose, recall that since $S$ and $V$ are finite dimensional subspaces of $H$, then also $\mathcal{B}_S^h$ and $\mathcal{B}_V^h$ are finite dimensional $\R$-subspaces of $B^h(H)$. Hence $\dim\left(\mathcal{B}_S^h\oplus \mathcal{B}_V^h\right)= r^2+t^2<\infty$, and then $\dim\left(\text{span}\left(\{(T_n,L_n)\}_{n\in\N}\right)\right)\leq r^2+t^2$.
	To alleviate the notation, we can reorder the diagonal entries by conjugation of unitary operators obtained after permutation of the corresponding rows and columns of the identity matrix in the $E$ basis. After this we can suppose that $\{(T_n,L_n)\}_{n=1}^m$ is a finite basis of span$\left(\{(T_n,L_n)\}_{n\in\N}\right)$.
	Then, it is apparent that for $(A,C)\in \mathcal{B}_S^h\oplus \mathcal{B}_V^h$,  $(A,C)\perp \text{span}\left(\{(T_n,L_n)\}_{n\in\N}\right)$ if and only if $(X,Y)\perp \text{span}\left(\{(T_n,L_n)\}_{n=1}^n\right)$.
\end{proof}

\begin{remark}
	\label{rem Delta en las bases VsubS y VsubV}
	Observe that we can also describe $\Delta$ in terms of multiplication of matrices using the orthogonal basis $\mathcal{V}_S$ and $\mathcal{V}_V$ 
	$$
	\Delta(A,C)=\begin{pmatrix}
		[T_1]_{\mathcal{V}_S}& 		[L_1]_{\mathcal{V}_V}\\
		[T_2]_{\mathcal{V}_S}& 		[L_2]_{\mathcal{V}_V}\\
		\dots&\dots\\
		\vdots&\vdots
	\end{pmatrix}_{\infty\times (r^2+t^2)}
	\cdot
	\begin{pmatrix}
		[A]_{\mathcal{V}_S}\\
		- [C]_{\mathcal{V}_V}
	\end{pmatrix}_{ (r^2+t^2)\times 1},
	$$
	where we denoted with $[\ ]_{\mathcal{V}_S}$ and $[\ ]_{\mathcal{V}_V}$ the  coordinates of the corresponding hermitian operators in the basis $\mathcal{V}_S$ and $\mathcal{V}_V$ respectively (see Remark \ref{rem bon Wij + Ir Vij + Ps}).
\end{remark}
\begin{corollary}\label{coro Diags iguales sii perp finitos}
	Let $\{(T_n,L_n)\}_{n=1}^m$ be as in Proposition \ref{prop Delta con finitos} (see \eqref{def de Delta} and \eqref{def (Tn,Ln) n de 1 a m}). Then, for $A\in  B^h(S)$, $C\in B^h(V)$ 
	\begin{equation}
		\begin{split}
			\label{eq diags = con A,-C perp Tn,Ln finitos y diags finitas}
			\Diag(A)=\Diag(C)  &\Leftrightarrow (A,-C)\perp (T_n,L_n), \forall n=1,\dots,m\\
			&\Leftrightarrow A_{n,n}=C_{n,n}, \forall n=1,\dots,m .
		\end{split}
	\end{equation}
	
\end{corollary}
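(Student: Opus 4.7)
The plan is to obtain both equivalences in \eqref{eq diags = con A,-C perp Tn,Ln finitos y diags finitas} as a short consequence of Proposition \ref{prop Delta con finitos} combined with the defining identities \eqref{def Tn} and \eqref{def Ln}. No new construction is required; the work reduces to (i) quoting Proposition \ref{prop Delta con finitos} for the first equivalence and (ii) computing a single inner product for the second.

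For the first equivalence, I would simply invoke the conclusion \eqref{def (Tn,Ln) n de 1 a m} of Proposition \ref{prop Delta con finitos}, which already reads $\Diag(A)=\Diag(C)\Leftrightarrow (A,-C)\perp(T_n,L_n)$ for every $n=1,\dots,m$. Nothing further is needed on this side.

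For the second equivalence, I would equip $\mathcal{B}_S^h\oplus \mathcal{B}_V^h$ with the direct sum of trace inner products on each factor and compute
$$
\langle (A,-C),(T_n,L_n)\rangle_{tr}=\tr(A T_n)-\tr(C L_n).
$$
By the defining property \eqref{def Tn}, the $n$-th diagonal entry of $\phi(A)=\Diag(A)$ equals $\tr(AT_n)$, so $\tr(AT_n)=A_{n,n}$; likewise $\tr(CL_n)=C_{n,n}$ by \eqref{def Ln}. Hence the orthogonality condition $(A,-C)\perp(T_n,L_n)$ is nothing but $A_{n,n}-C_{n,n}=0$, and imposing this for each $n=1,\dots,m$ yields exactly the second equivalence in \eqref{eq diags = con A,-C perp Tn,Ln finitos y diags finitas}.

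The only point worth flagging, more a bookkeeping remark than a genuine obstacle, is why truncating to the finite family $\{(T_n,L_n)\}_{n=1}^m$ suffices to force agreement of the entire diagonals. This is guaranteed by Proposition \ref{prop Delta con finitos}: after the reordering of $E$ performed there, the first $m$ pairs already span $\textrm{span}\{(T_n,L_n)\}_{n\in\N}$ inside the finite-dimensional space $\mathcal{B}_S^h\oplus \mathcal{B}_V^h$, so orthogonality against the finite family is equivalent to orthogonality against the whole sequence, and hence to $\Diag(A)=\Diag(C)$. The corollary is therefore a short deduction rather than an independent argument.
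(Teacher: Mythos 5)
Your argument is correct and follows essentially the same route as the paper: both reduce the statement to the identity $\langle (A,-C),(T_n,L_n)\rangle_{tr}=\tr(AT_n)-\tr(CL_n)=A_{n,n}-C_{n,n}$ coming from \eqref{def Tn} and \eqref{def Ln}, and both appeal to Proposition \ref{prop Delta con finitos} to pass from the full sequence $\{(T_n,L_n)\}_{n\in\N}$ to the finite family $\{(T_n,L_n)\}_{n=1}^m$. Nothing is missing.
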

\begin{proof}
	This follows after observing that if $(A,-C)\perp (T_n,L_n)$ then $0=\tr(A T_n)+\tr(-C L_n)=A_{n,n}-C_{n,n}$ (see \eqref{def Tn}, \eqref{def Ln}, \eqref{def de Delta}). Hence $\Diag(A)=\Diag(C)$ if and only if $(A,-C)\perp (T_n,L_n)$ for all $n\in\N$ which in term is equivalent to $(A,-C)\perp (T_n,L_n)$ for $n=1,\dots, m$ after using Proposition \ref{prop Delta con finitos}.
\end{proof}

\begin{corollary}
	Let $\{(T_n,L_n)\}_{n=1}^m$ be as in Proposition \ref{prop Delta con finitos} (see \eqref{def de Delta} and \eqref{def (Tn,Ln) n de 1 a m}). The following statements are equivalent
	\begin{enumerate}
		\item [a) ] $\dim \left(\text{span}\left(\{(T_n,L_n)\}_{n=1}^m\right)\right)<\dim\left( \mathcal{B}_S^h\oplus \mathcal{B}_V^h\right)$
		
		\item[b) ] $\exists$ a not null pair $(A,C)\in  \mathcal{B}_S^h\oplus \mathcal{B}_V^h$ such that $\Diag(A)=\Diag(C) $.
	\end{enumerate}
\end{corollary}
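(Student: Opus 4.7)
The plan is to reduce the equivalence to a straightforward linear-algebra fact about orthogonal complements in a finite-dimensional real inner product space, namely $\mathcal{B}_S^h\oplus \mathcal{B}_V^h$ endowed with the direct-sum trace inner product.

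First I would invoke Corollary \ref{coro Diags iguales sii perp finitos} to rewrite statement b). A pair $(A,C)\in \mathcal{B}_S^h\oplus \mathcal{B}_V^h$ satisfies $\Diag(A)=\Diag(C)$ if and only if $(A,-C)\perp (T_n,L_n)$ for every $n=1,\dots,m$, which in turn is equivalent to $(A,-C)$ belonging to $W^\perp$, where
$$
W=\text{span}\bigl(\{(T_n,L_n)\}_{n=1}^m\bigr)\subset \mathcal{B}_S^h\oplus \mathcal{B}_V^h.
$$
Since the map $\sigma:(A,C)\mapsto (A,-C)$ is a linear bijection of $\mathcal{B}_S^h\oplus \mathcal{B}_V^h$ onto itself that sends non-null pairs to non-null pairs, the existence of a non-null pair $(A,C)$ with $\Diag(A)=\Diag(C)$ is equivalent to $W^\perp\neq \{0\}$.

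Next I would use that $\mathcal{B}_S^h\oplus \mathcal{B}_V^h$ is finite-dimensional (of dimension $r^2+t^2$), so for the subspace $W$ we have $\dim(W)+\dim(W^\perp)=\dim(\mathcal{B}_S^h\oplus \mathcal{B}_V^h)$. Therefore $W^\perp\neq \{0\}$ if and only if $\dim(W)<\dim(\mathcal{B}_S^h\oplus \mathcal{B}_V^h)$, which is exactly statement a). Chaining the two equivalences yields a) $\Leftrightarrow$ b).

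There is no genuine obstacle here; the entire content has been packaged into Proposition \ref{prop Delta con finitos} and Corollary \ref{coro Diags iguales sii perp finitos}, so the only care needed is to check that $\sigma$ indeed preserves the property ``non-null'' (immediate, since it is a bijection fixing the zero pair) and that the inner product on the direct sum makes perpendicularity to each $(T_n,L_n)$ equivalent to perpendicularity to their span (true by linearity). The matrix description of $\Delta$ in Remark \ref{rem Delta en las bases VsubS y VsubV} can also be invoked as an alternative: statement a) is exactly the condition that the $\infty\times (r^2+t^2)$ coefficient matrix (which by the proof of Proposition \ref{prop Delta con finitos} has rank equal to $\dim W$) does not have full column rank, which is the standard criterion for the existence of a non-trivial kernel vector.
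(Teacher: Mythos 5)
Your proof is correct and follows essentially the same route as the paper's: identify the non-null pairs with $\Diag(A)=\Diag(C)$ with the non-zero elements of the orthogonal complement of $\operatorname{span}\{(T_n,L_n)\}_{n=1}^m$ (via Proposition \ref{prop Delta con finitos} and Corollary \ref{coro Diags iguales sii perp finitos}) and then apply the dimension formula in the finite-dimensional space $\mathcal{B}_S^h\oplus \mathcal{B}_V^h$. Your explicit handling of the sign-flip bijection $(A,C)\mapsto(A,-C)$ is in fact slightly more careful than the paper's terse argument, which elides this point.
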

\begin{proof} Recall that $\{(T_n,L_n)\}_{n=1}^m$ is a basis of $\ker(\Delta)^\perp=\{(A,C)\in \mathcal{B}_S^h\oplus \mathcal{B}_V^h: \Diag(A)=\Diag(C)\}$ (see \eqref{def de Delta}). Then note that the condition $m=\dim \left(\text{span}\left(\{(T_n,L_n)\}_{n=1}^m\right)\right)<\dim\left( \mathcal{B}_S^h\oplus \mathcal{B}_V^h\right)=r^2+t^2$ is equivalent to the existence of a not null hermitian $(A,C)\in \mathcal{B}_S^h\oplus \mathcal{B}_V^h$ where $A$ and $C$ share the same diagonal. The implication b) $\Rightarrow$ a) follows similarly. 
\end{proof}


Now we can state the following result. 
\begin{proposition}\label{prop varias diags finitas} With the notations of the previous paragraphs of this section the following statements are equivalent
	\begin{enumerate}
		\item \label{prop1finitosOp} $\exists$ a not null $ (X,Y)\in \mathcal{B}_S^+\oplus \mathcal{B}_V^+$ such that $\tr(X)=\tr(Y)=1$ and $(X,Y)\in \ker(\Delta)$ (for $\Delta$ as in \eqref{def de Delta}).
		\item \label{prop2finitosOp}  $\exists$ a not null $ (X,Y)\in \mathcal{B}_S^+\oplus \mathcal{B}_V^+$ such that $\tr(X)=\tr(Y)=1$ and $(X,Y)\perp \{(T_n,L_n)\}_{n=1}^m$, where span$\{(T_n,L_n)\}_{n=1}^m=\ker(\Delta)^\perp$ (with $(T_n,L_n)$ as in Proposition \ref{prop Delta con finitos}).
		\item  \label{prop3finitosOp} $\exists$ a not null $ (X,Y)\in \mathcal{B}_S^+\oplus \mathcal{B}_V^+$  such that $\tr(X)=\tr(Y)=1$ and $(X,Y)\perp \{(T_n,L_n)\}_{n\in \N}$ (see \eqref{def Tn}, \eqref{def Ln}).
		\item  \label{prop4finitosOp} $\exists$ a not null $ (X,Y)\in \mathcal{B}_S^+\oplus \mathcal{B}_V^+$ such that $\tr(X)=\tr(Y)=1$ and $ X_{n,n}=Y_{n,n}$ ($n,n$ diagonal entries in the basis $E$), for $n=1,\dots ,m=\dim\left(\ker(\Delta)^\perp\right)$.
		\item  \label{prop5finitosOp} $\exists$ a not null $ (X,Y)\in \mathcal{B}_S^+\oplus \mathcal{B}_V^+$ such that $\tr(X)=\tr(Y)=1$ and $\Diag(X)=\Diag(Y)$
		\item  \label{prop6finitosOp} $m_S\cap m_V\neq \emptyset$
		\item \label{prop7finitosOp}  $W(P_SE_1P_S,\dots, P_SE_iP_S,\dots)\cap W(P_V E_1 P_V,\dots,P_V E_j P_V,\dots)\neq \{0\}$
		\item  \label{equiv jnr en matrices} $W(P_SE_1P_S,\dots, P_SE_mP_S)\cap W(P_V E_1 P_V,\dots,P_V E_m P_V)\neq \{0\}$
	\end{enumerate}
\end{proposition}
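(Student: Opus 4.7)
The plan is to establish the cycle $(1){\Leftrightarrow}(5){\Leftrightarrow}(6){\Leftrightarrow}(7){\Leftrightarrow}(8)$, with $(1){\Leftrightarrow}(2){\Leftrightarrow}(3){\Leftrightarrow}(4)$ obtained as direct reformulations of (1) via Proposition \ref{prop Delta con finitos} and Corollary \ref{coro Diags iguales sii perp finitos}. First I would observe that by definition \eqref{def de Delta}, $\Delta(X,Y)=\Diag(X)-\Diag(Y)$, so (1) and (5) say literally the same thing. Proposition \ref{prop Delta con finitos} then rewrites $\ker(\Delta)$ as the orthogonal complement of $\mathrm{span}\{(T_n,L_n)\}$, first for $n\in\N$ (producing (3)) and then finitely for $n=1,\ldots,m$ (producing (2)); the sign convention of \eqref{def (Tn,Ln) n de 1 a m} must be respected, namely $(X,-Y)\perp(T_n,L_n)$, because $S\perp V$ forces $\tr(XL_n)=\tr(YT_n)=0$. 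Corollary \ref{coro Diags iguales sii perp finitos} yields (4) as the pointwise translation of (2).

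Next, for $(5){\Leftrightarrow}(6)$ I would use that $X\in\mathcal{B}_S^+$ with $\tr(X)=1$ is precisely $X\in\mathcal{D}_S$, so $\Diag(X)\in m_S$ by item (a) of Proposition \ref{prop: equivalencias de momento}; the symmetric assertion for $Y$ makes (5) equivalent to exhibiting a common element of $m_S\cap m_V$. For $(6){\Leftrightarrow}(7)$ I would invoke Proposition \ref{coro: W(ASE) en funcion de mS}: each joint numerical range is the cone $\{tx:0\leq t\leq 1,\,x\in m_S\}$ over its moment, so any non-zero element in both cones normalizes (using $\sum x_i=1$ for moment elements) to an actual common element of $m_S$ and $m_V$, and conversely every point of $m_S\cap m_V$ already lies in both full ranges.

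The delicate step is $(7){\Leftrightarrow}(8)$. The forward direction $(7){\Rightarrow}(8)$ is immediate by truncation, provided the reordering of $E$ from Proposition \ref{prop Delta con finitos} places enough support among the first $m$ indices to preserve non-triviality. For $(8){\Rightarrow}(7)$, given unit-trace positive $\rho_S,\rho_V$ with $\tr(\rho_S P_SE_iP_S)=\tr(\rho_V P_VE_iP_V)$ for $i=1,\ldots,m$, I would set $A=P_S\rho_SP_S\in\mathcal{B}_S^+$ and $C=P_V\rho_VP_V\in\mathcal{B}_V^+$; then $A_{ii}=C_{ii}$ for $i\leq m$, and Corollary \ref{coro Diags iguales sii perp finitos} promotes this to $\Diag(A)=\Diag(C)$, so the two infinite sequences coincide on every coordinate, yielding a non-zero common element of the full joint numerical ranges $W(\mathbf{A_{S,E}})$ and $W(\mathbf{A_{V,E}})$.

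The main obstacle is precisely this finite-to-infinite promotion. It rests on the finite dimensionality $\dim(\mathcal{B}_S^h\oplus\mathcal{B}_V^h)=r^2+t^2$ together with the choice of basis reordering making the first $m$ coordinate functionals span the space of diagonal evaluations on the direct sum; verifying that a single reordering works \emph{simultaneously} for both $S$ and $V$, and that non-triviality of a common JNR element survives truncation to $m$ coordinates, are the points requiring most care. Once this is in place, everything else is bookkeeping from the preceding propositions.
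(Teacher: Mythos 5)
Your proposal follows essentially the same route as the paper: the first five items via Proposition \ref{prop Delta con finitos} and Corollary \ref{coro Diags iguales sii perp finitos}, items (6)--(7) via the moment and cone descriptions of Propositions \ref{prop: equivalencias de momento} and \ref{coro: W(ASE) en funcion de mS}, and (8) handled by truncation in one direction and by promoting $m$ equal diagonal entries to equal full diagonals in the other. The one point you flag as delicate---that non-triviality survives truncation to the first $m$ coordinates---is settled by Corollary \ref{coro Diags iguales sii perp finitos} applied with $C=0$: if $X\in\mathcal{D}_S$ had $X_{n,n}=0$ for $n=1,\dots,m$, then $\Diag(X)=0$, contradicting $\tr(X)=1$.
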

\begin{proof}
	The equivalences of the first five items follow directly from the previous results Proposition \ref{prop Delta con finitos} and Corollary \ref{coro Diags iguales sii perp finitos}. The equivalences involving \eqref{prop6finitosOp} and \eqref{prop7finitosOp}  with the first four statements can be proved using Proposition \ref{prop: equivalencias de momento}. To prove that statement \eqref{prop4finitosOp} is equivalent to \eqref{equiv jnr en matrices}, use that \eqref{prop4finitosOp} implies \eqref{prop7finitosOp} and that \eqref{prop7finitosOp} apparently implies \eqref{equiv jnr en matrices}. The other implication can be obtained  observing that if \eqref{equiv jnr en matrices} holds then there exists $X\in B^+(S)$, $Y\in B^(V)$ with $\tr(X)=\tr(Y)=1$ such that $\tr(X P_S E_n P_S)=\tr(Y P_V E_n P_V)$, for $n=1,\dots, m$, which in turn implies that $\tr(XE_n)=\tr(YE_n)$ and hence $X_{n,n}=Y_{n,n}$ for $n=1,\dots,m$ (which is \eqref{prop4finitosOp}.).
\end{proof}

\subsection{Minimal matrices, moment of subspaces and joint numerical ranges in terms of finite matrices}

As before, we will consider two orthogonal finite dimensional subspaces $S$ with dim$(S)=r$ and $V$ with dim$(V)=t$ of $H$. We want to study relations between their moment sets and joint numerical ranges to similar sets but on the ambient of $M_r(\C)$ and $M_t(\C)$.
For that purpose consider the map
\begin{equation}
	\label{def map Z}
	Z:M_r(\C)\times M_t(\C)\to \mathcal{B}_S\oplus \mathcal{B}_V, \text{ such that } 
	Z(M,N)=U_S(M)+ U_V(N)
\end{equation}
where $U_S$ and $U_V$ are the applications defined in Proposition \ref{props unitario U} for the respective subspaces $S$ and $V$. Here we are considering on $M_r(\C)\times M_t(\C)$ the usual scalar product $\langle(M,N),(X,Y)\rangle=\tr(M X^*)+\tr(NY^*)$. Observe that $Z$ is invertible with $Z^{-1}(C,D)=(U_S^{-1}(C),U_V^{-1}(D))$. Also note that using the properties of $U_S$ and $U_V$ (see Proposition \ref{props unitario U}) the map $Z$ is an isometric isomorphism that preserves trace, inner products and positive definiteness in each entry (among many other properties).

Suppose that there exists $(M,N)\in M_r^+(\C)\times M_t^+(\C)$ 
such that 
$$
(M,N)\perp \{ (U_S^{-1}(T_n), U_V^{-1}(L_n)\}_{n=1}^m
$$
for $(T_n,L_n)$ as defined in \eqref{def (Tn,Ln) n de 1 a m} of Proposition \ref{prop Delta con finitos}. This holds if and only if $U_S(M)\in \mathcal{B}_S^+$ and $U_V(N)\in \mathcal{B}_V^+$ satisfy $(U_S(M),U_V(N))\perp (T_n,L_n)$ for $n=1,\dots, m$, which is equivalent to $\Diag(U_S(M))=\Diag(U_V(N))$ and to the fact that $m_S\cap m_V\neq \emptyset$ (see Proposition \ref{prop varias diags finitas}).
\begin{proposition}\label{prop union t psubmalfa igual a W y W}	Let $S$ be a subspace of $H$, $U_S$ defined as in Proposition \ref{props unitario U}, $m_S$ as in \eqref{def momento}, and $p_m:\ell^1(\R)\to\R^m$ the projection defined by $p_m\left(x_1,\dots,x_n,\dots\right)=(x_1,\dots,x_m)$. Then
	$$
	\bigcup_{\alpha \in [0,1]} \alpha \ p_m(m_S)= W\left(\{P_S E_j P_S\}_{j=1}^m\right)=W\left(\left\{U_S^{-1}(P_S E_j P_S)\right\}_{j=1}^m
	\right). 
	$$
	%
	%
\end{proposition}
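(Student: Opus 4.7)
The statement reduces to two independent equalities, each of which should follow from a previously established result applied to the first $m$ members of the sequence $\{P_SE_jP_S\}_{j=1}^\infty$.

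For the first equality, my plan is to observe that by definition
\[
W\bigl(\{P_SE_jP_S\}_{j=1}^m\bigr)=\bigl\{(\tr(\rho P_SE_1P_S),\dots,\tr(\rho P_SE_mP_S)):\rho\in\mathcal{D}\bigr\}=p_m\bigl(W(\mathbf{A_{S,E}})\bigr),
\]
so the finite-family joint numerical range is exactly the coordinate projection of the full one. Then I invoke Proposition \ref{coro: W(ASE) en funcion de mS}, which gives
\[
W(\mathbf{A_{S,E}})=\{t\,x:0\leq t\leq 1,\ x\in m_S\}=\bigcup_{t\in[0,1]} t\,m_S.
\]
Since $p_m:\ell^1(\R)\to\R^m$ is linear, it commutes with both the union and the scalar multiplication, so
\[
p_m\bigl(W(\mathbf{A_{S,E}})\bigr)=\bigcup_{\alpha\in[0,1]}\alpha\,p_m(m_S),
\]
which is the first equality.

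For the second equality I apply Corollary \ref{prop JNR escrito con matrices finitas} directly with $n=m$, which states that
\[
W(P_SE_1P_S,\dots,P_SE_mP_S)=W\bigl(U_S^{-1}(P_SE_1P_S),\dots,U_S^{-1}(P_SE_mP_S)\bigr).
\]
This rests on properties (\ref{prop1deU}), (\ref{prop4deU}), (\ref{prop5deU}) and (\ref{prop9deU}) of $U_S$ from Proposition \ref{props unitario U}: namely that $U_S$ is trace-preserving, multiplicative, positivity-preserving, and maps $\{R\in M_r^h(\C):R\geq 0,\ \tr(R)=1\}$ bijectively onto $\mathcal{D}_S$. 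In particular, the density matrices parametrizing one joint numerical range correspond bijectively (via $U_S^{-1}$ restricted to states supported on $S$, extended by zero outside $S$ in $\mathcal{D}$) to those parametrizing the other, and the traces $\tr(\rho P_SE_jP_S)=\tr(P_S\rho P_S E_j)$ agree under the correspondence.

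I do not expect a serious obstacle here, since both equalities are essentially bookkeeping on top of results already proved. The one point that needs a brief justification is that restricting to the first $m$ operators is the same as projecting onto the first $m$ coordinates of the joint numerical range; this is immediate from the definition of $W$ because the parametrizing density operator $\rho$ is the same in both cases. Once that observation is in place, the first equality is just the image of Proposition \ref{coro: W(ASE) en funcion de mS} under $p_m$, and the second equality is a quotation of Corollary \ref{prop JNR escrito con matrices finitas}.
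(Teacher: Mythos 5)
Your argument for the first equality is correct but takes a genuinely different route from the paper's. You observe that $W\bigl(\{P_SE_jP_S\}_{j=1}^m\bigr)=p_m\bigl(W(\mathbf{A_{S,E}})\bigr)$ because both sets are parametrized by the same density operators $\rho\in\mathcal{D}$, and then push the identity $W(\mathbf{A_{S,E}})=\{t\,x:0\leq t\leq 1,\ x\in m_S\}$ of Proposition \ref{coro: W(ASE) en funcion de mS} through the linear map $p_m$. The paper instead re-runs the two-inclusion argument directly for the truncated family: given $x=\alpha\,(\tr(\mu E_1),\dots,\tr(\mu E_m))$ with $\mu\in\mathcal{D}_S$ it exhibits the explicit witness $\rho=\alpha\mu+(1-\alpha)\frac{P_V}{\dim V}$ with $V\subset S^{\perp}$, and conversely normalizes $P_S\rho P_S$ by $\alpha=\tr(P_S\rho P_S)$. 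The content is the same; your version is shorter because it reuses work already done for the infinite family, while the paper's is self-contained and produces the witnesses explicitly.

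For the second equality you, like the paper, simply cite Corollary \ref{prop JNR escrito con matrices finitas}, so there is nothing to compare there. But the supplementary justification you attach to that citation is not right as stated, and the problem is visible from your own first half. The correspondence $\sigma\mapsto U_S(\sigma)$ identifies the $r\times r$ states only with $\mathcal{D}_S$, whereas $W\bigl(\{P_SE_jP_S\}_{j=1}^m\bigr)$ is parametrized by \emph{all} of $\mathcal{D}$; it is precisely the $\rho$ with $\tr(P_S\rho P_S)<1$ that generate the scalings $\alpha<1$ in $\bigcup_{\alpha}\alpha\,p_m(m_S)$. Taken literally, your bijection gives $W\bigl(\{U_S^{-1}(P_SE_jP_S)\}_{j=1}^m\bigr)=p_m(m_S)$, which is not obviously the same as the cone you obtained on the left (for instance $0$ need not belong to $p_m(m_S)$, while $\alpha=0$ always puts it in the cone). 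This tension originates in the corollary itself rather than in anything you introduced, but if the second equality is to be more than a quotation you must explain how the states of $\mathcal{D}$ not supported on $S$ are accounted for on the matrix side.
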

\begin{proof}
	The equality between the joint numerical range of operators $W\left(\{P_S E_j P_S\}_{j=1}^m\right)$ and the other $W\left(\left\{U_S^{-1}(P_S E_j P_S)\right\}_{j=1}^m
	\right)$ of matrices holds because $U_S^{-1}$ preserves joint numerical ranges (see Corollary \ref{prop JNR escrito con matrices finitas}).
	
	Now let $x \in  \cup_{\alpha \in [0,1]} \alpha \, p_m(m_S)$. Then $x=\alpha  (\tr(\mu E_1),\dots,\tr(\mu E_m))$, with $\alpha \in[0,1]$ and $\mu\in\DD_S$ (see \eqref{def DsubS} and \eqref{def momento}).
	Now consider $\rho=\alpha \, \mu+(1-\alpha ) \frac{P_V}{\dim V}$, for $V\subset S^\perp$ and $0<\dim(V)<+\infty$. Then it is apparent that $\tau(\rho)=1$, $\rho\geq 0$ and $\tr(P_S \rho P_S E_i)=\tr(P_S \alpha  \mu P_S E_i)=\alpha \, \tr(\mu E_i)$, for $i=1,\dots,m$. Hence $x=\alpha  (\tr(\mu E_1),\dots,\tr(\mu E_m))=(\tr(P_S \rho P_S E_1),\dots, \tr(P_S \rho P_S E_m)) \in  W\left(\{P_S E_j P_S\}_{j=1}^m\right)$. 
	
	To prove the other inclusion observe that the case when $x= (0,\dots,0)$ can be obtained with $\alpha =0$. So let us suppose $x\in W\left(\{P_S E_j P_S\}_{j=1}^m\right)$ and $x$ is not null. Then $x =(\tr(P_S \rho P_S E_1),\dots, \tr(P_S \rho P_S E_m))\in  W\left(\{P_S E_j P_S\}_{j=1}^m\right)$ with $\rho\in \mathcal{B}_1(H), \tr(\rho)=1, \rho\geq 0$. Since $P_S \rho P_S\geq 0$ and $x$ is not null, then $0<\tr({P_S \rho P_S})\leq 1$ in this case. We can define $\mu =\frac{P_S \rho P_S}{\tr({P_S \rho P_S})}\in\DD_S$ and then 
	$$
	x=\tr({P_S \rho P_S})\left(\frac{P_S \rho P_S}{\tr({P_S \rho P_S})} E_1,\dots,\frac{P_S \rho P_S}{\tr({P_S \rho P_S})} E_m\right)=\alpha  (\mu E_1,\dots,\mu E_m),
	$$
	for $\alpha =\tr({P_S \rho P_S})\in(0,1]$ and $\mu\in\DD_S$. This concludes the proof.
\end{proof}

\begin{theorem}
	\label{prop equivs mS cap mV no vacio}
	Let $S$ and $V$ be orthogonal subspaces of $H$, with dim$(S)=r$, dim$(V)=t$,  $\{(T_n,L_n)\}_{n=1}^m$ a basis of $\ker(\Delta)$ (see \eqref{def de Delta} and \eqref{def (Tn,Ln) n de 1 a m}), $U_S$, $U_V$ defined in \eqref{def map Z} and in Proposition \ref{props unitario U}, and the projection $p_m:\ell^1(\R)\to\R^m$ defined by $p_m\left(x_1,\dots,x_n,\dots\right)=(x_1,\dots,x_m)$.
	
	 Then the following statements are equivalent
	\begin{enumerate}
		\item[(1) ] $m_S\cap m_V\neq \emptyset$.
		\item[(2) ] $p_m(m_S)\cap p_m(m_V)\neq \emptyset$.
		\item[(3) ] $\exists (M,N)=(U_S^{-1}(X),U_V^{-1}(Y))\in M_r^+(\C)\times M_t^+(\C)$, for $X\in B^+(S), Y\in B^+(S)$ such that $X_{j,j}=Y_{j,j}$, for $j=1,\dots,m$.
		\item[(4) ]  $W(\{P_S E_j P_S\}_{j=1}^m\cap W(\{P_V E_j P_V\}_{j=1}^m\neq \{(0,\dots,0)\}$.
		\item[(5) ]
		\label{item intersecc JNR de finitas mat}  $W\left(\{U_S^{-1}(P_S E_j P_S)\}_{j=1}^m\right)\cap W\left(\{U_V^{-1}(P_V E_j P_V)\}_{j=1}^m\right)\neq \{(0,\dots,0)\}$.
		\item[(6) ] The pair of subspaces $(S,V)$ form a support (see Definition \ref{soporte def}).
		\item[(7) ] If $R\in  (\mathcal{B}_S^h\oplus \mathcal{B}_V^h)^\perp\cap K^h(H)$,  $\lambda\in\R_{>0}$ and $\|R\|\leq\lambda$ then the compact operator $\lambda (P_S-P_V)+R$ is minimal.
	\end{enumerate}
\end{theorem}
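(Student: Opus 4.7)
The plan is to bootstrap almost everything from Proposition \ref{prop varias diags finitas}, Corollary \ref{coro Diags iguales sii perp finitos}, and Proposition \ref{props unitario U}, and then to connect these with the support/minimality language.

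First, I would note that items (1), (4), and (5) are essentially items (6), (7), (8) of Proposition \ref{prop varias diags finitas}, combined with Corollary \ref{prop JNR escrito con matrices finitas} to pass between the joint numerical ranges of $\{P_SE_jP_S\}$ and those of $\{U_S^{-1}(P_SE_jP_S)\}$ (and similarly for $V$). Item (6) is just the definition: by Definition \ref{soporte def}, the pair $(S,V)$ forms a support precisely when $m_S\cap m_V\neq\emptyset$, so (1)$\Leftrightarrow$(6) is immediate.

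Next, to handle (2) and (3), I would go through $(4)$ of Proposition \ref{prop varias diags finitas}, which states the existence of $(X,Y)\in\mathcal{B}_S^+\oplus\mathcal{B}_V^+$ with $\tr(X)=\tr(Y)=1$ and $X_{n,n}=Y_{n,n}$ for $n=1,\dots,m$. For (2), observe that such coincidence of the first $m$ diagonal entries of two trace-one positive operators is exactly the statement that $p_m(\Diag(X))=p_m(\Diag(Y))$, i.e.\ $p_m(m_S)\cap p_m(m_V)\neq\emptyset$; conversely, thanks to Corollary \ref{coro Diags iguales sii perp finitos}, equality on the first $m$ diagonal entries forces full diagonal equality, producing an element of $m_S\cap m_V$. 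For (3), I would apply the isometric isomorphisms $U_S,U_V$ from Proposition \ref{props unitario U}: since $U_S$ and $U_V$ preserve positivity (property \eqref{prop5deU}) and trace (property \eqref{prop1deU}), setting $M=U_S^{-1}(X)$ and $N=U_V^{-1}(Y)$ translates (3) into the $(X,Y)$ formulation and back, the diagonal-entry conditions being intrinsic to $\mathcal{B}_S^h$ and $\mathcal{B}_V^h$ in the basis $E$.

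The genuinely new step is (1)$\Leftrightarrow$(7). For the forward direction, assume $m_S\cap m_V\neq\emptyset$. The hypothesis $R\in(\mathcal{B}_S^h\oplus\mathcal{B}_V^h)^\perp\cap K^h(H)$ means $P_SR=RP_S=P_VR=RP_V=0$, so $R$ is supported on $(S\oplus V)^\perp$ and the three terms $\lambda P_S$, $-\lambda P_V$, $R$ have mutually orthogonal supports. Together with $\|R\|\leq\lambda$ this gives $\|\lambda(P_S-P_V)+R\|=\lambda$, with $S$ contained in the eigenspace of $+\lambda$ and $V$ contained in the eigenspace of $-\lambda$. The Remark following Definition \ref{soporte def} (i.e.\ Corollary 10 of \cite{bottazzi-varela-minimal-compacts}) then yields that $\lambda(P_S-P_V)+R$ is minimal, since its $\pm\lambda$-eigenspaces contain a pair forming a support. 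The reverse direction, (7)$\Rightarrow$(1), can be obtained by specializing to $R=0$ (so $\lambda=1$, $\|R\|=0\leq 1$), giving minimality of $P_S-P_V$, and again invoking Corollary 10 of \cite{bottazzi-varela-minimal-compacts} to conclude $m_S\cap m_V\neq\emptyset$.

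The main obstacle is the direction (1)$\Rightarrow$(7): one must be careful that the eigenspaces of $\pm\lambda$ in the operator $\lambda(P_S-P_V)+R$ are precisely $S$ and $V$ (and not properly larger), which is why the orthogonality hypothesis $R\in(\mathcal{B}_S^h\oplus\mathcal{B}_V^h)^\perp$ and the norm bound $\|R\|\leq\lambda$ are both essential; otherwise the support structure required to invoke the characterization of minimal compact self-adjoint operators from \cite{bottazzi-varela-minimal-compacts} could fail. Everything else is a matter of carefully chaining the already-proved equivalences through the isometric isomorphisms $U_S$, $U_V$ and the projection $p_m$.
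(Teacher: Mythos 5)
Your proposal is essentially the paper's own proof: both arguments are a chain of the previously established equivalences, with (1)$\Leftrightarrow$(6) by Definition \ref{soporte def}, (4)$\Leftrightarrow$(5) by Corollary \ref{prop JNR escrito con matrices finitas}, and (1)$\Leftrightarrow$(7) delegated to Corollary 10 of \cite{bottazzi-varela-minimal-compacts}. The only routing difference is that the paper obtains (2)$\Leftrightarrow$(3) from Proposition \ref{prop union t psubmalfa igual a W y W} (the description of $W(\{P_SE_jP_S\}_{j=1}^m)$ as $\bigcup_{\alpha\in[0,1]}\alpha\,p_m(m_S)$), whereas you pass through item \eqref{prop4finitosOp} of Proposition \ref{prop varias diags finitas} and Corollary \ref{coro Diags iguales sii perp finitos}; both work, and yours has the small advantage of keeping the trace-one normalization explicit, which item (3) of the theorem leaves implicit.

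One point deserves care in your treatment of (1)$\Leftrightarrow$(7). You assert that $R\in(\mathcal{B}_S^h\oplus\mathcal{B}_V^h)^\perp$ \emph{means} $P_SR=RP_S=P_VR=RP_V=0$. If $\perp$ is read as the trace-inner-product orthocomplement, orthogonality of $R$ to $\mathcal{B}_S^h=P_SB^h(H)P_S$ only yields $P_SRP_S=0$ (and likewise $P_VRP_V=0$); it does not by itself force $\mathrm{ran}(R)\perp S,V$, and with only the weaker compressions vanishing the operator $\lambda(P_S-P_V)+R$ need not be minimal (a $2\times 2$ example with $R$ off-diagonal relative to $S\oplus V$ already fails). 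The hypothesis that actually feeds into Corollary 10 of \cite{bottazzi-varela-minimal-compacts} is the stronger condition $\mathrm{ran}(R)\perp S$ and $\mathrm{ran}(R)\perp V$, i.e.\ $R=P_{(S\oplus V)^\perp}RP_{(S\oplus V)^\perp}$, which is what the theorem intends; your subsequent argument (mutually orthogonal supports, $\|{\cdot}\|=\lambda$, eigenspaces of $\pm\lambda$ containing $S$ and $V$, monotonicity of moment sets under enlarging the eigenspaces) is then correct, as is the converse via $R=0$. So the leap is in the reading of the hypothesis rather than in the logic; you should either justify the passage from trace-orthogonality to the range condition or state the hypothesis in the range form before invoking the cited corollary.
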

\begin{proof} The equivalence between (1) and (2) is due to \eqref{eq diags = con A,-C perp Tn,Ln finitos y diags finitas} of Corollary \ref{coro Diags iguales sii perp finitos}. The definition of $p_m(m_S)$ and of $\{(T_n,L_n)\}_{n=1}^m$ jointly with Proposition \ref{prop union t psubmalfa igual a W y W} gives (2) $\Leftrightarrow$ (3). The equivalence (3) $\Leftrightarrow$ (4)  follows from  the definition of a joint numerical range and the fact that $U$ and $U^{-1}$ preserve positive definiteness. Corollary \ref{prop JNR escrito con matrices finitas} gives  (4) $\Leftrightarrow$ (5). Definition \ref{soporte def} is (1) $\Leftrightarrow$ (6) and (1) $\Leftrightarrow$ (7) can be found in Corollary 10 of \cite{bottazzi-varela-minimal-compacts} for example.
\end{proof}
\begin{remark}
	Note that the equivalence (5) of Theorem \ref{prop equivs mS cap mV no vacio} involves joint numerical ranges of $r\times r$ and $t\times t$ matrices. This allows the application of many techniques obtained for finite dimensional matrices studied and cited in \cite{kloboukvarela-mom-jnr} to describe them.
\end{remark}

\end{document}